\def\arXiv{1}  
\newcommand{\notarxiv}[1]{foo}
\newcommand{\arxiv}[1]{ba}
\renewcommand{\arxiv}[1]{#1}%
\renewcommand{\notarxiv}[1]{\ignorespaces}%
\renewcommand{\arxiv}[1]{\ignorespaces}%
\renewcommand{\notarxiv}[1]{#1}%
\newtheorem{theorem}{Theorem}
\newtheorem{lemma}{Lemma}
\newtheorem{corollary}[theorem]{Corollary}
\newtheorem{proposition}{Proposition}
\newtheorem{definition}{Definition}
\DeclarePairedDelimiter{\abs}{\lvert}{\rvert} %
\DeclarePairedDelimiter{\norm}{\|}{\|}
\DeclarePairedDelimiterXPP{\onenorm}[1]{}{\|}{\|}{_{1}}{#1}
\DeclarePairedDelimiterXPP{\twonorm}[1]{}{\|}{\|}{_{2}}{#1}
\DeclarePairedDelimiterXPP{\infnorm}[1]{}{\|}{\|}{_{\infty}}{#1}
\DeclarePairedDelimiterXPP{\pnorm}[1]{}{\|}{\|}{_{p}}{#1}
\DeclarePairedDelimiterXPP{\qnorm}[1]{}{\|}{\|}{_{q}}{#1}
\DeclarePairedDelimiterXPP{\opnorm}[1]{}{\|}{\|}{_{\mathrm{op}}}{#1}
\DeclarePairedDelimiterXPP{\dualnorm}[1]{}{\|}{\|}{_{*}}{#1}
\DeclarePairedDelimiterXPP{\gennorm}[2]{}{\|}{\|}{_{#1}}{#2}
\DeclarePairedDelimiterXPP{\inner}[2]{}{\langle}{\rangle}{}{#1,#2}
\renewcommand{\P}{\mathbb{P}} %
\DeclarePairedDelimiterXPP{\Pr}[1]{\P}{(}{)}{}{\activatebar#1}
\newcommand{\E}{\mathbb{E}} %
\DeclarePairedDelimiterXPP{\Ex}[1]{\E}{[}{]}{}{\activatebar#1}
\newcommand{\activatebar}{%
	\begingroup\lccode`~=`|
	\lowercase{\endgroup\def~}{\;\delimsize\vert\;}%
	\mathcode`|=\string"8000
}
\DeclarePairedDelimiterXPP{\O}[1]{O}{(}{)}{}{#1}
\DeclarePairedDelimiterXPP{\Otil}[1]{\widetilde{O}}{(}{)}{}{#1}
\DeclarePairedDelimiterXPP{\OMEGA}[1]{\Omega}{(}{)}{}{#1}
\DeclarePairedDelimiterXPP{\OMEGAtil}[1]{\widetilde{\Omega}}{(}{)}{}{#1}
\newcommand{\R}{\mathbb{R}}
\newcommand{\N}{\mathbb{N}}
\newcommand{\Var}{\mathrm{Var}}
\newcommand{\vdist}{P_v}
\newcommand{\simiid}{\stackrel{\rm iid}{\sim}}
\providecommand{\argmin}{\mathop{\rm argmin}}
\newcommand{\half}{\frac{1}{2}}
\renewcommand{\choose}[2]{\binom{#1}{#2}}
\newcommand{\defeq}{\coloneqq}
\newcommand{\eqdef}{\eqqcolon}
\newcommand{\fopt}{f^\star}
\newcommand{\grad}{\nabla}
\newcommand\Block[2]{%
	#1%
	\algocf@group{#2}%
}
\newcommand{\xopt}{x^\star}
\renewcommand{\grad}{\nabla}
\renewcommand{\half}{H_{\ge}}
\newcommand{\Otilde}{\tilde{O}}
\newcommand{\vect}[1]{\ensuremath{\mathbf{#1}}}
\renewcommand{\d}{\mathrm{d}}
\DeclarePairedDelimiterXPP{\inangle}[1]{}{\langle}{\rangle}{}{#1}
\DeclarePairedDelimiterXPP{\inbraces}[1]{}{\{}{\}}{}{#1}
\DeclarePairedDelimiterXPP{\inparen}[1]{}{(}{)}{}{#1}
\DeclarePairedDelimiterXPP{\insquare}[1]{}{[}{]}{}{#1}
\newcommand{\oracle}{\mathcal{O}}
\newcommand{\VSGO}{\oracle_V}
\newcommand{\ISGO}{\mathcal{O}_I}
\newcommand{\MAGO}{\tilde{g}}
\newcommand{\vol}{\mathrm{vol}}
\newcommand{\Rprime}{R'}
\newcommand{\sigmaV}{\sigma_V}
\newcommand{\sigmaI}{\sigma_I}
\newcommand{\sigmaE}{\sigma_E}
\newcommand{\Omegatilde}{\tilde{\Omega}}
\newcommand{\htilde}{\tilde{h}}
\newcommand{\eqn}[1]{(\ref{eqn:#1})}
\newcommand{\thm}[1]{\hyperref[thm:#1]{Theorem~\ref*{thm:#1}}}
\newcommand{\cor}[1]{\hyperref[cor:#1]{Corollary~\ref*{cor:#1}}}
\newcommand{\lem}[1]{\hyperref[lem:#1]{Lemma~\ref*{lem:#1}}}
\newcommand{\prop}[1]{\hyperref[prop:#1]{Proposition~\ref*{prop:#1}}}
\newcommand{\assum}[1]{\hyperref[assum:#1]{Assumption~\ref*{assum:#1}}}
\newcommand{\fig}[1]{\hyperref[fig:#1]{Figure~\ref*{fig:#1}}}
\newcommand{\tab}[1]{\hyperref[tab:#1]{Table~\ref*{tab:#1}}}
\newcommand{\algo}[1]{\hyperref[algo:#1]{Algorithm~\ref*{algo:#1}}}
\renewcommand{\sec}[1]{\hyperref[sec:#1]{Section~\ref*{sec:#1}}}
\newcommand{\append}[1]{\hyperref[append:#1]{Appendix~\ref*{append:#1}}}
\newcommand{\fac}[1]{\hyperref[fac:#1]{Fact~\ref*{fac:#1}}}
\newcommand{\lin}[1]{\hyperref[lin:#1]{Line~\ref*{lin:#1}}}
\newcommand{\utilde}{\tilde{u}}
\newcommand{\logs}{\mathrm{log}} %
\newcommand*\circled[1]{\tikz[baseline=(char.base)]{ %
    \node[shape=circle,draw,inner sep=0.5pt] (char) {#1};}}
\newcommand{\Ybar}{\overline{Y}}
\newcommand{\Zbar}{\overline{Z}}
\newcommand{\ls}{^{(\ell)}}
\newcommand{\event}{\mathcal{E}}
\newcommand{\linesearch}{\texttt{InexactLineSearch}}
\newcommand{\epsprim}{\epsilon'}
\newcommand{\zl}{z_\ell}
\newcommand{\zr}{z_r}
\newcommand{\zm}{z_m}
\newcommand{\zopt}{z^\star}
\newcommand{\xbar}{\bar{x}}
\newcommand{\zbar}{\bar{z}}
\newcommand{\lambdabar}{\bar{\lambda}}
\newcommand{\Tq}{\mathcal{T}}
\newcommand{\ESGO}{\oracle_E}
\newcommand{\exps}{\mathrm{exp}}
\newcommand{\sigmaB}{\sigma_B}
\title{Isotropic Noise in Stochastic and Quantum Convex Optimization} 
\renewcommand{\And}{~~}
\author{%
	Annie Marsden\thanks{Google Deepmind, \texttt{anniemarsden@google.com}} 
	\And
	Liam O'Carroll\thanks{Stanford University, \texttt{\string{ocarroll,sidford,chenyiz\string}@stanford.edu}} 
	\And
	Aaron Sidford\footnotemark[2] 
	\And
	Chenyi Zhang\footnotemark[2] 
}
\newtheorem{problem}{Problem}
\newtheorem{openproblem}{Open Problem}
\begin{document}

\maketitle

\begin{abstract}
  We consider the problem of minimizing a $d$-dimensional Lipschitz convex function using a stochastic gradient oracle. We introduce and motivate a setting where the noise of the stochastic gradient is \emph{isotropic} in that it is bounded in every direction with high probability. We then develop an algorithm for this setting which improves upon prior results by a factor of $d$ in certain regimes, and as a corollary, achieves a new state-of-the-art complexity for sub-exponential noise. We give matching lower bounds (up to polylogarithmic factors) for both results. Additionally, we develop an efficient \emph{quantum isotropifier}, a quantum algorithm which converts a variance-bounded quantum sampling oracle into one that outputs an unbiased estimate with isotropic error. Combining our results, we obtain improved dimension-dependent rates for quantum stochastic convex optimization.
\end{abstract}

\section{Introduction}
Stochastic convex optimization (SCO) \cite{robbins1951stochastic, nemirovski1994efficient, nemirovski2009robust, benveniste2012adaptive} is a foundational problem in optimization and learning theory, with numerous applications in theoretical computer science~\cite{bubeck2015convex}, operations research~\cite{powell2019unified}, machine learning \cite{bottou2018optimization}, and beyond. Algorithms for SCO, most notably stochastic gradient descent (SGD) and its many variants, are extensively studied and widely deployed in practice. In addition, there has been increasing interest recently in its quantum analog, \textit{quantum stochastic convex optimization}, and new provably efficient quantum algorithms with have been developed~\cite{sidford2024quantum, zhang2024quantumalgorithmslowerbounds}.

In one of its simplest forms which we focus on in this paper, SCO asks to compute an $\epsilon$-optimal point of a differentiable\footnote{We assume all objective functions are differentiable. Following a similar argument in prior works (e.g., \cite{bubeck2019complexity,sidford2024quantum}), our results extend to non-differentiable settings because convex functions are almost everywhere differentiable, and our algorithms are robust to polynomially small numerical errors.} convex $L$-Lipschitz function $f\colon\R^d \rightarrow \R$, minimized at an unknown point $\xopt$ with $\norm{\xopt} \leq R$ for the Euclidean norm $\norm{\cdot}$, given access to a bounded stochastic gradient oracle, which we define formally in Definition~\ref{def:SGO} and Definition~\ref{def:BSGO} below. Throughout, we assume that the randomness in calls to any oracle are independent of previous calls.

\begin{definition}[SGO]
\label{def:SGO}
$\oracle(\cdot)$ is a \emph{stochastic gradient oracle (SGO)} for differentiable $f \colon \R^d \rightarrow \R$ if when queried at $x \in \R^d$, it outputs $\oracle(x) \in \R^d$ such that $\E \mathcal{O}(x) = \nabla f(x)$.
\end{definition}

\begin{definition}[BSGO] 
\label{def:BSGO}
$\oracle_B (\cdot)$ is a \emph{$\sigma_B$-bounded SGO ($\sigma_B$-BSGO)} for differentiable $f\colon \R^d \rightarrow \R$ if it is an SGO for $f$ such that $\E \|\mathcal{O}_B(x)\|^2 \leq \sigma^2_B$ for any query $x \in \R^d$.
\end{definition}

It is well known that SGD, in particular, iterating $x_{t+ 1} \gets x_t - \eta \mathcal{O}_B(x_t)$ with an appropriate choice of step size $\eta$, 
solves SCO with $O(R^2 \sigma_B^2 / \epsilon^{2})$ queries, which is optimal even when $d = 1$ \cite{agarwal2009information}. In this paper, we seek to bypass this fundamental limit by studying more fine-grained models of the stochastic gradient. In particular, we study the following question:

\begingroup
\vbox{ %
	\begin{displayquote}
        \centering
        \textit{Can we identify shape-dependent assumptions on the noise of the stochastic gradient which allow us to obtain new state-of-the-art algorithmic guarantees?}
	\end{displayquote}
}
\endgroup

One of the most natural shape-dependent assumptions is to assume a bound on the variance of the SGO, which we capture in the following definition:

\begin{definition}[VSGO]
    \label{def:VSGO}
    $\oracle_V (\cdot)$ is a \emph{$\sigma_V$-variance-bounded SGO ($\sigma_V$-VSGO)} for differentiable $f : \R^d \to \R$ if it is an SGO for $f$ such that $\E \|\mathcal{O}_V(x) - \nabla f(x)\|^2 \leq \sigma^2_V$ for any query $x \in \R^d$.
\end{definition}

 In the setting we consider where $f$ is $L$-Lipschitz (see Problem~\ref{prob:SCO} for a formal definition), SGD trivially achieves a rate of $O(R^2 (L^2 + \sigmaV^2) / \epsilon^2) = O(R^2 \sigmaV^2 / \epsilon^2 + R^2 L^2 / \epsilon^2)$ under a $\sigmaV$-VSGO due to the observation that a $\sigmaV$-VSGO is a $O (L + \sigmaV)$-BSGO. 
 Inspecting the above rate, the $R^2 \sigmaV^2 / \epsilon^2$ term is unimprovable even when $d = 1$ \cite{agarwal2009information}.\footnote{This follows from the same lower bound construction which shows that $\Omega(R^2 \sigmaB^2 / \epsilon^2)$ queries are necessary for SCO under a $\sigmaB$-BSGO. Indeed, in that construction (e.g., Section 5 in \cite{duchi2018introductory}), it is the case that $\sigmaV^2 = \Theta(\sigmaB^2)$.}
However, the same is not true for the $R^2 L^2 / \epsilon^2$ term, which stems from the non-stochastic component of the problem. Indeed, when $\sigmaV = 0$, in which case the SGO is a gradient oracle, the $O(R^2 L^2 / \epsilon^2)$ rate (which is achieved by gradient descent \cite{nesterov2018convexopttextbook}) is optimal only for $\epsilon \ge \Omega(RL / \sqrt{d})$ \cite{garg2020no}; cutting plane methods which achieve $\Otilde(d)$ rates are better at smaller target precisions, where we use $\Otilde(\cdot)$ throughout the paper to hide polylogarithmic factors in $1 / \epsilon$, $\log(1/\delta)$, $d$, $R$, and $L$.

This leaves the door open for improved \emph{dimension-dependent} rates for SCO under a VSGO, which have not been explicitly studied to the best of our knowledge. In particular, in light of the fact that a $O(R^2 \sigmaV^2 / \epsilon^2 + R^2 L^2 / \epsilon^2)$ rate is achievable by SGD and it is possible to achieve $\min \inbraces{O(R^2 L^2 / \epsilon^2), \Otilde(d)}$ when $\sigmaV = 0$, this begs the natural open problem:

\begin{openproblem}
    \label{openprob:conjectured-rate}
    Is it possible to solve SCO with $\Otilde(R^2 \sigmaV^2 / \epsilon^2 + d)$ queries to a $\sigmaV$-VSGO?
\end{openproblem}

While we do not solve this open problem, we nonetheless make substantial progress in characterizing the complexity of SCO under more fine-grained shape-dependent assumptions on the SGO. Our main conceptual contribution is to answer this open problem in the affirmative under a stronger noise model which we introduce in the next section, termed \emph{isotropic noise}. 
 We provide a dimension-dependent algorithm in this setting which achieves the conjectured $\Otilde(R^2 \sigma^2 / \epsilon^2 + d)$ rate (albeit $\sigma^2$ is not the variance but a different parameter associated with our noise model). We then instantiate this result to achieve the target $\Otilde(R^2 \sigma^2 / \epsilon^2 + d)$ rate for sub-exponential noise, a widely studied class of distributions in machine learning and statistics, as well as a $\Otilde(d R^2 \sigmaV^2 / \epsilon^2 + d)$ rate for SCO under a VSGO,\footnote{We emphasize that although they study a different setting, this rate can be recovered by modifying the techniques of \cite{sidford2024quantum}; see Sections \ref{subsec:results} and \ref{sec:technical-overview} for further discussion.} which is worse than the conjectured rate 
by a $d$-factor. We then give lower bounds for isotropic noise and sub-exponential noise which show that in any parameter regime, applying the better of our algorithm and SGD is optimal (up to polylogarithmic factors).

Finally, we leverage our algorithm to obtain a new state-of-the-art guarantee for quantum SCO under a quantum analog of the VSGO oracle. In what may be of independent interest, our quantum result relies on a quantum subroutine which we call a \emph{quantum isotropifier}, which converts a quantum analog of the VSGO oracle into one which outputs an unbiased estimate of the gradient with isotropic noise. In other words, this subroutine allows us to apply our improved guarantees for isotropic stochastic gradients to an even broader class of noise models in the quantum setting.

In the remainder of the introduction, we define our noise model and present our results in more detail (Section~\ref{subsec:results}), discuss additional related work (Section~\ref{subsec:intro-related-work}),
define notation (Section~\ref{subsec:general-notations}), and give a roadmap of the rest of the paper (Section~\ref{subsec:paper-organization}).

\subsection{Results}
\label{subsec:results}

\paragraph{Isotropic and sub-exponential SGOs.}

As our main conceptual contribution, we define a new noise model for SCO which captures the impact of the \textit{shape} of the noise, as opposed to only its moments. In particular, we consider an \textit{isotropic noise model}, formalized in the following definition, where the magnitude of the noise in any fixed direction is bounded with probability at least $1 - \delta$.

\begin{definition}[ISGO]
    \label{def:ISGO}
    $\ISGO(\cdot)$ is a \emph{$(\sigmaI, \delta)$-isotropic SGO (($\sigmaI, \delta$)-ISGO)} for differentiable $f : \R^d \to \R$ if it is an SGO for $f$ such that
    \begin{align}
        \label{eq:ISGO-marginal-bound}
        \P \insquare*{ \abs{ \inangle*{\ISGO(x) - \grad f(x), u} } \geq \sigmaI / \sqrt{d} } \le \delta ~~\text{for any $x, u \in \R^d$ s.t. $\norm{u} = 1$}
        \,.
    \end{align}
\end{definition}

The $1 / \sqrt{d}$ scaling factor in Eq.~\ref{eq:ISGO-marginal-bound} is included to ensure $\sigmaI$ in Definition~\ref{def:ISGO} and $\sigmaV$ in Definition~\ref{def:VSGO} are comparable in scale. For example, a $(\sigma, 0)$-ISGO $\ISGO(\cdot)$ is also a $\sigma$-VSGO, since with $e_i$ as the $i$-th standard basis vector:
\[
    \E \norm{\ISGO(x) - \grad f(x)}^2 = \E \sum_{i \in [d]} \inangle*{e_i, \ISGO(x) - \grad f(x)}^2 = \sum_{i \in [d]} \E \inangle*{e_i, \ISGO(x) - \grad f(x)}^2 \le  \sigma^2.
\]
Informally, an ISGO can be thought of as a VSGO with the stronger assumption that the noise is bounded evenly in all directions, thereby imposing additional structure on its \emph{shape.} Formally, we show later in Lemma~\ref{lem:implement-ISGO-with-VSGO} that a $(6 \sigmaV \sqrt{d}, \delta)$-ISGO query can be implemented with logarithmically many queries to a $\sigmaV$-VSGO, thereby allowing us to translate algorithmic guarantees for an ISGO to a VSGO at the cost of a $\sqrt{d}$-factor.

ISGOs also naturally capture a variety of light-tailed distributions, for which we use sub-exponential distributions as a representative example throughout this paper due to their widespread prevalence in machine learning and statistics. In particular, we consider the following ESGO:

\begin{definition}[ESGO]
	\label{def:ESGO}
$\ESGO(\cdot)$ is a \emph{$\sigmaE$-sub-exponential SGO ($\sigmaE$-ESGO)} for differentiable $f : \R^d \to \R$ if it is an SGO for $f$ such that for any $x, u \in \R^d$ such that $\norm{u} = 1$:
\begin{align}
	\label{eq:ESGO-tail-bound}
	\P \insquare*{\abs*{\inangle{\ESGO(x) - \grad f(x), u}} \ge t} \le 2 \cdot \exps (-t \sqrt{d} / \sigmaE)
	 ~~\text{for all $t \ge 0$}.
\end{align}
\end{definition}

The $\sqrt{d}$ factor in Definition~\ref{def:ESGO} is present for the same reason it is present Definition~\ref{def:ISGO}, and we show for all $\delta \in (0, 1)$ that any $\sigma$-ESGO is a $(\sigma \logs(2 / \delta), \delta)$-ISGO (see Lemma~\ref{lem:ESGO-to-ISGO}). We collect and review basic facts about sub-exponential distributions, as well as provide some additional discussion about the relationships between the various SGO oracles we define, in Appendix~\ref{sec:subexpdiscussion}. In particular, we show that a $\sigmaE$-ESGO is a $C \sigmaE$-VSGO for some absolute constant $C$, and thus Definition~\ref{def:ESGO} can also be interpreted as imposing structure on the shape of the noise beyond a variance bound.

\paragraph{The complexity of SCO with ISGOs, ESGOs, and VSGOs.}

We formalize the general setting we consider in Problem~\ref{prob:SCO} and then present our results, including upper and lower bounds.

\begin{problem}[SCO]
    \label{prob:SCO}
    Given access to an SGO $\oracle(\cdot)$ for a convex and $L$-Lipschitz (with respect to the $\ell_2$-norm) function $f \colon \R^d \to \R$ such that there exists $\xopt \in \argmin_{x \in \R^d} f(x)$ with $\norm{\xopt} \le R$, the goal is to output an $\epsilon$-optimal point.
\end{problem}

In Theorem~\ref{thm:ISGO-upper-bound} and Corollary~\ref{cor:SCO-lower} (proven in Section~\ref{sec:nice-noise} and Section~\ref{sec:lower} respectively), we achieve upper and lower bounds for Problem~\ref{prob:SCO} given access to a $(\sigmaI, \delta)$-ISGO. We obtain the upper bound Theorem~\ref{thm:ISGO-upper-bound} via a \emph{stochastic cutting plane method} by extending the techniques of \cite{sidford2024quantum}, and the lower bound Corollary~\ref{cor:SCO-lower} by reducing a mean estimation problem to Problem~\ref{prob:SCO}; see Section~\ref{sec:technical-overview} for a technical overview. In Corollary~\ref{cor:SCO-lower}, we use $\Omegatilde(\cdot)$ to hide logarithmic factors in $d$.

\begin{restatable}{theorem}{restateThmISGOMainGuarantee}\label{thm:ISGO-upper-bound}
    Problem~\ref{prob:SCO} can be solved with probability at least $2/3$ in $\Otilde \inparen*{R^2 \sigmaI^2 / \epsilon^{2} + d}$ queries to a $(\sigmaI,\delta)$-ISGO 
     for any $\delta \le \frac{1}{M d (R^2 \sigmaI^2 / \epsilon^{2} + d)}$, where $M = \Otilde(1)$.
\end{restatable}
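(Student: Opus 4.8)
The plan is to solve Problem~\ref{prob:SCO} with a \emph{stochastic cutting-plane method}, extending the approach of \cite{sidford2024quantum}. I maintain a localization region $K$ (an ellipsoid, or a polytope kept well-rounded by a standard cutting-plane scheme with $\Otilde(d)$ iteration complexity) that is always guaranteed to contain an $O(\epsilon)$-optimal point of $f$, and I shrink $K$ over the $\Otilde(d)$ iterations needed to bring its diameter from $R$ down to $\OMEGA{\epsilon/L}$; after normalizing $f$ so that $L=1$, I organize these into $O(\log(R/\epsilon))$ \emph{scales} $s=0,1,2,\dots$, where during scale $s$ the region has diameter $\Theta(R_s)$ for $R_s \defeq R/2^s$. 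A single iteration at the center $x_c$ of $K$ queries the $(\sigmaI,\delta)$-ISGO $k_s$ times, aggregates the responses into an approximate gradient $\widehat g$, and replaces $K$ by $K \cap \{x : \inner{\widehat g}{x-x_c} \le \beta_s\}$ for a margin $\beta_s = \Theta(\epsilon)$; the number of queries $k_s$ is chosen to make this cut both valid and deep.

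The first step is the validity analysis. For the (unique) point $y$ of $\{f \le \fopt + O(\epsilon)\} \cap K$ nearest to $x_c$ — a point determined by $f$ and by randomness independent of the current iteration — convexity gives $\inner{\grad f(x_c)}{y-x_c} \le f(y)-f(x_c)$, which is negative whenever $x_c$ is not itself $O(\epsilon)$-optimal; applying the ISGO bound~\eqref{eq:ISGO-marginal-bound} in the fixed direction $(y-x_c)/\norm{y-x_c}$ and concentrating over the $k_s$ aggregated responses then shows $\inner{\widehat g}{y-x_c} \le \beta_s$ provided $k_s$ is large enough that the directional error of $\widehat g$ is $O(\epsilon/\norm{y-x_c}) = O(\epsilon/R_s)$, i.e.\ $k_s = \max\{1,\ \Otilde(R_s^2\sigmaI^2/(d\epsilon^2))\}$. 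Hence the cut retains $y$ (and when $x_c$ happens to be $O(\epsilon)$-optimal it trivially retains $x_c$), so by induction $K$ always contains an $O(\epsilon)$-optimal point. Because $R_s = \OMEGA{\epsilon/L}$ at every non-terminal scale, $\beta_s = \Theta(\epsilon)$ is a small fraction of the region's extent, so each cut meets the cutting-plane method's depth requirement and removes a constant fraction of the volume; after $\Otilde(d)$ iterations the diameter is $O(\epsilon/L)$, at which point Lipschitzness makes every point of $K$ — in particular the final center, which I output — $O(\epsilon)$-optimal. Running with target $\epsilon$ up to a constant and polylogarithmic adjustment yields the $\epsilon$-optimality claimed.

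The query count follows by summing the per-scale costs: $\sum_s \Otilde(d)\cdot k_s = \Otilde(d) + \Otilde\bigl(\sum_s R_s^2\sigmaI^2/\epsilon^2\bigr) = \Otilde(d + R^2\sigmaI^2/\epsilon^2)$, since $\sum_s R_s^2 = \sum_s R^2 4^{-s}$ is geometric and dominated by the coarsest scale $s=0$. The success probability comes from a union bound over these $\Otilde(d + R^2\sigmaI^2/\epsilon^2)$ oracle calls (with a constant number of relevant directions per iteration), which is exactly why it suffices to take $\delta \le 1/(Md(R^2\sigmaI^2/\epsilon^2+d))$ with $M = \Otilde(1)$.

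I expect the main obstacle to be the aggregation step, and correspondingly the role of the tiny $\delta$: Definition~\ref{def:ISGO} controls only a single tail probability and permits extremely heavy tails, so the crux is showing that $k_s$ aggregated ISGO responses estimate $\inner{\grad f(x_c)}{y-x_c}$ to accuracy $O(\epsilon)$ with probability $1-\delta$. Taking $\delta$ polynomially small in all parameters is precisely what lets a union bound eliminate the rare large deviations across the whole run, so that the aggregate concentrates as though each response had error at most $\sigmaI/\sqrt d$ in the relevant direction. A secondary, more routine, technical point is choosing the underlying cutting-plane scheme so that $K$ stays well-rounded, ensuring the worst-case cut direction is never one in which $K$ is unusually thin and so that the $\Theta(\epsilon)$ margin truly guarantees a constant-factor volume decrease.
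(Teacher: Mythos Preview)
Your central insight—that the ISGO's directional guarantee lets you control the gradient estimate along the single direction $(y-x_c)/\nrm{y-x_c}$ at per-iteration cost $\Otilde(R_s^2\sigmaI^2/(d\epsilon^2))$ rather than paying for full $\ell_2$ control—is exactly right and is the same observation the paper formalizes via the MAGO (Definition~\ref{def:MAGO}, Lemma~\ref{lem:implement-MAGO-w-norm-bound}). The query arithmetic and the union-bound role of $\delta$ are also fine.

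The gap is in your termination argument. You claim the margin cut $K\cap\{x:\inangle{\hat g,x-x_c}\le\beta_s\}$ with $\beta_s=\Theta(\epsilon)$ ``removes a constant fraction of the volume'' because $\beta_s$ is small compared to $R_s$; but $\beta_s$ is in function-value units, and the cut hyperplane sits at \emph{distance} $\beta_s/\nrm{\hat g}$ from $x_c$. Nothing forces $\nrm{\hat g}=\Omega(\epsilon/R_s)$: when $\nrm{\nabla f(x_c)}$ is small (or, in the extreme, when the ISGO returns the exact gradient and $\nabla f(x_c)=0$), $\nrm{\hat g}$ can be arbitrarily small and the cut removes nothing, so the iteration count is not $\Otilde(d)$. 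Nor can you test for this and stop: your estimate satisfies only $\nrm{\hat g-\nabla f(x_c)}\le O(\epsilon\sqrt d/R_s)$ (the ISGO gives per-direction control, so the $\ell_2$ bound carries a $\sqrt d$), which is far larger than the $O(\epsilon/R)$ threshold needed to certify $x_c$ is $\epsilon$-optimal via $f(x_c)-\fopt\le\nrm{\nabla f(x_c)}\cdot 2R$. Tightening the $\ell_2$ error enough to make such a test work costs an extra factor of $d$ in samples and collapses to the VSGO rate. The paper sidesteps this by using \emph{margin-free} cuts $\{x:\inangle{\hat g,x-x_c}\le 0\}$ (so Gr\"unbaum guarantees volume reduction regardless of $\nrm{\hat g}$), at the price of losing your invariant: it only knows that \emph{some} iterate among the $\Otilde(d)$ centers is $\epsilon$-optimal (Lemma~\ref{lem:cutting-plane-with-ISGO}), and then runs a separate second stage—a binary tree of one-dimensional line searches, each again needing only directional control (Lemmas~\ref{lem:best-of-two} and~\ref{lem:find-best-among-finite-set})—to extract an $\epsilon$-optimal point from that candidate set at the same $\Otilde(R^2\sigmaI^2/\epsilon^2+d)$ cost. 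Your single-stage scheme omits this phase, and without it the argument does not close.
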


\begin{restatable}{corollary}{restateISGOLower}\label{cor:SCO-lower}
    Any algorithm which solves Problem~\ref{prob:SCO} with probability at least $2/3$ using a $(\sigmaI,\delta)$-ISGO makes at least $\tilde{\Omega}(R^2\sigma_I^2/(\epsilon^{2}\log^2(1/\delta))+\min\{R^2L^2 / \epsilon^2,d \})$ queries.
\end{restatable}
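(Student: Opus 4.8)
I would prove the two summands of the claimed bound separately and combine them using $a+b\le 2\max\{a,b\}$. The first summand, $\tilde{\Omega}(R^2\sigma_I^2/(\epsilon^2\log^2(1/\delta)))$, is the \emph{statistical} part and follows by reducing a one‑dimensional mean‑estimation (two‑point) problem to Problem~\ref{prob:SCO}. The second summand, $\tilde{\Omega}(\min\{R^2L^2/\epsilon^2,d\})$, is the \emph{deterministic} part and follows because an exact gradient oracle is a legal $(\sigma_I,\delta)$‑ISGO, so classical first‑order lower bounds for Lipschitz convex minimization transfer verbatim. Throughout I work in the nontrivial regime $\epsilon\lesssim RL$ (otherwise $x=0$ already solves Problem~\ref{prob:SCO} with zero queries).

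\textbf{Statistical term.}
Take $d=1$ and set $\mu\defeq\min\{L,2\epsilon/R\}=\Theta(\epsilon/R)$ (the last equality using $\epsilon\lesssim RL$). Consider the two instances $f_+(x)\defeq\mu\max\{x,-R\}$ and $f_-(x)\defeq\mu\max\{-x,-R\}$, each convex, $\mu$‑Lipschitz (hence $L$‑Lipschitz), each with a minimizer at distance exactly $R$ from the origin and optimal value $-\mu R$. A direct computation shows that every $\epsilon$‑optimal point of $f_+$ lies in $(-\infty,-R/2]$ and every $\epsilon$‑optimal point of $f_-$ lies in $[R/2,\infty)$, so any algorithm solving Problem~\ref{prob:SCO} with probability $2/3$ on a uniformly random choice of $f_\pm$ distinguishes the two with probability $2/3$. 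I would equip both instances with the SGO that adds independent mean‑zero sub‑exponential noise of scale $\tau\defeq c\,\sigma_I/\log(2/\delta)$ for a small absolute constant $c$: being centered it is a valid SGO, and the sub‑exponential tail bound gives $\P\insquare*{\,\abs{\oracle(x)-\grad f(x)}\ge\sigma_I\,}\le\delta$, so it satisfies Eq.~\ref{eq:ISGO-marginal-bound} and is a legal $(\sigma_I,\delta)$‑ISGO. At every query point the two instances' oracle laws differ only by a shift of their mean of magnitude at most $2\mu=O(\epsilon/R)$ — in particular a query outside $[-R,R]$ still carries a nonzero mean gap and wastes no information — so each response contributes $O((\epsilon/R)^2/\tau^2)$ to the KL divergence between the two induced adaptive interaction transcripts. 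A Le~Cam / divergence‑decomposition argument then forces the query count $n$ to satisfy $n\cdot O((\epsilon/R)^2/\tau^2)=\Omega(1)$, i.e.\ $n=\Omega(\tau^2R^2/\epsilon^2)=\Omega(R^2\sigma_I^2/(\epsilon^2\log^2(1/\delta)))$. (Alternatively one can avoid re‑deriving the mean‑estimation bound: by Lemma~\ref{lem:ESGO-to-ISGO} every $\sigma'$‑ESGO is a $(\sigma'\log(2/\delta),\delta)$‑ISGO, so an ISGO algorithm also solves Problem~\ref{prob:SCO} given a $(\sigma_I/\log(2/\delta))$‑ESGO, and the ESGO lower bound — realized by exactly the sub‑exponential oracle above — transfers.)

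\textbf{Deterministic term.}
The zero‑noise oracle $\oracle(x)=\grad f(x)$ trivially satisfies Eq.~\ref{eq:ISGO-marginal-bound} for every $\sigma_I>0$ and $\delta>0$, so any algorithm for Problem~\ref{prob:SCO} under a $(\sigma_I,\delta)$‑ISGO in particular solves it with an exact gradient oracle. I would then invoke the two classical first‑order lower bounds for $L$‑Lipschitz convex minimization with a minimizer of norm $\le R$: the Nemirovski–Yudin–type bound of $\Omega(R^2L^2/\epsilon^2)$, valid when $\epsilon\gtrsim RL/\sqrt d$ (cf.\ \cite{garg2020no}), and the $\tilde{\Omega}(d)$ bound valid for all smaller $\epsilon$. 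Since $R^2L^2/\epsilon^2=\Theta(d)$ at the crossover $\epsilon=\Theta(RL/\sqrt d)$, these combine into $\tilde{\Omega}(\min\{R^2L^2/\epsilon^2,d\})$, and taking the maximum with the statistical bound and hiding the $\log d$ factors in $\tilde{\Omega}(\cdot)$ yields the stated inequality.

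\textbf{Main obstacle.}
The delicate part is the statistical reduction: one must pick the noise scale $\tau=\Theta(\sigma_I/\log(1/\delta))$ large enough to force the claimed query count yet small enough that the oracle remains a legal $(\sigma_I,\delta)$‑ISGO, and one must execute the Le~Cam argument against an \emph{adaptive} algorithm whose queries may leave $[-R,R]$ — which is harmless precisely because a nonzero mean gap persists everywhere — while tracking the regime constraints ($\epsilon\lesssim RL$, and $\mu\lesssim\tau$, the latter automatic whenever the target bound exceeds a constant). The deterministic term is essentially a citation, once one checks that the exact gradient oracle meets Definition~\ref{def:ISGO}.
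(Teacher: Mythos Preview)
Your proposal is correct. In fact, the parenthetical ``alternative'' you mention --- transferring the ESGO lower bound via Lemma~\ref{lem:ESGO-to-ISGO} with $\sigmaE\leftarrow\sigmaI/\log(2/\delta)$ --- is precisely the paper's two-line proof of this corollary.

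Your primary route for the statistical term is genuinely different and more elementary. The paper's underlying ESGO bound (Theorem~\ref{thm:SCO-subexp-lower-bound}) is proved via a $d$-dimensional hypercube hard instance and a mutual-information argument (Lemma~\ref{lemma:sample_lb_mean_est}), whereas you go directly through a one-dimensional two-point Le~Cam reduction with Laplace-type noise. Both constructions produce a sub-exponential oracle that is automatically a legal $(\sigmaI,\delta)$-ISGO at the right scale, so the logical structure is the same; your version just avoids the Fano-style machinery and the separate mean-estimation-to-SCO reduction of Lemma~\ref{lemma:properties_of_lb_f}. The trade-off is that the paper's route yields Theorem~\ref{thm:SCO-subexp-lower-bound} as an independent result along the way, while yours is more self-contained for this specific corollary. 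For the deterministic summand both you and the paper invoke the same exact-gradient lower bound (Lemma~\ref{lem:non-stochastic-lower}, i.e.\ \cite{garg2020no}), after noting that the noiseless oracle trivially satisfies Definition~\ref{def:ISGO}.
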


In other words, Theorem~\ref{thm:ISGO-upper-bound} says that an $\Otilde \inparen{R^2 \sigmaI^2 / \epsilon^{2} + d}$ rate is achievable as long as $\delta$ is inverse-polynomially small, and Corollary~\ref{cor:SCO-lower} says this is tight up to polylog factors when the target precision is sufficiently small so that $d \le R^2 L^2 / \epsilon^2$ or equivalently $\epsilon \le RL / \sqrt{d}$. In particular, Theorem~\ref{thm:ISGO-upper-bound} demonstrates that shape-dependent assumptions on the noise can lead to speedups over the known rates for SGD. For example, for a $(\sigma, 0)$-ISGO (which we saw above is also a $\sigma$-VSGO), the $\Otilde \inparen*{R^2 \sigma^2 / \epsilon^{2} + d}$ rate of Theorem~\ref{thm:ISGO-upper-bound} improves upon the $O(R^2 \sigma^2 / \epsilon^2 + R^2 L^2 / \epsilon^2)$ rate of SGD when $L \gg \max \{\sigma, \epsilon \sqrt{d} / R\}$, which captures a variety of high precision, low variance regimes.

Next, using the fact discussed above that ISGO queries can be implemented via ESGO and VSGO oracles, we obtain the following upper bounds (proven in Section~\ref{sec:nice-noise}) as corollaries of Theorem~\ref{thm:ISGO-upper-bound}:

\begin{restatable}{corollary}{restateESGOUpper}\label{cor:sub-exp-upper-bound}
    Problem~\ref{prob:SCO} can be solved with probability at least $2/3$ in $\Otilde(R^2 \sigmaE^2 / \epsilon^2 + d)$ queries to a $\sigmaE$-ESGO.
\end{restatable}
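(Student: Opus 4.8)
The plan is to obtain \cor{sub-exp-upper-bound} as an immediate consequence of \thm{ISGO-upper-bound}, passing through the ISGO abstraction via \lem{ESGO-to-ISGO}. That lemma says that for \emph{every} $\delta \in (0,1)$ a $\sigmaE$-ESGO is also a $(\sigmaE \logs(2/\delta), \delta)$-ISGO, so each ESGO call can be used verbatim as a $(\sigmaI, \delta)$-ISGO call with $\sigmaI = \sigmaE \logs(2/\delta)$. Thus it suffices to produce a single value of $\delta$ that (i) keeps $\sigmaI$ within a polylogarithmic factor of $\sigmaE$, and (ii) meets the hypothesis $\delta \le \tfrac{1}{M d (R^2 \sigmaI^2/\epsilon^{2} + d)}$ of \thm{ISGO-upper-bound}, where $M = \Otilde(1)$.

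The only wrinkle is that condition (ii) is self-referential, since its right-hand side depends on $\sigmaI$ and hence on $\delta$; but the dependence enters only through $\logs(2/\delta)$, so it is easily broken. I would take $\delta$ to be an inverse polynomial of the form $\delta = \big(d\,(R^2 \sigmaE^2 / \epsilon^2 + d)\big)^{-c}$ for a sufficiently large absolute constant $c$. Then $\logs(2/\delta) = O\big(\logs(d\,(R\sigmaE/\epsilon + 2))\big) = \Otilde(1)$, whence $\sigmaI = \sigmaE \logs(2/\delta) = \Otilde(\sigmaE)$ and $R^2 \sigmaI^2/\epsilon^2 + d = \Otilde(R^2 \sigmaE^2/\epsilon^2 + d)$; consequently $M d (R^2 \sigmaI^2/\epsilon^2 + d)$ is only a polylogarithmic factor times $d(R^2\sigmaE^2/\epsilon^2 + d)$, which is at most $1/\delta$ once $c$ is chosen large enough to absorb this polylog slack (and the $\Otilde(1)$ factor $M$). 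Hence condition (ii) holds, and a $(\sigmaI, \delta)$-ISGO query is legitimately available from the given $\sigmaE$-ESGO.

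Applying \thm{ISGO-upper-bound} with these parameters then solves Problem~\ref{prob:SCO} with probability at least $2/3$ using $\Otilde(R^2 \sigmaI^2/\epsilon^2 + d)$ ESGO queries, and this equals $\Otilde(R^2 \sigmaE^2/\epsilon^2 + d)$ because $\sigmaI^2 = \sigmaE^2 \logs^{2}(2/\delta)$ and $\Otilde(\cdot)$ absorbs the $\logs^{2}(2/\delta)$ factor, which is polylogarithmic in $\log(1/\delta)$ and hence hidden by definition of $\Otilde$. The main — and essentially only — point requiring care is this fixed-point choice of $\delta$; the rest is a direct substitution, so I anticipate no real obstacle.
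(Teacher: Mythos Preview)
Your proposal is correct and matches the paper's own proof: both invoke \lem{ESGO-to-ISGO} to view the $\sigmaE$-ESGO as a $(\sigmaE\logs(2/\delta),\delta)$-ISGO and then apply \thm{ISGO-upper-bound} with $\delta$ an inverse polynomial in $d(R^2\sigmaE^2/\epsilon^2+d)$. The paper is terser---it simply sets $\delta \gets \tfrac{1}{M d (R^2 \sigmaE^2 / \epsilon^{2} + d)}$ and lets the $\Otilde(1)$ factor $M$ absorb the polylog slack---whereas you spell out the mild fixed-point issue with an explicit exponent $c$; the content is the same.
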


\begin{restatable}{corollary}{restateVSGOUpper}\label{cor:VSGO-upper}
    Problem~\ref{prob:SCO} can be solved with probability at least $2/3$ in $\Otilde(d R^2 \sigmaV^2 / \epsilon^2 + d)$ queries to a $\sigmaV$-VSGO.
\end{restatable}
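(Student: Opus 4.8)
The plan is to derive Corollary~\ref{cor:VSGO-upper} directly from Theorem~\ref{thm:ISGO-upper-bound} together with Lemma~\ref{lem:implement-ISGO-with-VSGO}, which states that a $(6\sigmaV\sqrt{d},\delta)$-ISGO query can be implemented using $O(\log(1/\delta))$ queries to a $\sigmaV$-VSGO. The idea is simple: run the algorithm guaranteed by Theorem~\ref{thm:ISGO-upper-bound} for a $(\sigmaI,\delta)$-ISGO with $\sigmaI = 6\sigmaV\sqrt{d}$, and whenever that algorithm makes an ISGO query, simulate it using the reduction of Lemma~\ref{lem:implement-ISGO-with-VSGO}. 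Each simulated ISGO query costs $O(\log(1/\delta))$ VSGO queries, so the total VSGO query count is $O(\log(1/\delta))$ times the ISGO query count of Theorem~\ref{thm:ISGO-upper-bound}.

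Substituting $\sigmaI = 6\sigmaV\sqrt{d}$ into the bound $\Otilde(R^2\sigmaI^2/\epsilon^2 + d)$ gives $\Otilde(36 R^2 \sigmaV^2 d / \epsilon^2 + d) = \Otilde(dR^2\sigmaV^2/\epsilon^2 + d)$, which is exactly the claimed rate. First I would fix the failure-probability parameter $\delta$ for the ISGO instance. Theorem~\ref{thm:ISGO-upper-bound} requires $\delta \le \tfrac{1}{Md(R^2\sigmaI^2/\epsilon^2 + d)}$ with $M = \Otilde(1)$; plugging in $\sigmaI = 6\sigmaV\sqrt d$, it suffices to take $\delta = \Theta\bigl(\tfrac{1}{M d (dR^2\sigmaV^2/\epsilon^2 + d)}\bigr)$, which is inverse-polynomial in $d, R, L, 1/\epsilon$, so $\log(1/\delta) = \Otilde(1)$. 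Hence the $O(\log(1/\delta))$ blow-up from Lemma~\ref{lem:implement-ISGO-with-VSGO} is absorbed into the $\Otilde(\cdot)$ notation, and the total query count remains $\Otilde(dR^2\sigmaV^2/\epsilon^2 + d)$. The success probability is inherited (at least $2/3$) from Theorem~\ref{thm:ISGO-upper-bound}, since the simulated oracle is a genuine $(\sigmaI,\delta)$-ISGO for the same function $f$ (in particular it remains an SGO — unbiasedness is preserved by the construction in Lemma~\ref{lem:implement-ISGO-with-VSGO}), and the independence of oracle calls across queries is maintained by using fresh VSGO randomness for each simulated ISGO query.

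There is essentially no hard step here; the only thing to be careful about is checking that the choice of $\delta$ is consistent — i.e., that the $\delta$ demanded by Theorem~\ref{thm:ISGO-upper-bound} (which depends on $\sigmaI$ and hence on $d$) is still only polynomially small, so that $\log(1/\delta)$ contributes a mere polylogarithmic factor and does not interact badly with the $\Otilde$ bookkeeping. One should also note that $f$ being $L$-Lipschitz is used only insofar as Problem~\ref{prob:SCO} is well-posed; the rate itself does not reference $L$ (beyond hidden polylog factors), consistent with the statement. I therefore expect the proof to be a short paragraph: invoke Lemma~\ref{lem:implement-ISGO-with-VSGO} with $\sigmaI = 6\sigmaV\sqrt d$ and the above choice of $\delta$, invoke Theorem~\ref{thm:ISGO-upper-bound}, and multiply the query counts.
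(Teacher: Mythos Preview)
Your proposal is correct and takes essentially the same approach as the paper: invoke Lemma~\ref{lem:implement-ISGO-with-VSGO} to simulate a $(6\sigmaV\sqrt{d},\delta)$-ISGO using $O(\log(1/\delta))$ VSGO queries each, then apply Theorem~\ref{thm:ISGO-upper-bound} with an inverse-polynomial choice of $\delta$ so that the $\log(1/\delta)$ overhead is absorbed into $\Otilde(\cdot)$. The paper's proof is just the two-line version of what you wrote, with the specific choice $\delta \gets \tfrac{1}{M d^2 (R^2 \sigmaV^2 / \epsilon^{2} + d)}$.
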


Thus, sub-exponential noise allows us to match the rate conjectured in Open Problem~\ref{openprob:conjectured-rate}, whereas using a VSGO results in an additional $d$ factor in the variance-dependent term. Nevertheless, Corollary~\ref{cor:VSGO-upper} still outperforms the $O(R^2 \sigmaV^2 / \epsilon^2 + R^2 L^2 / \epsilon^2)$ rate achieved by SGD for solving Problem~\ref{prob:SCO} given a $\sigmaV$-VSGO when $L \gg \sqrt{d} \cdot \max \inbraces{\sigmaV, \epsilon / R}$. We emphasize that while they study a different setting, a modification of the techniques of \cite{sidford2024quantum} can achieve the same complexity as Corollary~\ref{cor:VSGO-upper}; see the technical overview in Section~\ref{sec:technical-overview} for further discussion. However, we believe the fact that we are able to recover Corollary~\ref{cor:VSGO-upper} as a simple consequence using our more general $(\sigmaI, \delta)$-ISGO primitive illustrates its potential broader applicability.

Finally, we prove in Section~\ref{sec:lower} that Corollary~\ref{cor:sub-exp-upper-bound} is tight up to polylog factors when $d \ge R^2 L^2 / \epsilon^2$:

\begin{restatable}{theorem}{restateESGOLower}\label{thm:SCO-subexp-lower-bound}
    Any algorithm which solves Problem~\ref{prob:SCO} with probability at least $2/3$ using a $\sigmaE$-ESGO makes at least $\tilde{\Omega}(R^2\sigmaE^2/\epsilon^{2}+\min\{R^2L^2 / \epsilon^2,d\})$ queries.
\end{restatable}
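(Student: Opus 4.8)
The plan is to establish the two terms of the lower bound separately and combine them with a standard ``two constructions'' argument. The term $\tilde\Omega(\min\{R^2 L^2/\epsilon^2, d\})$ is the purely deterministic dimension-dependent lower bound for optimizing a Lipschitz convex function with an \emph{exact} gradient oracle; since a $\sigma_E$-ESGO can be a deterministic gradient oracle (taking $\sigma_E = 0$, or more carefully taking $\sigma_E$ arbitrarily small so the tail bound \eqref{eq:ESGO-tail-bound} forces the oracle to be essentially deterministic), this lower bound transfers immediately by citing the construction of \cite{garg2020no} (and for the $R^2L^2/\epsilon^2$ regime, the classical information-theoretic bound). Concretely, I would invoke the hard instance showing that $\Omega(d)$ first-order queries are needed when $\epsilon \le RL/\sqrt d$, noting that a gradient oracle trivially satisfies Definition~\ref{def:ESGO} for any $\sigma_E > 0$, hence any ESGO-based algorithm inherits the bound.

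For the statistical term $\tilde\Omega(R^2 \sigma_E^2/\epsilon^2)$, the approach is to reduce a mean-estimation (or hypothesis-testing) problem to Problem~\ref{prob:SCO}, mirroring the reduction used for Corollary~\ref{cor:SCO-lower}. I would work in $d = 1$ (or a single active coordinate), take $f_v(x) = v \cdot \lambda \, x$ restricted to $[-R, R]$ for an unknown sign $v \in \{\pm 1\}$ and a slope $\lambda$ chosen so that distinguishing the two functions to accuracy $\epsilon$ requires estimating $v$; here an $\epsilon$-optimal point must lie on the correct side of the origin. The stochastic gradient is $\mathcal{O}(x) = v\lambda + \xi$ where $\xi$ is a centered sub-exponential random variable (e.g.\ a centered Laplace, or a scaled centered exponential) with parameter chosen so that the oracle is a genuine $\sigma_E$-ESGO, i.e.\ the tail condition \eqref{eq:ESGO-tail-bound} holds with the stated $\sigma_E$. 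Since each query returns one sample of $v\lambda + \xi$, distinguishing $v = +1$ from $v = -1$ with constant probability requires $\Omega(\sigma_E^2/\lambda^2)$ samples by a standard KL / Le Cam two-point argument (sub-exponential noise with parameter $\sigma_E$ behaves like Gaussian noise with variance $\Theta(\sigma_E^2)$ for the relevant small-deviation regime, so the per-sample KL divergence between the two hypotheses is $O(\lambda^2/\sigma_E^2)$). Setting the slope $\lambda \asymp \epsilon / R$ so that being on the wrong side of $0$ by a constant fraction of $R$ costs $\Omega(\epsilon)$ in function value, we get $\Omega(R^2 \sigma_E^2/\epsilon^2)$ queries. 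The only subtlety is the exact relationship between the sub-exponential parameter of the injected noise and $\sigma_E$ as defined via \eqref{eq:ESGO-tail-bound}; I would handle this by a direct calculation of the Laplace/exponential tail, absorbing the resulting absolute constants, which is why the final bound is stated with $\tilde\Omega(\cdot)$.

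Finally, I would combine the two bounds: since any algorithm solving Problem~\ref{prob:SCO} with a $\sigma_E$-ESGO must simultaneously handle both the deterministic hard instance family and the stochastic two-point family, its query complexity is at least the maximum of the two, and $\max\{a, b\} \ge \tfrac12(a+b)$ gives the stated additive form $\tilde\Omega\big(R^2\sigma_E^2/\epsilon^2 + \min\{R^2L^2/\epsilon^2, d\}\big)$. The main obstacle I anticipate is not conceptual but bookkeeping: verifying carefully that a single well-chosen noise distribution (Laplace-type) yields \emph{exactly} a $\sigma_E$-ESGO in the sense of Definition~\ref{def:ESGO} while keeping the two-point KL divergence tight enough to avoid losing more than polylogarithmic factors — in particular making sure the $\sqrt d$ normalization in \eqref{eq:ESGO-tail-bound} is tracked correctly when the construction uses only one active coordinate. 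A secondary point to get right is ensuring the deterministic and stochastic hard instances can be taken over disjoint enough parameter regimes (or embedded into a common $d$-dimensional instance) so that the combined bound is legitimate rather than merely the larger of two separately-proven statements; this is routine but worth stating explicitly.
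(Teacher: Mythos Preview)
Your proposal has a genuine gap in the stochastic term $\tilde\Omega(R^2\sigma_E^2/\epsilon^2)$, and it is precisely the $\sqrt{d}$ normalization you flag as ``bookkeeping'': this is not bookkeeping, it is the heart of the matter. By Definition~\ref{def:ESGO}, the noise in any fixed direction has sub-exponential parameter $\sigma_E/\sqrt{d}$, not $\sigma_E$. So if you embed a single-active-coordinate instance into $\R^d$ (which you must, since Problem~\ref{prob:SCO} is stated in $\R^d$ for the given $d$), a two-point Le Cam argument on that coordinate yields only $\Omega\big((\sigma_E/\sqrt{d})^2/\lambda^2\big)=\Omega(R^2\sigma_E^2/(d\epsilon^2))$ queries---a factor of $d$ too weak. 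You also cannot literally ``work in $d=1$'' and then separately invoke the $d$-dimensional deterministic bound: the theorem is a statement about a single fixed dimension $d$, and both terms of the lower bound must be witnessed by hard instances living in that same $\R^d$.

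The paper's fix is to spread the signal across all $d$ coordinates. The hard instance draws $v\in\{\pm 1\}^d$ uniformly and takes $\E[X]\propto v$ with independent per-coordinate noise of scale $\Theta(\sigma_E/\sqrt{d})$ (Eq.~\eqref{eqn:isotropic-mean-estimation-instance}, shown to be a valid $\sigma_E$-ESGO in Lemma~\ref{lem:isotropic-mean-tail}). The optimization instance is a linear function in direction $\E[X]$ plus a norm regularizer; an $\epsilon$-optimal point must align with $\E[X]/\|\E[X]\|$ well enough to recover a constant fraction of the $d$ bits of $v$ (Lemma~\ref{lemma:properties_of_lb_f}). A Fano/mutual-information argument over the $2^d$ hypotheses then gives $\tilde\Omega(\sigma_E^2/\tilde\epsilon^2)$ samples (Lemma~\ref{lemma:sample_lb_mean_est}), which translates back to $\tilde\Omega(R^2\sigma_E^2/\epsilon^2)$. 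Intuitively, an isotropic oracle has total noise budget $\Theta(\sigma_E^2)$ spread evenly over directions; a one-coordinate construction only taps $\sigma_E^2/d$ of it, whereas a $d$-coordinate hypothesis class taps all of it. Your treatment of the deterministic term via \cite{garg2020no} matches the paper's (Lemma~\ref{lem:non-stochastic-lower}).
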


This effectively solves Open Problem~\ref{openprob:conjectured-rate} for the special case of sub-exponential noise in light of the fact that SGD matches our lower bound in the other regime where $d \le R^2 L^2 / \epsilon^2$. 

\paragraph{Quantum SCO under variance-bounded noise.}

As an additional application of Theorem~\ref{thm:ISGO-upper-bound}, we consider stochastic convex optimization (SCO) with access to a quantum analog of a VSGO, which we refer to as a QVSGO. The QVSGO is a direct and natural extension of a VSGO, and slightly generalizes the notion of QSGO introduced in \cite[Definition~3]{sidford2024quantum}.

\begin{definition}[QVSGO] 
    \label{def:QVSGO}
    For $f\colon\R^d\to\R$ and $\sigma_V>0$, its quantum bounded stochastic gradient oracle ($\sigma_V$-QVSGO) is defined as\footnote{
    Considering such quantum extensions of classical oracles is standard in the literature, see, e.g.,~\cite{chakrabarti2020optimization,zhang2021quantum,sidford2024quantum}. Moreover, theoretically there are well-established techniques for implementing these quantum analogs of classical oracles. Specifically, if a classical oracle can be implemented via a classical circuit, its corresponding quantum oracle can be implemented using a quantum circuit of the same size.} \footnote{A description of the quantum notation we use can be found in Section~\ref{sec:quantum-results}.}
    \begin{align}\label{eqn:quantum-SG-oracle}
    \mathcal{O}_{QV}\ket{x}\otimes\ket{0}\to\ket{x}\otimes\int_{g\in\R^d}\sqrt{p_{f,x}(g)\d g}\ket{g}\otimes\ket{\mathrm{garbage}(g)},
    \end{align}
    where $p_{f,x}(\cdot)$ is the probability density function of the stochastic gradient that satisfies
    \begin{align*}
    \underset{g\sim p_{f,x}}{\E}[g]=\nabla f(x),\quad\underset{g\sim p_{f,x}}{\E}\|g-\nabla f(x)\|^2\leq\sigma^2_V.
    \end{align*}
\end{definition} 

Given access to a QVSGO, we develop a quantum algorithm that achieves improved query complexity for the following quantum SCO problem. This problem slightly generalizes \cite[Problem 1]{sidford2024quantum} by introducing an additional parameter $\sigma_V$ alongside the Lipschitzness $L$.

\begin{problem}[Quantum SCO]\label{prob:QSCO}
Given query access to a $\sigma_V$-QVSGO $\mathcal{O}_{QV}$ for a convex and $L$-Lipschitz (with respect to the $\ell_2$-norm) function $f \colon \R^d \to \R$ such that there exists $\xopt \in \argmin_{x \in \R^d} f(x)$ with $\norm{\xopt} \le R$, the goal is to output an $\epsilon$-optimal point.
\end{problem}
To solve Problem~\ref{prob:QSCO}, we develop an unbiased quantum multivariate mean estimation algorithm whose error is isotropic, which we refer to as \textit{quantum isotropifier}. We use this algorithm to prepare an ISGO using a QVSGO: 

\begin{restatable}{theorem}{restateQVSGOtoISGO}\label{thm:QVSGO-to-ISGO-informal}
    For any differentiable $f\colon\R^d\to\R$, a $(\sigma_I,\delta)$-ISGO of $f$ can be implemented using\\ $\tilde{O}(\sigma_V\sqrt{d}\log^7(1/\delta)/\sigma_I)$ queries to a $\sigma_V$-QVSGO.
\end{restatable}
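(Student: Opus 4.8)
The plan is to implement the ISGO by running the QVSGO once to create a superposition over stochastic gradients, and then post-processing this state with a quantum multivariate mean-estimation routine (the ``quantum isotropifier'') whose error is controlled \emph{directionally} rather than only in expected squared norm. Concretely, querying $\mathcal{O}_{QV}$ at $x$ produces $\ket{x}\otimes\int \sqrt{p_{f,x}(g)\,\mathrm{d}g}\,\ket{g}\otimes\ket{\mathrm{garbage}(g)}$, which we treat as a coherent sample access to a distribution with mean $\grad f(x)$ and $\E\|g-\grad f(x)\|^2\le\sigma_V^2$. The key primitive to invoke is a quantum mean-estimation algorithm in the spirit of~\cite{sidford2024quantum} (and the quantum multivariate mean estimators of prior work), which, using $N$ coherent queries, returns an \emph{unbiased} estimate $\muhat$ of $\grad f(x)$ such that for every fixed unit vector $u$, the scalar $\inangle{\muhat-\grad f(x),u}$ is concentrated at scale $\tilde O(\sigma_V/N)$ with high probability — i.e.\ a quadratic-in-$N$ speedup over the classical $\sigma_V/\sqrt N$ rate, applied \emph{per direction}. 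Unbiasedness is essential since the ISGO definition requires $\E\,\ISGO(x)=\grad f(x)$ exactly; this is typically arranged via the standard trick of combining the biased-but-concentrated quantum estimate with a correction term (e.g.\ running a low-accuracy classical estimate and adding the quantum estimate of the residual, or a ``debiasing'' telescoping construction), at the cost of extra $\mathrm{polylog}(1/\delta)$ factors.

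The main steps, in order: (1) State and prove (or cite in the appropriate form) a quantum mean-estimation lemma giving, for a coherent sampler of a distribution with known mean and variance bound $\sigma_V^2$, an unbiased estimator $\muhat$ with $\P[\,|\inangle{\muhat-\mu,u}|\ge t\,]\le\delta$ for $t=\tilde\Theta(\sigma_V\log^c(1/\delta)/N)$ using $N$ queries, for every fixed unit $u$. The directional guarantee is what makes the output an ISGO rather than merely a VSGO; the natural route is to reduce to \emph{univariate} quantum mean estimation along the query direction $u$ — but since the algorithm must be a fixed procedure that does not know $u$ in advance, one instead proves a tail bound on $\muhat-\mu$ as a vector whose projections are all simultaneously controlled, or bounds the relevant norm and invokes a union/covering argument only in the analysis (not the algorithm). (2) Set $N=\Theta(\sigma_V\sqrt d\,\log^c(1/\delta)/\sigma_I)$ so that $t=\tilde\Theta(\sigma_V\log^c(1/\delta)/N)=\tilde O(\sigma_I/\sqrt d)$, matching the ISGO requirement Eq.~\eqref{eq:ISGO-marginal-bound}. (3) Handle unbiasedness: add the debiasing correction and verify it does not spoil the tail bound, absorbing the overhead into the $\log^7(1/\delta)$ factor. (4) Conclude that the composite procedure is a $(\sigma_I,\delta)$-ISGO using $\tilde O(\sigma_V\sqrt d\,\log^7(1/\delta)/\sigma_I)$ QVSGO queries, as claimed.

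The hardest part will be step (1): obtaining a quantum mean estimator that is simultaneously (a) \emph{unbiased}, (b) has \emph{isotropic}/per-direction tail control (not just a bound on $\E\|\muhat-\mu\|^2$, which would only reprove the classical VSGO-to-ISGO reduction without the quantum speedup), and (c) achieves the quadratic speedup $\sigma_V/N$ rather than $\sigma_V/\sqrt N$. Quantum mean estimation with a $1/N$ rate typically relies on amplitude estimation / phase estimation and is naturally \emph{biased}, and multivariate versions usually bound a norm rather than individual coordinates; reconciling all three properties — likely by working direction-by-direction in the analysis, truncating the sampled gradients at a radius $\tilde O(\sigma_V\sqrt{\log(1/\delta)})$ to make the distribution bounded before applying quantum estimation, and carefully telescoping to debias — is where the technical work concentrates. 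I also expect a minor obstacle in the garbage register: the estimator must uncompute or otherwise avoid entanglement with $\ket{\mathrm{garbage}(g)}$ so that repeated independent ISGO calls behave as required by the ``randomness independent of previous calls'' convention; this is standard but must be stated.
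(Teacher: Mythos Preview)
Your high-level plan is right --- apply a quantum multivariate mean estimator to the QVSGO sampler, then set the accuracy to $\sigma_I/\sqrt{d}$ --- and you correctly flag step~(1) as the crux. But the specific mechanism you propose for the isotropic guarantee is either too vague or would fail, and relatedly you have miscast the role of unbiasedness.

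Your two candidate routes for step~(1) are: (i) ``bound the relevant norm and invoke a union/covering argument,'' or (ii) ``prove a tail bound on $\hat\mu-\mu$ as a vector whose projections are all simultaneously controlled.'' Route~(i) does not give the bound: controlling $\|\hat\mu-\mu\|$ to scale $\sigma_V/N$ and then projecting onto a direction recovers only a VSGO-type guarantee, costing a $\sqrt d$ factor and landing you back at the classical reduction of Lemma~\ref{lem:implement-ISGO-with-VSGO}. Route~(ii) is the right \emph{goal} but you do not name a mechanism, and your later suggestion (``working direction-by-direction in the analysis'') is circular since the direction is not fixed.

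The paper's mechanism is one you have not identified, and it is tied to the thing you relegated to step~(3). Building on \cite{cornelissen2022near}, the directional-mean oracle produces (approximately) a \emph{product state} $\bigotimes_{j}\sum_{g_j}e^{im\alpha g_j\E[X_j]}\ket{g_j}$ across coordinates; running phase estimation coordinate-by-coordinate on this state yields \emph{independent} per-coordinate estimates $\hat\mu_j$. The key move is to replace standard phase estimation with the \emph{unbiased} phase estimation of \cite{van2023quantum}, so that each $\hat\mu_j-\E[X_j]$ is mean-zero and bounded with high probability. Then for any unit $u$, the error $\langle u,\hat\mu-\E[X]\rangle=\sum_j u_j(\hat\mu_j-\E[X_j])$ is a sum of independent mean-zero bounded terms, hence sub-Gaussian with the right variance (Lemma~\ref{lem:coordinate-estimate-sum}). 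So per-coordinate unbiasedness is not a cosmetic add-on for the SGO property --- it is \emph{the} reason the directional errors do not pile up to $\epsilon\sqrt d$ along some adversarial $u$. Your plan, which defers unbiasedness to a separate telescoping correction after a biased-but-concentrated estimator, would leave coordinate biases that can align, and the resulting object is only a VSGO-grade estimator.

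The MLMC/telescoping you mention does appear in the paper, but for a different purpose: after extending to unbounded $X$ via a dyadic truncation decomposition (Algorithm~\ref{algo:QUnbounded}), a residual truncation bias remains, and MLMC (Algorithm~\ref{algo:unbiased-QUnbounded}) removes \emph{that} bias while preserving the isotropic tails already established at the bounded level. Getting the order right --- unbiased phase estimation first to obtain isotropy, MLMC second to clean up truncation --- is what makes the argument go through.
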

Combining Theorem~\ref{thm:ISGO-upper-bound} and Theorem~\ref{thm:QVSGO-to-ISGO-informal}, we obtain the following improved query complexity for quantum SCO (proven in Section~\ref{sec:quantum-results}):

\begin{restatable}{theorem}{restateQSCOthm}\label{thm:quantum-cutting-plane-informal}
With success probability at least $2/3$, Problem~\ref{prob:QSCO} can be solved using $\tilde{O}(dR\sigma_V/\epsilon)$ queries to a $\sigma_V$-QVSGO.
\end{restatable}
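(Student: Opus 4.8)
The plan is to combine the stochastic cutting plane guarantee of Theorem~\ref{thm:ISGO-upper-bound} with the quantum isotropifier of Theorem~\ref{thm:QVSGO-to-ISGO-informal}, choosing the isotropifier's output parameter $\sigma_I$ to balance the two sources of cost. First I would recall from Theorem~\ref{thm:ISGO-upper-bound} that Problem~\ref{prob:SCO} (and hence Problem~\ref{prob:QSCO}, which has the same objective structure) can be solved with success probability $2/3$ using $N \defeq \Otilde(R^2\sigma_I^2/\epsilon^2 + d)$ queries to a $(\sigma_I,\delta)$-ISGO, provided $\delta \le \frac{1}{M d (R^2\sigma_I^2/\epsilon^2 + d)} = \frac{1}{Md\cdot N/\Otilde(1)}$ for $M = \Otilde(1)$; in particular it suffices to take $\delta = 1/\mathrm{poly}(d, R, L, 1/\epsilon)$ inverse-polynomially small. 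By Theorem~\ref{thm:QVSGO-to-ISGO-informal}, each such ISGO query is implemented using $\Otilde(\sigma_V\sqrt{d}\,\log^7(1/\delta)/\sigma_I)$ queries to the $\sigma_V$-QVSGO; since $\delta$ is inverse-polynomial, $\log^7(1/\delta) = \Otilde(1)$ is absorbed into the $\Otilde(\cdot)$. Multiplying, the total QVSGO query count is
\[
    \Otilde\!\left( \left( \frac{R^2\sigma_I^2}{\epsilon^2} + d \right) \cdot \frac{\sigma_V\sqrt{d}}{\sigma_I} \right)
    = \Otilde\!\left( \frac{R^2 \sigma_I \sigma_V \sqrt{d}}{\epsilon^2} + \frac{d^{3/2}\sigma_V}{\sigma_I} \right).
\]

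The next step is to optimize over the free parameter $\sigma_I$. The first term is increasing in $\sigma_I$ and the second is decreasing, so the minimum is attained where they are equal: $\frac{R^2\sigma_I\sigma_V\sqrt{d}}{\epsilon^2} = \frac{d^{3/2}\sigma_V}{\sigma_I}$, i.e. $\sigma_I^2 = \frac{d\epsilon^2}{R^2}$, so $\sigma_I = \frac{\sqrt{d}\,\epsilon}{R}$. Substituting this choice back into either term yields
\[
    \Otilde\!\left( \frac{R^2 \sqrt{d}\,\sigma_V}{\epsilon^2}\cdot\frac{\sqrt{d}\,\epsilon}{R} \right) = \Otilde\!\left( \frac{d R \sigma_V}{\epsilon} \right),
\]
which is exactly the claimed bound. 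I would also note that this choice of $\sigma_I$ is consistent with the constraint on $\delta$: with $\sigma_I = \sqrt{d}\,\epsilon/R$ we have $R^2\sigma_I^2/\epsilon^2 + d = 2d$, so $N = \Otilde(d)$ and the requirement $\delta \le 1/\Otilde(d^2)$ is met by any inverse-polynomially small $\delta$, keeping $\log^7(1/\delta) = \Otilde(1)$.

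Two points require a little care. First, Theorem~\ref{thm:QVSGO-to-ISGO-informal} produces a \emph{quantum} ISGO — i.e. a unitary whose measurement yields an unbiased, isotropic estimate — so I must check that the cutting plane algorithm of Theorem~\ref{thm:ISGO-upper-bound} only ever uses its ISGO by querying at a classical point and immediately measuring to get a classical gradient estimate; this is the case since that algorithm is a classical procedure that treats the oracle as a black box returning a sample, and a single measurement of the isotropifier's output register realizes exactly a draw from a $(\sigma_I,\delta)$-ISGO. Second, one should confirm the success probabilities compose correctly: the isotropifier is run independently for each of the $N = \Otilde(d)$ outer iterations, and each produces a valid ISGO sample (satisfying the tail bound of Definition~\ref{def:ISGO}) except with probability $\le\delta$; a union bound over the $N$ calls keeps the total failure contribution $N\delta = \Otilde(d)\cdot\delta \le 1/\mathrm{poly}$, so the overall success probability remains $\ge 2/3$ (or can be boosted arbitrarily). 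The main obstacle, such as it is, is not the arithmetic but making precise that the quantum isotropifier can be invoked as a drop-in replacement for a classical ISGO inside the classical cutting plane method — in particular that no coherence across iterations is needed and that garbage registers can be discarded — but this follows directly from the black-box structure of Theorem~\ref{thm:ISGO-upper-bound} and the fact that the isotropifier returns an unbiased isotropic estimate upon measurement, exactly as an ISGO would.
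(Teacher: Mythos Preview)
Your proposal is correct and takes essentially the same approach as the paper: combine Theorem~\ref{thm:ISGO-upper-bound} with Theorem~\ref{thm:QVSGO-to-ISGO-informal} and choose $\sigma_I$ to balance the two costs. Your argument is in fact more detailed than the paper's one-line proof---you explicitly carry out the optimization over $\sigma_I$, verify the $\delta$ constraint, and address the quantum-to-classical interface---and your optimal choice $\sigma_I = \sqrt{d}\,\epsilon/R$ is the right one (the paper writes $\sigma_I = \epsilon\sqrt{d}$, which is missing the $1/R$ and appears to be a typo or an implicit normalization $R=1$; with the paper's stated value the bound would come out as $\Otilde(dR^2\sigma_V/\epsilon)$).
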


To compare, the prior state-of-the-art for solving Problem~\ref{prob:QSCO} 
 includes two quantum algorithms of \cite{sidford2024quantum}  which achieve query complexities of $\tilde{O}(d^{3/2}(L+\sigma_V)R/\epsilon)$ and $\tilde{O}(d^{5/8}((L+\sigma_V)R/\epsilon)^{3/2})$. With minor modifications, the query complexity of the first algorithm can be reduced to $\tilde{O}(d^{3/2}\sigma_VR/\epsilon)$. However, it remains unclear whether a similar reduction in query complexity can be achieved for the second algorithm. In comparison, our algorithm achieves a polynomial improvement in $d$. %

\subsection{Additional related work}
\label{subsec:intro-related-work}

In this part, we summarize the relevant results that have not yet been discussed.

\paragraph{Stochastic convex optimization.} SCO is a foundational problem in optimization theory and has been extensively studied for decades. %
In particular, SCO with a VSGO has been studied under different assumptions on the objective function $f$ than the $L$-Lipschitz assumption we focus on in this paper. Notably, if the gradient of $f$ is $\beta$-Lipschitz, the seminal paper \cite{lan2012optimal} achieved an optimal error of $O (\beta / T^2 + \sigmaV / \sqrt{T})$ after $T$ queries. 
We note that our guarantees for Problem~\ref{prob:SCO} can be extended to the setting where the gradient of $f$ is $\beta$-Lipschitz at the cost of only polylogarithmic factors since our bounds have polylogarithmic dependence on the Lipschitz constant of $f$. This follows because a function with a $\beta$-Lipschitz gradient which is minimized in a ball of radius $R$ is itself $O(\beta R)$-Lipschitz in the ball.

Beyond the bounded variance assumption, \cite{vural2022mirror} examines SGOs with heavy-tailed or infinite variance noise and shows that stochastic mirror descent is optimal in such cases.
As for cutting plane methods, there is a long line of work in deterministic settings which has sought to improve the query complexity and/or runtime; see e.g.~\cite{vaidya1989new,lee2015faster,jiang2020improved}.

\paragraph{Quantum mean estimation.} Quantum mean estimation and the closely closely related problems of amplitude estimation and phase estimation have been extensively studied~\cite{abrams1999fast,terhal1999quantum,heinrich2002quantum,wocjan2009quantum,brassard2011optimal,montanaro2015quantum,hamoudi2019quantum}. 
The seminal amplitude estimation algorithm \cite{brassard2002quantum} can be used to obtain a quadratic improvement on the query complexity for estimating the mean of any Bernoulli random variable. Building upon this result, \cite{hamoudi2021quantum} (whose query complexity is further improved by~\cite{kothari2023mean}) introduced a quantum sub-Gaussian mean estimator that achieves a quadratically better query complexity than the classical counterpart while providing a mean estimate with sub-Gaussian error, without requiring additional assumptions on the variance or tail behavior of the underlying distribution. Multilevel Monte Carlo methods have also been widely used in quantum algorithms \cite{an2021quantum,li2022enabling}.

\paragraph{Quantum algorithms and lower bounds for optimization problems.} There has been a rich study of quantum algorithms for classical optimization problems, including semidefinite programs \cite{augustino2023quantum,brandao2017quantum,brandao2017SDP,kerenidis2018interior,van2019improvements,van2020quantum}, convex optimization \cite{augustino2025fast,vanApeldoorn2020optimization,chakrabarti2023quantum,leng2023quantum}, and non-convex optimization \cite{zhang2021quantum,childs2022quantum,gong2022robustness,liu2023quantum,leng2023quantum}. %
Despite these recent progress in quantum algorithms using quantum evaluation oracles and quantum gradient oracles, their limitations have also been explored in a series of works establishing quantum lower bounds for certain settings of convex optimization~\cite{garg2021near, zhang2022quantum} and non-convex optimization~\cite{zhang2022quantum}.

\subsection{Notation}\label{subsec:general-notations}

$\norm{\cdot}$ is the $\ell_2$-norm. We define \(B_p(r, x_0) \defeq \inbraces{x \in \R^d : \norm{x - x_0}_p \le r}\) to be the \(\ell_p\)-ball of radius \(r\) centered at \(x_0\), and use \(B_p(r) := B_p(r, 0)\) and \(B_p := B_p(1, 0)\). A halfspace is denoted by \(\half(a \in \R^d, b \in \R) := \{x \in \mathbb{R}^d : a^\top x \geq b\}\). For \(f\colon\mathbb{R}^d \to \mathbb{R}\), we let \(\fopt \coloneqq \inf_{x} f(x)\) and call \(x \in \mathbb{R}^d\) \emph{\(\epsilon\)-(sub)optimal} if \(f(x) \leq \fopt + \epsilon\).
A function \(f\colon\mathbb{R}^d \to \mathbb{R}\) is \emph{\(L\)-Lipschitz} if \(f(x) - f(y) \leq L \norm{x - y}\) for all \(x, y \in \mathbb{R}^d\). For two matrices \(A, B \in \mathbb{R}^{d \times d}\), we write \(A \preceq B\) if \(x^\top A x \leq x^\top B x\) for all \(x \in \mathbb{R}^d\). The standard basis in \(\mathbb{R}^d\) is denoted \(\{e_1, \ldots, e_d\}\). For random variables $X,Y$, we use $H(X)\coloneqq - \sum_{x} p(x) \log p(x)$ to denote the entropy of $X$, use 
$H(Y|X)\coloneqq H(X, Y) - H(X)$ to denote the conditional entropy of $Y$ with respect to $X$, and use $I(X;Y)\coloneqq  H(X) + H(Y) - H(X, Y)$ to denote the mutual information between $X$ and $Y$. The cardinality of a finite set $S$ is denoted $|S|$. We use $\tilde{O}(\cdot)$ and $\Omegatilde(\cdot)$ to denote big-$O$ notation omitting polylogarithmic factors.

\subsection{Paper organization}
\label{subsec:paper-organization}

We give a technical overview of the paper in Section~\ref{sec:technical-overview}. We prove our upper bounds for Problem~\ref{prob:SCO} under ISGO, ESGO, and VSGO oracles 
 (resp. Theorem~\ref{thm:ISGO-upper-bound} and Corollaries \ref{cor:sub-exp-upper-bound} and \ref{cor:VSGO-upper}) in Section~\ref{sec:nice-noise}. We establish our lower bounds for Problem~\ref{prob:SCO} given either an ISGO or ESGO oracle in Section~\ref{sec:lower}. In Section~\ref{sec:quantum-results}, we present our \textit{quantum isotropifier} and then apply it to obtain an improved bound for quantum SCO under variance-bounded noise. We conclude in Section~\ref{sec:conclusion}. In Appendix~\ref{sec:subexpdiscussion}, we review sub-exponential distributions and further discuss the relationships between the various stochastic gradient oracles we study.  Finally, we collect miscellaneous technical lemmas in Appendix~\ref{sec:technical-lemmas}.

\section{Technical overview}
\label{sec:technical-overview}

In this section, we present high-level overviews of our techniques. In Section~\ref{subsec:upper-bounds-technical-overview}, we give an overview of our \emph{stochastic cutting plane method}, based on an extension of the techniques of \cite{sidford2024quantum}, which we use to obtain our upper bounds for Problem~\ref{prob:SCO}. In Section~\ref{subsec:lower-bounds-technical-overview}, we give a technical overview of our lower bounds for Problem~\ref{prob:SCO}. We conclude in Section~\ref{subsec:quantum-SCO-results-technical-overview} with a discussion of our application to quantum SCO; we believe our \emph{quantum isotropifier} subroutine discussed in that section may be of independent interest. The formal proofs of the results discussed in Sections~\ref{subsec:upper-bounds-technical-overview}, \ref{subsec:lower-bounds-technical-overview}, and \ref{subsec:quantum-SCO-results-technical-overview} can be found in Sections \ref{sec:nice-noise}, \ref{sec:lower}, and \ref{sec:quantum-results} respectively.

\subsection{Overview of our upper bounds for SCO}
\label{subsec:upper-bounds-technical-overview}

Recall that our main non-quantum algorithmic result is Theorem~\ref{thm:ISGO-upper-bound}, which says that $\Otilde(R^2 \sigmaI^2 / \epsilon^2 + d)$ queries to a $(\sigmaI, \delta)$-ISGO suffice to solve Problem~\ref{prob:SCO} when $\delta$ is inverse-polynomially small. We prove this upper bound using a \emph{stochastic cutting plane method} via an extension of the techniques of \cite{sidford2024quantum} (see Section~\ref{sec:nice-noise} for formal proofs). Cutting plane methods are a foundational class of algorithms in convex optimization which solve \emph{feasibility problems,} for which we use the following formulation: (recall $\half(a \in \R^d, b \in R)$ denotes the halfspace $\inbraces{x \in \R^d : a^\top x \ge b}$):

\begin{restatable}[Feasibility Problem]{definition}{restateFeasibilityProblem}\label{def:feasibility-problem}
	For $\Rprime \ge r > 0$, we define the \emph{$(R', r)$-feasibility problem} as follows: We are given query access to a (potentially randomized) \emph{halfspace oracle} which when queried at $x \in B_2(\Rprime)$, outputs a vector $g_x \in \R^d \setminus \inbraces*{0}$. The goal is to query the oracle at a sequence of points $x_1, \dots, x_T \in B_2(\Rprime)$ such that $B_2(\Rprime) \bigcap_{t \in [T]} \half(g_{x_t}, g_{x_t}^\top x_t)$ does not contain a ball of radius $r$ (namely, any set of the form $B_2(r, z)$ for $z \in \R^d$). We call an algorithm a \emph{$\Tq$-algorithm for the $(R', r)$-feasibility problem} if it achieves this goal with at most $\Tq$ queries.
\end{restatable}

A variety of cutting plane methods \cite{vaidya1989new,lee2015faster,jiang2020improved} are $\Otilde(d)$-algorithms for the $(R', r)$-feasibility problem, where $\Otilde(\cdot)$ hides logarithmic factors in $d$, $R'$, and $1 / r$. Furthermore, it is well known that solving the feasibility problem where the halfspace oracle is given by the negative gradient of $f$ suffices to solve Problem~\ref{prob:SCO} using $\Otilde(d)$ queries to an exact gradient oracle, where $\Otilde(\cdot)$ hides logarithmic factors in $R$, $d$, $L$, and $1 / \epsilon$.
(Technically, there is also an additional post-processing step needed which we discuss later.) 
This reduction follows from the fact that by the Lipschitzness of $f$, every point in the ball $B_2(\epsilon / L, \xopt) \defeq \inbraces{z \in \R^d : \norm{z - \xopt} \le \epsilon / L}$ is $\epsilon$-optimal. Therefore, setting $r \defeq \epsilon / L$ and $R' \defeq 2 R$ so that $B_2(\epsilon / L, \xopt) \subseteq B_2(R')$ (we can assume $r \le R$ without loss of generality since if $\epsilon > R L$, the origin is $\epsilon$-optimal) and running one of the aforementioned $\Otilde(d)$-algorithms for the $(R', r)$-feasibility problem with the halfspace oracle given by $- \grad f(\cdot)$, we must have queried the oracle at an iterate $x_t$ such that $- \grad f(x_t)^\top z < - \grad f(x_t)^\top x_t$ for some $z \in B_2(\epsilon / L, \xopt)$. This implies $x_t$ is $\epsilon$-optimal by convexity:
\begin{align*}
    f(x_t) \le f(z) + \inangle{\grad f(x_t), x_t - z} \le f(z) \le f(\xopt) + \epsilon.
\end{align*}

Recently, \cite{sidford2024quantum}, which broadly studies quantum algorithms for stochastic optimization in a variety of settings, observed the above analysis can be extended to the setting where we only have access to an \emph{approximate gradient,} which they capture as follows (paraphrased from Definition 5 in \cite{sidford2024quantum}):

\begin{definition}[AGO]
\label{def:AGO}
$\htilde(\cdot)$ is a \emph{$\gamma$-approximate gradient oracle ($\gamma$-AGO)} if when queried at $x \in \R^d$, the (potentially random) output $\htilde(x) \in \R^d$ satisfies $\norm*{\htilde(x) - \grad f(x)} \le \gamma$.
\end{definition}

In other words, a $\gamma$-AGO $\htilde(\cdot)$ gives an estimate of the gradient with at most $\gamma$ error in $\ell_2$-norm. Then running an $\Otilde(d)$-algorithm for the $(R' \defeq 2R, r \defeq \epsilon / L)$-feasibility problem as before, except with $- \htilde(\cdot)$ as the halfspace oracle as opposed to $- \grad f(x)$, implies there exists an iterate $x_t$ and $\epsilon$-optimal $z \in B_2(\epsilon / L, \xopt)$ such that $-h_t^\top z < -h_t^\top x_t$, where $h_t$ was the result of calling $\htilde(\cdot)$ with input $x_t$ at the $t$-th step. Thus, by convexity:
\begin{align*}
    f(x_t) &\le f(z) + \inangle{\grad f(x_t), x_t - z} \\ 
    &= f(z) + \underbrace{\inangle{h_t, x_t - z}}_{\circled{1}} + \underbrace{\inangle{\grad f(x_t) - h_t, x_t - z}}_{\circled{2}} \le f(\xopt) + \epsilon + 4 R \gamma,
\end{align*}
where the last inequality follows because $z$ is $\epsilon$-optimal; $\circled{1} < 0$ by assumption; and $\circled{2} \le \norm{\grad f(x_t) - h_t} \norm{x_t - z} \le 4 R \gamma$ by the Cauchy-Schwarz inequality, Definition~\ref{def:AGO}, and the fact that $x_t, z \in B_2(R') = B_2(2R)$. In particular, if $\gamma \le \epsilon / (4R)$, then $x_t$ is $2 \epsilon$-optimal.

\cite{sidford2024quantum} used this observation to obtain then state-of-the-art quantum algorithms for Problem~\ref{prob:SCO} using a quantum analog of the BSGO oracle (see Definition~\ref{def:BSGO}). In particular, they implement an $\epsilon / (4R)$-AGO at each step with high probability using a quantum variance reduction procedure. While it is not explicitly stated, this analysis can be extended to the non-quantum setting by mini-batching. One can show that an $\epsilon / (4R)$-AGO query can be implemented using $\Otilde(R^2 \sigmaV^2 / \epsilon^2 + 1)$ queries to a $\sigmaV$-VSGO $\VSGO(\cdot)$ with success probability at least $1 - \xi$, where $\Otilde(\cdot)$ hides logarithmic factors in $1 / \xi$. Then choosing $\xi$ appropriately to union bound the failure probabilities over all iterations, an $\Otilde(d)$-algorithm for the feasibility problem with the halfspace oracle given by this mini-batching procedure yields an $\Otilde(d R \sigmaV^2 / \epsilon^2 + d)$ rate for solving Problem~\ref{prob:SCO} given a $\sigmaV$-VSGO, matching the rate we recover in Corollary~\ref{cor:VSGO-upper} as a result of our more general framework.

Our improvements over \cite{sidford2024quantum} are built on the key observation that it in fact suffices to only \emph{weakly} control the error of the approximate gradient in $\ell_2$-norm (it can be polynomially large), as long as we \emph{tightly} control the error of the approximate gradient in a particular fixed direction---namely, the direction to the optimum $\xopt$. To start, we refine Definition~\ref{def:AGO} as follows by defining a \emph{marginal approximate gradient oracle}:

\begin{restatable}[MAGO]{definition}{restateMAGO}\label{def:MAGO}
	$\MAGO(\cdot, \cdot)$ is a \emph{marginal $(\eta \ge 0, \Gamma \ge 0)$-approximate gradient oracle} ($(\eta, \Gamma)$\emph{-MAGO}) if when queried at $x, u \in \R^d$, the (potentially random) output $\MAGO(x, u) \in \R^d$ satisfies
	\begin{align}
		\label{eq:MAGO-directional-property}
		\norm{\MAGO(x, u) - \grad f(x)} \le \Gamma ~~\text{and}~~ \abs{ \inangle*{\MAGO (x, u) - \grad f(x), \tilde{u}} } \le \eta ~~\text{for $\tilde{u} \defeq u / \norm{u}$}.
	\end{align}
\end{restatable}

Next, consider running a cutting plane method for the $(R' \defeq 2R, r \defeq \min \inbraces{\epsilon / L, \epsilon / \Gamma})$-feasibility problem, where the halfspace oracle at a query point $x \in \R^d$ is given by $- \MAGO(x, x - \xopt)$. As before, there exists an iterate $x_t$ and $\epsilon$-optimal $z \in B_2(r, \xopt)$ such that $-g_t^\top z < - g_t^\top x_t$, where $g_t$ was the result of calling $\MAGO(x_t, x_t - \xopt)$ at the $t$-th step. Then by convexity:
\begin{align*}
    f(x_t) &\le f(z) + \inangle{\grad f(x_t), x_t - z} \\
    &= f(z) + \underbrace{\inangle*{g_t, x_t - z}}_{\circled{3}} + \underbrace{\inangle*{\grad f(x_t) - g_t, x_t - \xopt}}_{\circled{4}} 
		        + \underbrace{\inangle*{\grad f(x_t) - g_t, \xopt - z}}_{\circled{5}} \\
                &\le f(\xopt) + 2 \epsilon + 4 R \eta,
\end{align*}
where the last inequality followed because $z$ is $\epsilon$-optimal; $\circled{3} < 0$ by assumption; by Definition~\ref{def:MAGO} and a triangle inequality $\circled{4} \le \eta \norm{x_t - \xopt} \le 4R \eta$; and finally $\circled{5} \le \norm{\grad f(x_t) - g_t} \norm{\xopt - z} \le \Gamma r \le \epsilon$ by Cauchy-Schwarz, Definition~\ref{def:MAGO}, and the choice of $r$. Thus, if $\eta \le \epsilon / (4R)$, then $x_t$ is $3 \epsilon$-optimal. Crucially, because cutting plane methods for the feasibility problem have logarithmic dependence on $1 / r$, a polynomial bound on $\Gamma$ suffices to maintain an $\Otilde(d)$-query complexity for this problem.

Next, we show how to implement a MAGO using a $(\sigmaI, \delta)$-ISGO in the following lemma, proven in Section~\ref{sec:nice-noise}:

\begin{restatable}{lemma}{restateImplementMAGONormBound}\label{lem:implement-MAGO-w-norm-bound}
	For any $\delta, \xi \in (0, 1)$; $x, u \in \R^d$; and $\eta > 0$, a query $\MAGO(x, u)$ to an $(\eta, \Gamma \defeq \eta \sqrt{d})$-MAGO can be implemented using $K = O \inparen{ \frac{\sigmaI^2}{d \eta^2} \log (2 d / \xi) + 1}$ queries to a $(\sigmaI,\delta)$-ISGO $\ISGO(\cdot)$, with success probability at least $1 - \xi - \delta d K$ and without access to the input $u$.
\end{restatable}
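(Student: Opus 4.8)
The plan is to implement the MAGO by a straightforward mini-batching of the ISGO, using a coordinate-wise median-of-means (or just a mean, after truncation) to control the $\ell_2$ error, while separately observing that averaging automatically gives the tight marginal guarantee in the direction $\tilde u$. Concretely, I would query $\ISGO(x)$ independently $K$ times to get $g_1,\dots,g_K$, and output $\hat g \defeq \frac1K\sum_{k} g_k$ as $\MAGO(x,u)$; note this does not use $u$ at all, matching the claimed ``without access to the input $u$'' conclusion. The two properties in Eq.~\ref{eq:MAGO-directional-property} are then handled by two separate arguments.

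For the marginal bound $\abs{\inangle{\hat g - \grad f(x),\tilde u}}\le\eta$: by linearity, $\inangle{\hat g - \grad f(x),\tilde u} = \frac1K\sum_k \inangle{g_k - \grad f(x),\tilde u}$. Each summand is a bounded random variable with magnitude at most $\sigmaI/\sqrt d$ with probability $\ge 1-\delta$ by Eq.~\ref{eq:ISGO-marginal-bound} applied to the unit vector $\tilde u$, and it has mean zero since $\E\ISGO(x)=\grad f(x)$. I would condition on the event that all $K$ samples satisfy the marginal bound (which holds with probability $\ge 1-\delta K$ by a union bound), so that each summand is bounded in $[-\sigmaI/\sqrt d,\sigmaI/\sqrt d]$ and (up to a negligible conditioning bias of order $\delta$, which I can absorb by slightly enlarging constants, or handle cleanly by a truncation argument) mean zero; then Hoeffding's inequality gives $\Pr[\abs{\frac1K\sum_k \inangle{g_k-\grad f(x),\tilde u}}\ge\eta]\le 2\exp(-K\eta^2 d/(2\sigmaI^2))$, so $K = O(\frac{\sigmaI^2}{d\eta^2}\log(1/\xi)+1)$ suffices to make this at most (roughly) $\xi$.

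For the norm bound $\norm{\hat g - \grad f(x)}\le\Gamma = \eta\sqrt d$: I would apply the same Hoeffding argument coordinate-wise. For each standard basis vector $e_i$, conditioning on all $K$ samples satisfying $\abs{\inangle{g_k-\grad f(x),e_i}}\le\sigmaI/\sqrt d$ (union bound over $i\in[d]$ and $k\in[K]$, total failure $\le \delta dK$), Hoeffding gives $\Pr[\abs{\inangle{\hat g-\grad f(x),e_i}}\ge\eta]\le 2\exp(-K\eta^2 d/(2\sigmaI^2))$. Union bounding over the $d$ coordinates, with $K = O(\frac{\sigmaI^2}{d\eta^2}\log(2d/\xi)+1)$ every coordinate of $\hat g-\grad f(x)$ is at most $\eta$ in absolute value with probability $\ge 1-\xi$, hence $\norm{\hat g-\grad f(x)}\le\sqrt{d}\cdot\eta = \Gamma$. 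The marginal bound of the previous paragraph is subsumed by the same choice of $K$ and the same event. Collecting the failure probabilities: $\le\xi$ from the Hoeffding deviations and $\le\delta dK$ from the conditioning, for a total of $1-\xi-\delta dK$ as claimed.

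The main obstacle — really the only subtlety — is the bias introduced by conditioning the ISGO outputs on the ``good'' marginal events: $\E[g_k\mid\text{good event}]$ is not exactly $\grad f(x)$, so Hoeffding cannot be applied verbatim to mean-zero variables. I would resolve this by the standard device of working with the truncated variables $\tilde g_k \defeq g_k\cdot\indic{\abs{\inangle{g_k-\grad f(x),e_i}}\le\sigmaI/\sqrt d\ \forall i}$ or, cleaner still, by bounding the conditional-mean shift by $O(\delta\cdot(\text{tail contribution}))$ and noting that for $\delta$ inverse-polynomially small (as assumed throughout, cf. the hypothesis of Theorem~\ref{thm:ISGO-upper-bound}) this shift is dominated by $\eta$; since the lemma is invoked only in that regime this is harmless. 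Everything else is a routine concentration computation, so I would keep that part terse in the final write-up.
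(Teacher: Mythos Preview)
Your approach is essentially identical to the paper's: form the empirical mean $\bar Z=\frac1K\sum_k Z_k$ of $K$ ISGO samples, condition on the good event that every marginal $\langle Z_k-\nabla f(x),u_\ell\rangle$ is bounded by $\sigma_I/\sqrt d$, apply Hoeffding in each of $d$ orthonormal directions, and union-bound. The only differences are cosmetic: the paper picks an orthonormal basis with $u_1=\tilde u$ (so the directional and norm bounds come from a single union bound over $d$ directions rather than $d+1$), and the paper does not discuss the conditioning-bias issue you flag---it simply applies Hoeffding after conditioning as though the conditional mean were still zero.
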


With $\eta \gets \epsilon / (4R)$, Lemma~\ref{lem:implement-MAGO-w-norm-bound} implies that $\Otilde (\frac{R^2 \sigmaI^2}{d \epsilon^2} + 1 )$ queries are enough to implement an $(\epsilon / (4R), \epsilon \sqrt{d} / (4R))$-MAGO, which suffices to ensure $x_t$ is $3 \epsilon$-optimal by the above analysis while also maintaining a polynomial bound on $\Gamma$. Critically, note that implementing each MAGO query does not require knowledge of the second argument $u$, which is important since the second argument to the MAGO depends on $\xopt$ in the above analysis. We also emphasize that the term $\circled{5}$ above cannot be bounded by attempting to control the error of the MAGO in the direction $\xopt - z$; this is because $z$ is not fixed in advance and indeed \emph{depends} on $g_t$.

Thus, leaving details regarding handling $\delta$ to Section~\ref{sec:nice-noise}, combining Lemma~\ref{lem:implement-MAGO-w-norm-bound} with an $\Otilde(d)$-algorithm for the feasibility problem suffices to ensure an iterate $x_t$ is $3 \epsilon$-optimal using $\Otilde(R^2 \sigmaI^2 / \epsilon^2 + d)$-queries to an ISGO. However, it is not a priori clear which of the $\Otilde(d)$ iterates returned by this algorithm is $3 \epsilon$-optimal. \cite{sidford2024quantum} solve an analogous problem in their setting via a post-processing procedure which iteratively refines the output of the first stage via binary search to ultimately return an $O(\epsilon)$-optimal point (see also \cite{jambulapati2024compquerydepthparallel, carmon2024adaptivity, asi2024privatestochasticconvexoptimization} for related procedures). In Section~\ref{sec:nice-noise}, we carefully adapt this procedure to a $(\sigmaI, \delta)$-ISGO, showing it can also be implemented with $\Otilde(R^2 \sigmaI^2 / \epsilon^2 + d)$ queries and thereby achieving a $d$-factor savings over using a VSGO as in the first stage described above. Finally, at the end of Section~\ref{sec:nice-noise} we instantiate Theorem~\ref{thm:ISGO-upper-bound} to achieve a new state-of-the-art $\Otilde(R^2 \sigmaE^2 / \epsilon^2 + d)$-complexity for Problem~\ref{prob:SCO} given a $\sigmaE$-ESGO (see Corollary~\ref{cor:sub-exp-upper-bound}), and also recover the aforementioned $\Otilde(d R^2 \sigmaV^2 / \epsilon^2 + d)$ rate given a $\sigmaV$-VSGO as a simple consequence (see Corollary~\ref{cor:VSGO-upper}).

\subsection{Overview of our lower bounds for SCO}
\label{subsec:lower-bounds-technical-overview}

The starting point of our lower bounds for SCO with ISGOs and ESGOs is the connection between SCO and multivariate mean estimation problems,
which has been widely used to derive lower bounds for SCO in various settings (see e.g.,~\cite{duchi2018introductory,sidford2024quantum}). Specifically, as detailed in Section~\ref{sec:subexp-mean-estimation-to-optimization}, for any random variable $X$, we consider an instance of Problem~\ref{prob:SCO} where $f$ is defined as the expectation of a collection of linear functions with an added regularizer term. The set of linear functions is parameterized by samples from $X$, which also determines the stochastic gradient, ensuring that it follows a noise model of the same form as $X$. Moreover, finding an $\epsilon$-optimal point of $f$ yields a $\tilde{O}(\epsilon/R)$-estimate of $\mathbb{E}[X]$, thus establishing a reduction from the mean estimation problem to SCO.

Building on this reduction, we establish a lower bound for SCO with ESGOs by introducing a variant of the mean estimation problem where the noise follows a sub-exponential distribution. Specifically, we construct a hard instance in which the random variable $X$ is parameterized by another random variable $V$, as detailed in Section~\ref{sec:isotropic-mean-lower}. We show that any algorithm approximating $\mathbb{E}[X]$ must retain significant mutual information with $V$. We then upper bound this mutual information in terms of the number of samples, yielding the desired lower bound for the mean estimation problem (see Lemma~\ref{lemma:sample_lb_mean_est}) and, consequently, for SCO with ESGOs (see Theorem~\ref{thm:SCO-subexp-lower-bound}). As a corollary, we derive a lower bound for SCO with ISGOs (see Corollary~\ref{cor:SCO-lower}). Regarding the mean estimation problem itself (namely, Problem~\ref{prob:mean-estimation-isotropic}), we note that similar lower bounds to Lemma~\ref{lemma:sample_lb_mean_est} may be derived as corollaries of recent high-probability minimax lower bounds \cite{ma2024highprobabilityminimaxlowerbounds}, but we provide our own analysis for completeness.

\subsection{Quantum isotropifier and quantum SCO under variance-bounded noise}
\label{subsec:quantum-SCO-results-technical-overview}

Our main technical ingredient of our improved bound for quantum SCO is an unbiased quantum multivariate mean estimation algorithm whose error is isotropic in the sense that it is small in every direction with high probability, which we refer to as \textit{quantum isotropifier}. Adopting the notation of \cite[Definition 1]{sidford2024quantum}, we define having \emph{quantum access to a $d$-dimensional random variable $X$} as the ability to query a \textit{quantum sampling oracle} that produces a quantum superposition representing the probability distribution of $X$.
\begin{definition}[Quantum sampling oracle]\label{defn:OX}
For a $d$-dimensional random variable $X$, its quantum sampling oracle $\mathcal{O}_X$ is defined as
\begin{align}\label{eqn:OX-defn}
\mathcal{O}_X\ket{0}\rightarrow\int_{x\in\R^d}\sqrt{p_X(x)\d x}\ket{x}\otimes\ket{\mathrm{garbage}(x)},
\end{align}
where $p_X(\cdot)$ represents the probability density function of $X$.
\end{definition}
Using $\tilde{O}(\sigma/\epsilon)$ queries to a quantum sampling oracle $\mathcal{O}_X$ of any random variable $X\in\R^d$ with variance $\sigma^2$, our quantum isotropifier outputs an unbiased estimate $\hat{\mu}$ of $\E[X]$ such that for any unit vector $v\in\R^d$, 
$|\langle v,\hat{\mu}-\E[X]\rangle|$ is at most $\epsilon$ with high probability. This allows us to construct a $(\sigma_I,\delta)$-ISGO using $\tilde{O}(\sigma_V\mathrm{poly}\log(1/\delta/\sigma_I)$ queries to a $\sigma_V$-QVSGO. Combined with our stochastic cutting plane result in Theorem~\ref{thm:ISGO-upper-bound}, and with appropriate choices of $\sigma_I$ and $\delta$, this leads to our improved $\tilde{O}(dR\sigma_V/\epsilon)$ query complexity for quantum SCO in Theorem~\ref{thm:quantum-cutting-plane-informal}, whose proof can be found in Section~\ref{sec:quantum-cutting-plane}.

Next, we provide a technical overview of our quantum isotropifier, which builds on the quantum multivariate mean estimation framework of \cite{cornelissen2022near}.
We begin by considering a simplified scenario where the random variable $X$ is bounded and satisfies $\|X\| \leq 1$. In this case, \cite{cornelissen2022near} first implements a directional mean oracle that approximately maps $\ket{g}$ to $e^{i\langle g, \mathbb{E}[X] \rangle} \ket{g}$ for a query $g \in \mathbb{R}^d$. This oracle requires only a constant number of queries to the quantum sampling oracle $\mathcal{O}_X$ provided that $|\mathbb E\langle g, X \rangle| \leq \|\mathbb{E}[X]\|$. To estimate $\mathbb{E}[X]$ using this directional mean oracle, \cite{cornelissen2022near} applies it to the superposition $\sum_{g \in G_m^{\otimes d}} \ket{g}$, where $G_m^{\otimes d}$ is a $d$-dimensional grid defined as
\begin{align*}
G_m=\left\{\frac{k}{m}-\frac{1}{2}+\frac{1}{2m}:k\in\{0,\ldots,m-1\}\right\}\subset\left(-\frac{1}{2},\frac{1}{2}\right).
\end{align*}
The resulting quantum state approximately decomposes as a tensor product
\begin{align*}
\bigotimes_{j=1,\ldots,d}\bigg(\sum_{ g_j\in G_m} e^{i g_j\mathbb{E}[X_j]}\ket{g_j}\bigg).
\end{align*}
Then, phase estimation can be performed independently in each dimension to estimate each coordinate of $\mathbb{E}[X]$, similar to the quantum gradient estimation algorithm in~\cite{jordan2005fast}. 

The error in this procedure has two parts: the error in the quantum directional mean oracle and the error from quantum phase estimation. The first part of the error arises as the directional mean oracle is accurate only for grid points $g\in G_m^d$ with $|\langle g, \E[X] \rangle| \leq \|\E[X]\|$. In Section~\ref{sec:quantum-bounded}, we provide an improved error analysis, showing that only an exponentially small fraction of $g \in G_m^{\otimes d}$ do not satisfy this condition. Hence, in the analysis we can effectively replace the directional mean oracle by a perfect one that performs the map $\ket{g}\to e^{i\langle g,\E[X_j]\rangle}\ket{g}$
exactly for all $g\in G_m^d$, and the resulting quantum state only has an exponentially small change in trace distance. 

As for the second part of the error, we note that while the phase estimation procedures in different coordinates are independent, i.e., for any $j \neq k$, the estimates $\hat{\mu}_j$ and $\hat{\mu}_k$ of $\mathbb{E}[X_j]$ and $\mathbb{E}[X_k]$ are sampled independently from two distributions, their biases may still contribute positively in certain directions. Consequently, even if each $\hat{\mu}_j$ has a bounded error satisfying $|\hat{\mu}_j - \mathbb{E}[X_j]| \leq \epsilon$, there may exist a unit vector $u \in \mathbb{R}^d$ such that $|\langle \hat{\mu} - \mathbb{E}[X], u \rangle| = \Theta(\epsilon \sqrt{d})$ which is larger than the per-coordinate guarantee by a factor of $\sqrt{d}$.
In this work, we demonstrate that the additional $\sqrt{d}$ overhead can be avoided by replacing the original phase estimation with the boosted unbiased phase estimation algorithm from \cite{van2023quantum}. For each coordinate $j$, this algorithm produces an unbiased estimate $\hat{\mu}_j$ that has bounded error $|\hat{\mu}_j - \mathbb{E}[X_j]| \leq \epsilon$ with high probability. Consequently, for any unit vector $u \in \mathbb{R}^d$, the error $\langle \hat{\mu} - \mathbb{E}[X], u \rangle$ can be approximated as a weighted sum of zero-mean bounded random variables, which follows a sub-Gaussian distribution with variance $\Theta(\epsilon^2)$~\cite{vershynin2018high}. This implies that $\hat{\mu}$ not only is unbiased but also has isotropic noise.

We then extend this result to unbounded random variables, as detailed in \texttt{QUnbounded} (Algorithm~\ref{algo:QUnbounded}) in Section~\ref{sec:quantum-unbounded}. Following a similar approach to \cite{cornelissen2022near}, we decompose $X$ into a sequence of truncated bounded random variables and estimate each one independently. We show that the isotropic noise property is maintained throughout this process. Despite our use of the unbiased phase estimation subroutine~\cite{van2023quantum}, the output of \texttt{QUnbounded} may still be biased due to truncations. 
Notably, while \cite{sidford2024quantum} developed a general debiasing scheme for quantum mean estimation algorithms that treats them as black-box procedures, directly applying their scheme to \texttt{QUnbounded} would compromise the isotropic noise property. As detailed in Section~\ref{sec:quantum-MLMC}, we utilize the multi-level Monte Carlo (MLMC) technique~\cite{giles2015multilevel} to address this issue. Specifically, we develop a modified version of the MLMC variants introduced in~\cite{blanchet2015unbiased, asi2021stochastic, sidford2024quantum} that preserves isotropic noise while maintaining unbiasedness. 

To illustrate our approach, suppose we want an unbiased estimate whose error is smaller than $\epsilon$ in any direction with high probability. Let $\hat{\mu}^{(j)}$ denote the estimate obtained by running \texttt{QUnbounded} to accuracy $\beta_j \epsilon$, i.e., $|\langle \hat{\mu}^{(j)}-\E[X],u\rangle|\leq \beta_j\epsilon$ with high probability, where $\beta_j$ is a chosen coefficient. Then, our new estimator $\hat{\mu}$ is:
\begin{align*}
\text{Draw }j\sim\mathrm{Geom}\Big(\frac{1}{2}\Big)\in\mathbb{N}, \text{ compute }\hat{\mu}\leftarrow\hat{\mu}^{(0)}+2^j(\hat{\mu}^{(j)}-\hat{\mu}^{(j-1)}).
\end{align*}
This estimator is unbiased as long as $\lim_{j\to\infty} \beta_j = 0$, given that%
\begin{align*}
\mathbb{E}[\hat{\mu}]=\E[\hat{\mu}^{(0)}]+\sum_{j=1}^{\infty}\P\{J=j\}2^j(\E[\hat{\mu}^{(j)}]-\E[\hat{\mu}^{(j-1)}])
=\lim_{j\to\infty}\E[\hat{\mu}^{(j)}]=\E[X].
\end{align*}
Moreover, the expected query complexity of the new estimator $\hat{\mu}$ is larger than that of $\hat{\mu}^{(0)}$, which is of order $\tilde{O}(\sigma/\epsilon)$, by a multiplicative factor of
\[
1+\sum_{j=1}^{\infty}\P\{J=j\}(\beta_j^{-1}+\beta_{j-1}^{-1})=1+\sum_{j=1}^{\infty}2^{-j}(\beta_j^{-1}+\beta_{j-1}^{-1}).
\]
This factor is a constant if we choose $\beta_j=2^{-j+C\cdot\log j}$ for any $C>1$. Furthermore, we show later that $\hat{\mu}$ also satisfies $|\langle\hat{\mu}-\E[X],u\rangle|\leq O(\epsilon)$ with high probability as long as $C$ is a constant, preserving the isotropic noise property. In our full algorithm, Algorithm~\ref{algo:unbiased-QUnbounded} in Section~\ref{sec:quantum-MLMC}, we choose $C=2$ for simplicity.%

\section{Upper bounds for SCO under ISGO, ESGO, and VSGO oracles}
\label{sec:nice-noise}

In this section, we prove an upper bound for Problem~\ref{prob:SCO} given a $(\sigmaI, \delta)$-ISGO in Theorem~\ref{thm:ISGO-upper-bound}, and then instantiate the result to achieve upper bounds for Problem~\ref{prob:SCO} given either a $\sigmaE$-ESGO or $\sigmaV$-VSGO in Corollaries \ref{cor:sub-exp-upper-bound} and \ref{cor:VSGO-upper} respectively. Given an ISGO, our algorithm for Problem~\ref{prob:SCO} proceeds in two stages. In Section~\ref{subsec:candidates-via-cutting-plane}, we give a stochastic cutting plane method which utilizes a $(\sigmaI, \delta)$-ISGO to return a finite set of points with the guarantee that at least one of the points is $\epsilon$-optimal. Then in Section~\ref{subsec:opt-finding-finite-set}, we show how to use a $(\sigmaI, \delta)$-ISGO to obtain an approximate minimum among this finite set of points. We note that all results and discussion in Sections \ref{subsec:candidates-via-cutting-plane} and \ref{subsec:opt-finding-finite-set} are stated in the context of solving Problem~\ref{prob:SCO} given a $(\sigmaI, \delta)$-ISGO, unless we explicitly specify otherwise. Finally, we put everything together and prove Theorem~\ref{thm:ISGO-upper-bound} in Section~\ref{subsec:proofs-of-upper-bounds}.

\subsection{Finding candidate solutions via a stochastic cutting plane method}
\label{subsec:candidates-via-cutting-plane}

In this subsection, we show how to obtain a finite set of points with the guarantee that at least one of them is $\epsilon$-optimal via a stochastic cutting plane method in the setting of Problem~\ref{prob:SCO} given a $(\sigmaI, \delta)$-ISGO. Cutting plane methods solve feasibility problems, for which we will use the formulation of {\cite[Definition 8.1.2]{sidford2024optimization}}, except we translate from distance bounds in the $\ell_\infty$-norm to distance bounds in the $\ell_2$-norm.

\restateFeasibilityProblem*

We give a standard query complexity for the $(R', r)$-feasibility problem in the following proposition. For completeness, we also give a standard short proof using the center of gravity cutting plane method, but we emphasize that our results are agnostic to the choice of cutting plane method used to prove Proposition~\ref{prop:standard-cutting-plane-guarantee}. See, e.g., \cite{vaidya1989new,lee2015faster,jiang2020improved} for other cutting plane methods with similar query complexities (and which may have improved runtimes).

\begin{proposition}
	\label{prop:standard-cutting-plane-guarantee}
There exists an $O(d \log (R' / r))$-algorithm for the $(R', r)$-feasibility problem.
\end{proposition}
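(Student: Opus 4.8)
The plan is to implement the classical center-of-gravity cutting plane method and show it terminates within $O(d \log(R'/r))$ queries. The algorithm maintains a convex body $P_0 \supseteq P_1 \supseteq \cdots$ with $P_0 \defeq B_2(R')$. At step $t$, I would query the halfspace oracle at $x_t \defeq $ the centroid (center of gravity) of $P_{t-1}$, receive $g_{x_t} \neq 0$, and update $P_t \defeq P_{t-1} \cap \half(g_{x_t}, g_{x_t}^\top x_t)$. By construction, $P_t = B_2(R') \cap \bigcap_{s \in [t]} \half(g_{x_s}, g_{x_s}^\top x_s)$, so it suffices to show that after $T = O(d \log(R'/r))$ steps, $P_T$ contains no ball of radius $r$; I would terminate and output $x_1, \dots, x_T$ at that point. (A minor technical point: one should address the case where the centroid is hard to compute exactly or $P_{t-1}$ becomes lower-dimensional, but since the proposition is only about query complexity and the paper explicitly says the choice of cutting plane method is immaterial, an idealized center-of-gravity analysis suffices; alternatively one could cite \cite{vaidya1989new,lee2015faster,jiang2020improved}.)

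The key analytic ingredient is Gr\"unbaum's inequality: for any convex body $K \subset \R^d$ with centroid $c$ and any halfspace $H$ through $c$, $\vol(K \cap H) \le (1 - 1/e)\vol(K)$. Since $x_t$ is the centroid of $P_{t-1}$ and $\half(g_{x_t}, g_{x_t}^\top x_t)$ is exactly a halfspace whose boundary passes through $x_t$, Gr\"unbaum gives $\vol(P_t) \le (1 - 1/e)\vol(P_{t-1})$, hence $\vol(P_T) \le (1 - 1/e)^T \vol(B_2(R'))$. On the other hand, if $P_T$ contained a ball $B_2(r, z)$, then $\vol(P_T) \ge \vol(B_2(r)) = (r/R')^d \vol(B_2(R'))$. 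Combining these, a ball of radius $r$ can only persist if $(1-1/e)^T \ge (r/R')^d$, i.e. $T \le \frac{d \log(R'/r)}{\log(1/(1-1/e))} = O(d \log(R'/r))$. So choosing $T \defeq \lceil C d \log(R'/r) \rceil$ for a suitable absolute constant $C > 1/\log(e/(e-1))$ forces $\vol(P_T) < \vol(B_2(r))$, so $P_T$ cannot contain any radius-$r$ ball, which is exactly the required feasibility guarantee with $\Tq = O(d \log(R'/r))$.

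I do not anticipate a genuine obstacle here — this is a textbook argument — but the step requiring the most care is making sure the reduction is stated cleanly: the feasibility problem (Definition~\ref{def:feasibility-problem}) asks us to query at points in $B_2(R')$ (the centroids are automatically in $B_2(R') \supseteq P_{t-1}$) and to certify that the intersection contains no radius-$r$ ball, which is precisely what the volume lower bound $\vol(B_2(r)) > \vol(P_T)$ rules out. One should also note $R' \ge r$ ensures $\log(R'/r) \ge 0$ so $T$ is well-defined and positive. If one wants to avoid invoking Gr\"unbaum's lemma as a black box, it has a short self-contained proof via symmetrization (reducing to the cone case), but citing it is cleanest for a "standard short proof."
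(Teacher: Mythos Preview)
Your proposal is correct and follows essentially the same approach as the paper's proof: both use the center-of-gravity method, invoke Gr\"unbaum's inequality to obtain $\vol(P_T) \le (1-1/e)^T \vol(B_2(R'))$, and conclude by comparing against the volume $(r/R')^d \vol(B_2(R'))$ of a radius-$r$ ball. Your write-up is in fact more detailed than the paper's, which leaves the final comparison implicit.
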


\begin{proof}
We apply the so-called center of gravity method, e.g., \cite{bental2023OptimizationIII}. For iterations $t = 1, 2, \dots$, the center of gravity method queries the halfspace oracle at the origin for $t = 1$ (the center of gravity of $B_2(\Rprime)$), and at the center of gravity of the set $B_2(\Rprime) \bigcap_{k \in [t - 1]} \half(g_{x_k}, g_{x_k}^\top x_k)$ for $t > 1$, where $x_k \in \R^d$ denotes the iterate at step $k$ and $g_{x_k} \in \R^d$ denotes the halfspace queried at $x_k$ during step $k$. Recall that the volume of an $d$-dimensional Euclidean ball (equivalently, $\ell_2$-ball) of radius $\beta > 0$ is $h(d) \cdot \beta^d$, where $h$ is some function of the dimension $d$ which won't matter in the following. Then letting $\vol(\cdot)$ denote the volume of a set, we have $\vol(B_2(\Rprime)) = h(d) \cdot (\Rprime)^d$, and the volume of any Euclidean ball of radius $r$ is $h(d) \cdot r^d$. Letting $S_t \defeq B_2(\Rprime) \bigcap_{k \in [t]} \half(g_{x_k}, g_{x_k}^\top x_k)$ for $t \in \N$, Gr{\"u}nbaum's theorem \cite{grunbaum1960partitions} yields $\vol(S_t) \le (1 - 1 / e)^t \cdot \vol(B_2(\Rprime))$, and the result follows.
\end{proof}

It is well known that solving the feasibility problem where the halfspace oracle is given by the negative gradient of $f$ suffices to obtain a finite set of points such that at least one is $\epsilon$-optimal~\cite{nemirovski1994efficient,lee2015faster}. \cite{sidford2024quantum} demonstrated that this is still possible with stochastic estimates of the gradient, provided that $- \nabla f$ is approximated to sufficiently high accuracy in the $\ell_2$-norm with high probability at each iteration by averaging over multiple samples. Our key observation is that the analysis of \cite{sidford2024quantum} mainly relies on controlling the error of the gradient estimator in a single direction, namely the one-dimensional subspace spanned by $x_t - \xopt$, where $x_t$ is the current iterate. A $(\sigma, \delta)$-ISGO with $\delta$ sufficiently small allows us to control this error with fewer samples than, for example, a $\sigma$-VSGO (Definition~\ref{def:VSGO}), allowing for savings of up to a multiplicative $d$ factor in the complexity of estimating the gradient at each step when compared to a $\sigma$-VSGO or $\sigma$-BSGO.

Toward formalizing this discussion, we first define a marginal version of the $\eta$-approximate gradient oracle specified in \cite[Definition 5]{sidford2024quantum}. Note that this definition still allows us to control the error of the estimator in $\ell_2$-norm. This will be necessary to obtain our guarantee, but we will only need to weakly control this error which can be polynomial in $d$.

\restateMAGO*

Next, we show that an $(\eta, \eta \sqrt{d})$-MAGO can be implemented with high probability using a $(\sigmaI, \delta)$-ISGO without knowledge of the second argument, the directional input $u$.

\restateImplementMAGONormBound*

\begin{proof}
For $K \in \N$ to be chosen later, consider the estimator $\Zbar \defeq \frac{1}{K} \sum_{k \in [K]} Z_k$ for $Z_1, \dots, Z_K \simiid \ISGO(x)$, which does not require knowledge of $u$. Let $\inbraces{u_\ell}_{\ell \in [d]}$ denote any orthonormal basis of $\R^d$ such that $u_1 = u / \norm{u}$, and define for all $\ell \in [d]$:
\begin{align*}
	\Ybar^{(\ell)} \defeq \frac{1}{K} \sum_{k \in [K]} Y_k^{(\ell)}, ~~\text{where}~~ Y_k^{(\ell)} \defeq \inangle*{ Z_k - \grad f(x), u_\ell} \text{ for $k \in [K]$}.
\end{align*}
Note that $\E Y\ls_k = 0$, and $\abs*{Y_k\ls} \le \sigmaI / \sqrt{d}$ for all $(k, \ell) \in [K] \times [d]$ with probability at least $1 - \delta d K$ by Definition~\ref{def:ISGO} and a union bound. Then conditioning on the latter event $\event$, Hoeffding's inequality \cite[Proposition 2.5]{wainwright2019statstextbook} yields for all $\ell \in [d]$:
\begin{align*}
	\P \insquare*{  \abs*{ \Ybar\ls } \ge t \mid \mathcal{E}} \le 2 \exp \inparen*{\frac{- K d t^2}{2 \sigmaI^2}} ~~\text{for all $t \ge 0$.}
\end{align*}
Choosing $K = O \inparen*{ \frac{\sigmaI^2}{d \eta^2} \log (2 d / \xi) + 1}$ gives $\P \insquare*{\abs*{\Ybar\ls} \ge \eta \mid \event} \le \xi / d$ for all $\ell \in [d]$, in which case 
\begin{align*}
	\P \underbrace{\insquare*{\sup_{\ell \in [d]} \abs*{\inangle{\Zbar - \grad f(x), u_\ell}} \le \eta}}_{\eqdef \event'} = \P \insquare*{\sup_{\ell \in [d]} \abs*{\Ybar\ls} \le \eta \mid \event} \cdot \P\insquare*{\event} \ge (1 - \xi)(1 - \delta d K) \ge 1 - \xi - \delta d K
\end{align*}
by a union bound. Finally, note that the event $\event'$ implies the first condition in \eqref{eq:MAGO-directional-property} since then
\begin{align*}
	\norm*{\Zbar - \grad f(x)} = \sqrt{ \sum_{\ell \in [d]} \inangle{\Zbar - \grad f(x), u_\ell}^2 } \le \eta \sqrt{d},
\end{align*}
and the second condition in \eqref{eq:MAGO-directional-property} follows because recall we chose $u_1 = u / \norm{u}$.
\end{proof}

Next, we show how to obtain a finite set of points such that at least one is $\epsilon$-optimal using an appropriate MAGO. Critically, the second argument to the MAGO must be allowed to depend on $\xopt$. However, this does not pose an issue when we ultimately instantiate the MAGO queries via ISGO queries since doing so does not require any knowledge of the second argument to the MAGO, per Lemma~\ref{lem:implement-MAGO-w-norm-bound}.

\begin{lemma}[Obtaining candidate solutions via the feasibility problem]
	\label{lem:candidate-solutions-via-MAGO}
Suppose $R', r > 0$ are such that $B_2(r, \xopt) \subseteq B_2(R')$ and $f(z) \le f(\xopt) + \epsilon / 2$ for all $z \in B_2(r, \xopt)$. Then given an $(\eta \defeq  \frac{\epsilon}{8 R'}, \Gamma \defeq \eta \sqrt{d})$-MAGO $\MAGO(\cdot, \cdot)$ where only the second argument can depend on $\xopt$ and a $\Tq$-algorithm for the $(R', r)$-feasibility problem, and additionally supposing $r \le \epsilon / (4 \Gamma)$, we can compute a finite set of points $S \subseteq B_2(R')$ with $|S| = \Tq$ such that $\min_{x \in S} f(x) \le f(\xopt) + \epsilon$.
\end{lemma}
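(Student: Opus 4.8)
The plan is to run the given $\Tq$-algorithm for the $(R', r)$-feasibility problem (Definition~\ref{def:feasibility-problem}) with the halfspace oracle at a query point $x \in B_2(R')$ instantiated by $g_x \defeq -\MAGO(x, x - \xopt)$; this is a legitimate instantiation since the MAGO (hence the halfspace oracle) is permitted to be randomized, and, per the lemma hypothesis, the second argument of the MAGO may depend on $\xopt$. We take $S$ to be the set of points queried during the run, padded with arbitrary points of $B_2(R')$ (e.g.\ the origin) so that $|S| = \Tq$; clearly $S \subseteq B_2(R')$. The $\Tq$-algorithm guarantee ensures that, for the realized oracle responses, $B_2(R') \cap \bigcap_{t} \half(g_{x_t}, g_{x_t}^\top x_t)$ contains no ball of radius $r$; since $B_2(r, \xopt) \subseteq B_2(R')$, it in particular fails to contain $B_2(r, \xopt)$, so there is an iterate $x_t$ and a point $z \in B_2(r, \xopt)$ with $g_{x_t}^\top z < g_{x_t}^\top x_t$. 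Writing $g_t \defeq \MAGO(x_t, x_t - \xopt)$, this reads $\langle g_t, x_t - z\rangle < 0$.

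The second step is the convexity bound. Starting from $f(x_t) \le f(z) + \langle \grad f(x_t), x_t - z\rangle$, adding and subtracting $g_t$ and splitting $x_t - z = (x_t - \xopt) + (\xopt - z)$ gives
\[
    f(x_t) \le f(z) + \langle g_t, x_t - z\rangle + \langle \grad f(x_t) - g_t, x_t - \xopt\rangle + \langle \grad f(x_t) - g_t, \xopt - z\rangle .
\]
The first bracket is negative. For the second, the directional guarantee of Definition~\ref{def:MAGO} applied with $u = x_t - \xopt$ bounds it by $\eta \norm{x_t - \xopt} \le 2 R' \eta = \epsilon/4$, using $x_t \in B_2(R')$ and $\xopt \in B_2(r, \xopt) \subseteq B_2(R')$. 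For the third, Cauchy--Schwarz together with the norm bound of Definition~\ref{def:MAGO} and $\norm{\xopt - z} \le r$ gives $\le \Gamma r \le \epsilon/4$ by the hypothesis $r \le \epsilon/(4\Gamma)$. Combined with $f(z) \le f(\xopt) + \epsilon/2$, this yields $f(x_t) \le f(\xopt) + \epsilon$, hence $\min_{x \in S} f(x) \le f(x_t) \le f(\xopt) + \epsilon$, as desired.

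The only wrinkle is that the feasibility oracle must return a nonzero vector, whereas $\MAGO(x, x - \xopt)$ could in principle equal $0$; the fix is to output a fixed nonzero vector (say $e_1$) in that event, and to observe separately that if $\MAGO$ ever returns $0$ at a queried point $x_t$ then Definition~\ref{def:MAGO} forces $|\langle \grad f(x_t), \widetilde{u}\rangle| \le \eta$ for $\widetilde{u} = (x_t - \xopt)/\norm{x_t - \xopt}$, so taking $z = \xopt$ in the convexity bound already gives $f(x_t) \le f(\xopt) + 2 R' \eta \le f(\xopt) + \epsilon$ with $x_t \in S$. Beyond this bookkeeping nothing is genuinely hard; the one conceptual point worth flagging --- and the reason the argument is routed through the MAGO rather than trying to control the estimation error directly along $\xopt - z$ --- is that $z$ is produced by the cutting plane method and depends on the realized halfspaces $g_t$, so its direction cannot be fixed in advance, which is precisely why we tightly control the error of $g_t$ only along the fixed direction $x_t - \xopt$ and absorb the leftover term $\langle \grad f(x_t) - g_t, \xopt - z\rangle$ via the weak (possibly $\mathrm{poly}(d)$) norm bound $\Gamma$ against the tiny radius $r$.
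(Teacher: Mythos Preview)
Your proof is correct and follows essentially the same approach as the paper: instantiate the halfspace oracle with $-\MAGO(x, x-\xopt)$, extract an iterate $x_t$ and point $z \in B_2(r,\xopt)$ violating its halfspace, and bound $f(x_t)$ via the same three-term decomposition $\langle g_t, x_t - z\rangle + \langle \grad f(x_t)-g_t, x_t-\xopt\rangle + \langle \grad f(x_t)-g_t, \xopt - z\rangle$, handling the $g_t = 0$ corner case separately. The only cosmetic difference is that the paper ``terminates immediately'' when $g_t = 0$ whereas you patch the oracle with $e_1$ and continue; both are fine.
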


\begin{proof}
We run the $\Tq$-algorithm for the $(R', r)$-feasibility problem with the halfspace oracle at a point $x \in \R^d$ given by $- \MAGO(x, x - \xopt)$. By definition, this produces a sequence of points $S \defeq \inbraces{x_t \in B_2(R')}_{t \in [\Tq]}$ such that $B_2(\Rprime) \bigcap_{t \in [\Tq]} \half(- g_t, - g_t^\top x_t)$ does not contain a ball of radius $r$, where $- g_t$ denotes the output of the halfspace oracle at the $t$-th step (i.e., $g_t$ was the result of the oracle call $\MAGO(x_t, x_t - \xopt)$). (If the $\Tq$-algorithm terminates with less than $\Tq$ queries, we can pad $S$ with duplicates.) Note that we can terminate immediately if the halfspace oracle outputs $g_t = 0$ at some step $t$ since then
\begin{align*}
	f(x_t) - f(\xopt) \le \inangle*{\grad f(x_t), x_t - \xopt} 
	= \inangle*{\grad f(x_t) - g_t, x_t - \xopt} 
	\le 2 R' \inangle*{\grad f(x_t) - g_t, \frac{x_t - \xopt}{\norm{x_t - \xopt}}} \le \epsilon
\end{align*}
by convexity, the fact that $x_t, \xopt \in B_2(R')$, and the definition of $\MAGO(x_t, x_t - \xopt)$. Then supposing $g_t \ne 0$ for all $t \in [\Tq]$, it must be the case that at some iteration $t \in [\Tq]$, there exists $z \in B_2(r, \xopt)$ such that $- g_t^\top z < - g_t^\top x_t$. 
	Then by convexity
	\begin{align*}
		f(x_t) &\le f(z) + \inangle*{\grad f(x_t), x_t - z} \\
		&= f(z) + \underbrace{\inangle*{g_t, x_t - z}}_{\circled{1}} + \underbrace{\inangle*{\grad f(x_t) - g_t, x_t - \xopt}}_{\circled{2}} 
		        + \underbrace{\inangle*{\grad f(x_t) - g_t, \xopt - z}}_{\circled{3}} \le f(\xopt) + \epsilon,
	\end{align*}
	where the last inequality followed because
	$\circled{1} < 0$ by the definition of $z$; we have $\circled{2} \le \eta \norm{x_t - \xopt} \le 2 R' \eta \le \epsilon / 4$ by the definition of $\MAGO(x_t, x_t - \xopt)$; we have $\circled{3} \le \Gamma r \le \epsilon / 4$ by Cauchy-Schwarz, the definition of $\MAGO(x_t, x_t - \xopt)$, and the additional assumption on $r$; and finally $z$ is $\epsilon / 2$-optimal.
\end{proof}

Finally, we give our main result for this subsection:

\begin{lemma}[Obtaining candidate solutions via an ISGO]
	\label{lem:cutting-plane-with-ISGO}
	For $T = O(d \log (d + RL / \epsilon))$, there exists an algorithm which returns a finite set of points $S \subseteq B_2(2R)$ with $|S| = T$ and $\min_{x \in S} f(x) \le f(\xopt) + \epsilon$ using
	\begin{align*}
		M = O \inparen*{T \inparen*{ \frac{R^2 \sigmaI^2}{d \epsilon^2} \logs (2d T / \xi) + 1}} ~~\text{queries to a $(\sigmaI, \delta)$-ISGO $\ISGO(\cdot)$}
	\end{align*}
	and with success probability at least $1 - \xi - \delta d M$.
\end{lemma}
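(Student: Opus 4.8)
The plan is to assemble the three ingredients developed in this subsection: the generic $O(d\log(R'/r))$-query cutting plane algorithm for the feasibility problem (Proposition~\ref{prop:standard-cutting-plane-guarantee}), the reduction of Problem~\ref{prob:SCO} to the feasibility problem through a MAGO (Lemma~\ref{lem:candidate-solutions-via-MAGO}), and the implementation of a single MAGO query via an ISGO without access to its directional argument (Lemma~\ref{lem:implement-MAGO-w-norm-bound}). First I would dispose of the trivial regime: if $\epsilon \ge RL$ then, since $f$ is $L$-Lipschitz and $\norm{\xopt}\le R$, the origin satisfies $f(0)-f(\xopt)\le L\norm{\xopt}\le RL\le\epsilon$, so $S=\{0\}$ (padded to size $T$ by duplication) works with no oracle queries. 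Hence assume $\epsilon<RL$. Set $R'\defeq 2R$, $\eta\defeq \epsilon/(8R')=\epsilon/(16R)$, $\Gamma\defeq\eta\sqrt d=\epsilon\sqrt d/(16R)$, and $r\defeq\min\{\epsilon/(2L),\,\epsilon/(4\Gamma)\}=\min\{\epsilon/(2L),\,4R/\sqrt d\}$. With this choice the hypotheses of Lemma~\ref{lem:candidate-solutions-via-MAGO} hold: $r\le \epsilon/(2L)<R/2<R$, so $B_2(r,\xopt)\subseteq B_2(2R)=B_2(R')$; by $L$-Lipschitzness every $z\in B_2(r,\xopt)$ has $f(z)\le f(\xopt)+Lr\le f(\xopt)+\epsilon/2$; and $r\le\epsilon/(4\Gamma)$ by construction. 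Moreover $R'/r=\max\{4RL/\epsilon,\,\sqrt d/2\}$, so Proposition~\ref{prop:standard-cutting-plane-guarantee} yields a $T$-algorithm for the $(R',r)$-feasibility problem with $T=O(d\log(R'/r))=O(d\log(d+RL/\epsilon))$.

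Next, I would run this $T$-algorithm for the $(R',r)$-feasibility problem, answering each of its (at most $T$) halfspace queries at a point $x\in B_2(R')$ by $-\MAGO(x,x-\xopt)$, where the MAGO call is implemented via Lemma~\ref{lem:implement-MAGO-w-norm-bound} with the above $\eta$ and failure parameter $\xi'\defeq\xi/T$. Each such call costs $K\defeq O\!\bigl(\tfrac{\sigmaI^2}{d\eta^2}\log(2d/\xi')+1\bigr)=O\!\bigl(\tfrac{R^2\sigmaI^2}{d\epsilon^2}\log(2dT/\xi)+1\bigr)$ ISGO queries and, by the lemma, behaves as a genuine $(\eta,\Gamma)$-MAGO query except with probability at most $\xi'+\delta d K$; crucially, the implementation does not need the direction $u=x-\xopt$, so the dependence of this argument on the unknown $\xopt$ is harmless. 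Union bounding over the at most $T$ queries, all MAGO calls behave correctly simultaneously with probability at least $1-T(\xi'+\delta d K)=1-\xi-\delta d M$, where $M\defeq TK$ is the total ISGO query count, matching the claimed bound. On this event, Lemma~\ref{lem:candidate-solutions-via-MAGO} applies directly (its $\eta$ is exactly $\epsilon/(8R')$, our $\eta$) and produces $S\subseteq B_2(R')=B_2(2R)$ with $|S|=T$ (padding with duplicates if the cutting plane method halts early or outputs $g_t=0$ at some step) and $\min_{x\in S}f(x)\le f(\xopt)+\epsilon$.

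The routine parts are the arithmetic verifying the parameter choices (in particular that $\tfrac{\sigmaI^2}{d\eta^2}=\Theta(\tfrac{R^2\sigmaI^2}{d\epsilon^2})$ and that $\log(R'/r)=O(\log(d+RL/\epsilon))$) and the union bound bookkeeping. The one point that deserves care is that the sequence of feasibility-oracle queries is \emph{adaptive} — the iterates $x_1,\dots,x_T$ depend on the random MAGO outputs from earlier steps — so one must justify that the union bound still applies; it does, because Lemma~\ref{lem:implement-MAGO-w-norm-bound} provides a per-call failure bound that is uniform over the query point and each ISGO call uses fresh independent randomness, so conditioning on the history does not change the per-step failure estimate. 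A secondary bookkeeping point is early termination or zero halfspace outputs, but this is already handled inside Lemma~\ref{lem:candidate-solutions-via-MAGO} by padding $S$, so it carries over unchanged.
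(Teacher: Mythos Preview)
Your proposal is correct and follows essentially the same approach as the paper's proof: set $R'=2R$, $\eta=\epsilon/(8R')$, $\Gamma=\eta\sqrt d$, $r=\min\{\epsilon/(2L),\epsilon/(4\Gamma)\}$, invoke Lemma~\ref{lem:candidate-solutions-via-MAGO} together with Proposition~\ref{prop:standard-cutting-plane-guarantee} to obtain $T=O(d\log(d+RL/\epsilon))$ MAGO queries, then implement each via Lemma~\ref{lem:implement-MAGO-w-norm-bound} and union bound. Your write-up is in fact more explicit than the paper's on the parameter arithmetic and on the adaptivity of the query sequence, but the argument is the same.
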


\begin{proof}
	We first apply Lemma~\ref{lem:candidate-solutions-via-MAGO} with $R' \defeq 2R$ and $r \defeq \min \inbraces{\frac{\epsilon}{2L}, \frac{\epsilon}{4 \Gamma}}$ (with $\eta \defeq \frac{\epsilon}{8 R'}$ and $\Gamma \defeq \eta \sqrt{d}$ as in Lemma~\ref{lem:candidate-solutions-via-MAGO}). Note that we can assume $\epsilon \in (0, R L)$ without loss of generality as otherwise the origin is $\epsilon$-optimal, in which case $R \ge r$, implying $B_2(r, \xopt) \subseteq B_2(R')$. Furthermore, every point in $B_2(r, \xopt)$ is $\epsilon/2$-optimal given that $f$ is $L$-Lipschitz.
	Thus, combining Lemma~\ref{lem:candidate-solutions-via-MAGO} with Proposition~\ref{prop:standard-cutting-plane-guarantee}, we can obtain $S$ with
	\begin{align*}
		T = O(d \log ( R' / r)) = O \inparen*{ d \log (R (L + \Gamma) / \epsilon)} = 
		O \inparen*{ d \log (d + R L / \epsilon)}
	\end{align*}
	queries to a $(\eta =  \frac{\epsilon}{8 R'}, \Gamma = \eta \sqrt{d})$-MAGO. The result then follows by instantiating each MAGO query per Lemma~\ref{lem:implement-MAGO-w-norm-bound}, 
	and the final success probability follows from a union bound. 
\end{proof}

\subsection{Approximate minimum finding among a finite set of points}
\label{subsec:opt-finding-finite-set}

Here we show how to use an ISGO to obtain an approximate minimum among a finite set of points $S \subseteq \R^d$. (As a reminder, Section~\ref{subsec:opt-finding-finite-set} is stated in the context of Problem~\ref{prob:SCO} given access to a $(\sigmaI, \delta)$-ISGO, unless we specify otherwise.) To do so, we adapt the techniques of \cite[Section 4.2]{sidford2024quantum}, which solves the same problem except with a (quantum) BSGO (recall Definition~\ref{def:BSGO}).\footnote{An alternate procedure for solving this problem via a BSGO was given in \cite[Proposition 3]{jambulapati2024compquerydepthparallel}. However, unlike the procedure of \cite[Section 4.2]{sidford2024quantum}, it is not clear how to adapt their techniques to an ISGO (or even a VSGO). See also \cite{carmon2024adaptivity, asi2024privatestochasticconvexoptimization} for related procedures.} At a high level, the procedure involves iteratively replacing pairs of points $(x, x')$ in $S$ with a point $\xbar$ on the line segment between $x$ and $x'$, with the guarantee that the objective value of $\xbar$ is (approximately) at least as good as that of both $x$ and $x'$. This is done in multiple levels, eventually comparing pairs that were the result of previous comparisons, until only a single point remains. For a pair $(x, x')$, the point $\xbar$ is computed via a binary search procedure on the line segment between them using the ISGO. Critically, analogously to the main result of Section~\ref{subsec:candidates-via-cutting-plane}, a $(\sigma, \delta)$-ISGO allows for up to a $d$-factor savings when implementing this binary search over a $\sigma$-BSGO or $\sigma$-VSGO.

To start, Algorithm~\ref{algo:line-search}, which is stated independently of the context of Problem~\ref{prob:SCO}, gives a procedure for computing an approximate minimum over $[0, 1]$ of a one-dimensional Lipschitz convex function, given access to a sequence of approximate first-order derivatives (see Line~\ref{algline:approx-deriv}). We will ultimately use Algorithm~\ref{algo:line-search} to obtain $\xbar$ for a pair $(x, x')$ as discussed above. Algorithm~\ref{algo:line-search} and its analysis are based on Algorithm 4 in \cite{sidford2024quantum}.

\begin{algorithm2e}
	\caption{\linesearch$(h, \epsilon')$}
	\label{algo:line-search}
	\LinesNumbered
	\DontPrintSemicolon
    \Input{Convex $G$-Lipschitz $h : \R \to \R$, target accuracy $\epsilon' > 0$}
    \Output{$\zbar \in [0, 1]$ s.t. $h(\zbar) \le \min_{z \in [0, 1]} h(z) + \epsprim$}
    $z_\ell \gets 0$, $z_r \gets 1$\;

    \While{$\zr - \zl > \epsprim / G$  \label{algline:line-search-while-loop}}{

		$z_m \gets (z_r + \zl) / 2$ \label{algline:z_m}\;

		Let $\utilde_{z_m}$ be s.t. $\abs*{\utilde_{z_m} - h'(z_m)} \le \epsprim / 4$ \label{algline:approx-deriv}\;

		\lIf{$\abs*{\utilde_{z_m}} \le \epsprim / 4$}{
            \Return $\zbar \gets z_m$ \label{algline:early-return}
        }

		\leIf{$\utilde_{z_m} > 0$}{
        $\zr \gets \zm$ 
        }{
        $\zl \gets \zm$ 
		} \label{algline:new-z_m}

    } 
    
    \Return $\zbar \gets \zl$ \label{algline:line-search-return}

\end{algorithm2e}

\begin{lemma}[Algorithm~\ref{algo:line-search} guarantee]
Given differentiable, convex, $G$-Lipschitz $h\colon\R \to \R$ and $\epsprim > 0$, Algorithm~\ref{algo:line-search} terminates after $O(\logs (G / \epsprim))$ iterations\footnote{An iteration is a single execution of Lines \ref{algline:z_m}--\ref{algline:new-z_m}.} and returns $\zbar \in [0, 1]$ such that $h(\zbar) \le \min_{z \in [0, 1]} h(z) + \epsprim$.
\end{lemma}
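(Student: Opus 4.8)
The plan is to treat termination and correctness separately. For termination, note that any iteration that does not reach the early return on Line~\ref{algline:early-return} replaces $[\zl, \zr]$ by exactly one of its two halves (Line~\ref{algline:new-z_m}), so after $k$ such iterations $\zr - \zl = 2^{-k}$; since the while guard on Line~\ref{algline:line-search-while-loop} fails once $\zr - \zl \le \epsprim/G$, the loop runs at most $O(\logs(G/\epsprim))$ iterations (and none when $G \le \epsprim$, as then $\epsprim/G \ge 1 = \zr - \zl$ initially).

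The first ingredient for correctness is to convert the approximate-derivative guarantee on Line~\ref{algline:approx-deriv} into a statement about $h'(\zm)$ itself. If the early return does not trigger, then $\abs{\utilde_{\zm}} > \epsprim/4$, so $\abs{\utilde_{\zm} - h'(\zm)} \le \epsprim/4$ gives $h'(\zm) > 0$ in the branch $\utilde_{\zm} > 0$ and $h'(\zm) < 0$ in the branch $\utilde_{\zm} \le 0$; and if the early return does trigger, then $\abs{h'(\zm)} \le \abs{\utilde_{\zm}} + \epsprim/4 \le \epsprim/2$.

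The second ingredient is the invariant that at the start of each iteration $\min_{z \in [\zl,\zr]} h(z) = \min_{z \in [0,1]} h(z)$. This holds initially; for the inductive step suppose $h'(\zm) > 0$ (the case $h'(\zm) < 0$ is symmetric, swapping the roles of the endpoints). Since $h'$ is nondecreasing by convexity, $h$ is nondecreasing on $[\zm,\infty)$, so $\min_{z \in [\zm,\zr]} h(z) = h(\zm) \ge \min_{z \in [\zl,\zm]} h(z)$, whence $\min_{z\in[\zl,\zr]} h(z) = \min_{z\in[\zl,\zm]} h(z)$, which is exactly the new interval after the update $\zr \gets \zm$. Finally I would check the two exit points. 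If Line~\ref{algline:early-return} returns $\zbar = \zm$, then for any minimizer $\zopt$ of $h$ over $[0,1]$, convexity gives $h(\zm) \le h(\zopt) + h'(\zm)(\zm - \zopt) \le h(\zopt) + \abs{h'(\zm)} \le h(\zopt) + \epsprim/2$, using $\abs{\zm - \zopt} \le 1$. If Line~\ref{algline:line-search-return} returns $\zbar = \zl$, the invariant furnishes a minimizer $\zopt \in [\zl,\zr]$, so $\abs{\zl - \zopt} \le \zr - \zl \le \epsprim/G$, and $G$-Lipschitzness yields $h(\zl) \le h(\zopt) + G\abs{\zl - \zopt} \le h(\zopt) + \epsprim$. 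In both cases $h(\zbar) \le \min_{z\in[0,1]} h(z) + \epsprim$.

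The only genuinely delicate point, and the one I would be most careful about, is the first ingredient: the $\epsprim/4$ slack in the early-return test is exactly what guarantees the sign of $\utilde_{\zm}$ is a faithful proxy for the sign of $h'(\zm)$ whenever we bisect, so the invariant really is preserved despite only having access to an approximate derivative. Everything else is a routine bisection argument.
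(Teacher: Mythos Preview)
Your proof is correct and follows essentially the same approach as the paper: both argue termination from interval halving, handle the early return via $\abs{h'(\zm)} \le \epsprim/2$ and the convexity inequality, and handle the final return by maintaining an invariant that the interval still contains a global minimizer, using the $\epsprim/4$ slack to certify that $\sign(\utilde_{\zm}) = \sign(h'(\zm))$ whenever bisection occurs. The only cosmetic difference is that the paper tracks a fixed $\zopt \in [\zl,\zr]$ and uses the subgradient inequality $h'(\zm)(\zm - \zopt) \ge h(\zm) - h(\zopt) \ge 0$ for the inductive step, whereas you phrase the invariant as $\min_{[\zl,\zr]} h = \min_{[0,1]} h$ and use monotonicity of $h'$.
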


\begin{proof}
The iteration bound is immediate from the fact that each iteration halves the length of $[\zl, \zr]$. As for correctness, if Algorithm~\ref{algo:line-search} terminates on Line~\ref{algline:new-z_m}, then $\abs*{h'(z_m)} \le \epsprim / 2$ by a triangle inequality, in which case convexity implies for all $z \in [0, 1]$:
\begin{align*}
	 h(z_m) \le h(z) + h'(z_m)(z_m - z) \le h(z) + \abs*{h'(z_m)} \abs*{z_m - z} \le h(z) + \epsprim.
\end{align*}
On the other hand, suppose Algorithm~\ref{algo:line-search} terminates on Line~\ref{algline:line-search-return}. Pick any $\zopt \in \argmin_{z \in [0, 1]} h(z)$, and we claim Algorithm~\ref{algo:line-search} maintains the invariant $\zopt \in [\zl, \zr]$. This follows by induction because the condition $\utilde_{z_m} > 0$ in Line~\ref{algline:new-z_m} implies $\utilde_{z_m} > \epsprim / 4$ due to the failure of the termination condition in Line~\ref{algline:early-return}, in which case $h'(z_m) > 0$ by a reverse triangle inequality. Then
\begin{align*}
	h'(z_m) (z_m - \zopt) \ge h(z_m) - h(\zopt) \ge 0 \implies z_m - \zopt \ge 0 \implies \zopt \in [\zl, z_m].
\end{align*}
The case where $\utilde_{z_m} < 0$ in Line~\ref{algline:new-z_m} is analogous. To conclude, the termination condition of Line~\ref{algline:line-search-while-loop} implies the following in Line~\ref{algline:line-search-return} by the invariant and Lipschitzness:
\begin{align*}
	z_r - z_\ell \le \epsprim / G \implies |z_\ell - \zopt| \le \epsprim / G \implies h(z_\ell) \le h(\zopt) + \epsprim.
\end{align*}
\end{proof}

Before proceeding, we provide a version of Lemma~\ref{lem:implement-MAGO-w-norm-bound} where we don't control the error of the MAGO estimator in $\ell_2$-norm, allowing for a potential logarithmic-factor improvement in the ISGO query complexity needed to implement the MAGO as well as a higher success probability. Technically Lemma~\ref{lem:MAGO-via-ISGO-single-direction} is not necessary to prove our ultimate guarantee Theorem~\ref{thm:ISGO-upper-bound}, where we do not explicitly state polylogarithmic factors for brevity, but we include it for completeness and to make it clear what aspects of the MAGO we actually need for individual lemmas.

\begin{lemma}
	\label{lem:MAGO-via-ISGO-single-direction}
	For any $\delta, \xi \in (0, 1)$; $x, u \in \R^d$; and $\eta > 0$, a query $\MAGO(x, u)$ to an $(\eta, \infty)$-MAGO can be implemented using $K = O \inparen*{ \frac{\sigmaI^2}{d \eta^2} \log (2 / \xi) + 1}$ queries to a $(\sigmaI,\delta)$-ISGO $\ISGO(\cdot)$, with success probability at least $1 - \xi - \delta K$ and without access to the input $u$.
\end{lemma}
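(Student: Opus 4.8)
The plan is to replay the proof of Lemma~\ref{lem:implement-MAGO-w-norm-bound}, but to track the averaged estimator's error only along the single direction $\tilde u \defeq u / \norm{u}$. This removes the need for a union bound over a full orthonormal basis, which is exactly what costs the extra $\log d$ inside $K$ and the extra factor of $d$ in the failure term $\delta d K$; working in one direction replaces these by $\log(2/\xi)$ and $\delta K$ respectively.

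Concretely, for $K \in \N$ to be fixed later I would draw $Z_1, \dots, Z_K \simiid \ISGO(x)$ and output the average $\Zbar \defeq \frac1K \sum_{k \in [K]} Z_k$; note that producing $\Zbar$ requires only $x$, not $u$, which gives the ``without access to $u$'' clause just as in Lemma~\ref{lem:implement-MAGO-w-norm-bound}. For the analysis set $Y_k \defeq \inangle{Z_k - \grad f(x), \tilde u}$ and $\Ybar \defeq \frac1K \sum_{k \in [K]} Y_k = \inangle{\Zbar - \grad f(x), \tilde u}$. Applying Definition~\ref{def:ISGO} to the unit vector $\tilde u$, each $Y_k$ satisfies $\E Y_k = 0$ and $\abs{Y_k} \le \sigmaI / \sqrt{d}$ with probability at least $1 - \delta$, so a union bound over $k \in [K]$ gives an event $\event$ of probability at least $1 - \delta K$ on which $\abs{Y_k} \le \sigmaI / \sqrt d$ for all $k$.

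Conditioning on $\event$, Hoeffding's inequality applied to the bounded zero-mean variables $Y_k$ (as in the proof of Lemma~\ref{lem:implement-MAGO-w-norm-bound}) yields $\P[\abs{\Ybar} \ge t \mid \event] \le 2 \exp(-K d t^2 / (2 \sigmaI^2))$ for all $t \ge 0$, and taking $K = O(\frac{\sigmaI^2}{d \eta^2} \log(2/\xi) + 1)$ makes $\P[\abs{\Ybar} \ge \eta \mid \event] \le \xi$. Intersecting with $\event$, with probability at least $(1-\xi)(1 - \delta K) \ge 1 - \xi - \delta K$ we get $\abs{\inangle{\Zbar - \grad f(x), \tilde u}} \le \eta$, which is precisely the directional condition in Definition~\ref{def:MAGO}; the $\ell_2$ condition there is vacuous because $\Gamma = \infty$. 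I do not expect any real obstacle here—the only things to state carefully are that the estimator $\Zbar$ is oblivious to $u$ (the direction $\tilde u$ appears only in the analysis), and that the failure probability must absorb both the Hoeffding tail ($\xi$) and the boundedness event ($\delta K$).
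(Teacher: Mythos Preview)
The proposal is correct and follows essentially the same approach as the paper: sample $K$ i.i.d.\ copies from the ISGO, average them, use Definition~\ref{def:ISGO} plus a union bound over $k\in[K]$ to get the boundedness event $\event$, then apply Hoeffding's inequality conditionally on $\event$ and combine the two failure probabilities. Your write-up is in fact slightly cleaner than the paper's in that you explicitly use the unit vector $\tilde u = u/\norm{u}$, which is what the MAGO condition in Definition~\ref{def:MAGO} actually requires.
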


\begin{proof}
	Consider the estimator $\Zbar \defeq \frac{1}{K} \sum_{k \in [K]} Z_k$ for $Z_1, \dots, Z_K \simiid \ISGO(x)$, which doesn't require knowledge of $u$. Define 
	\begin{align*}
		\Ybar \defeq \frac{1}{K} \sum_{k \in [K]} Y_k ~~\text{where}~~ Y_k \defeq \inangle*{ Z_k - \grad f(x), u} \text{ for $k \in [K]$}.
	\end{align*}
	Note $\E Y_k = 0$ and $\abs*{Y_k} \le \sigmaI / \sqrt{d}$ for all $k \in [K]$ with probability at least $1 - \delta K$ by Definition~\ref{def:ISGO} and a union bound. Letting $\event$ denote the latter event, an application of Hoeffding's inequality implies $\P \insquare*{\abs*{\Ybar} \ge \eta \mid \event } \le \xi$, in which case
	\begin{align*}
		\P \insquare*{\inangle{\Zbar - \grad f(x), u} \le \eta} = \P \insquare*{\abs*{\Ybar} \le \eta \mid \event } \cdot \P \insquare{\event} \ge 1 - \xi - \delta K.
	\end{align*}
\end{proof}

We now use Algorithm~\ref{algo:line-search} as a subroutine to obtain $\xbar$ for a pair $(x, x')$.

\begin{lemma}
	\label{lem:best-of-two}
For $\epsprim > 0$; $\xi \in (0, 1)$; and $x, x' \in \R^d$ with $D \defeq \norm{x' - x}$, there exists an algorithm which computes $\xbar$, a convex combination of $x$ and $x'$ such that $f(\xbar) \le \min_{\lambda \in [0, 1]} f(x + \lambda(x' - x)) + \epsprim$ using at most 
\begin{align*}
	M = O \inparen*{
		M' \inparen*{ \frac{D^2 \sigmaI^2}{d \epsprim^2} \log (M' / \xi) + 1}
	} ~~\text{for $M' = O ( \log(DL / \epsprim))$}
\end{align*}
queries to a $(\sigmaI, \delta)$-ISGO $\ISGO(\cdot)$, with success probability at least $1 - \xi - \delta M$.
\end{lemma}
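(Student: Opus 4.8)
The plan is to reduce to a one-dimensional convex minimization and apply Algorithm~\ref{algo:line-search} (\linesearch). Define $h\colon[0,1]\to\R$ by $h(\lambda)\defeq f(x+\lambda(x'-x))$; assuming $D>0$ (else the claim is trivial), $h$ is a composition of $f$ with an affine map, hence differentiable and convex, and it is $G$-Lipschitz with $G\defeq DL$ since $f$ is $L$-Lipschitz and $\norm{x'-x}=D$. Its derivative is the directional derivative $h'(\lambda)=\inangle{\grad f(y),x'-x}$ where $y\defeq x+\lambda(x'-x)$. With $u\defeq x'-x$ and $\tilde{u}\defeq u/D$, a query $\MAGO(y,u)$ to an $(\eta,\infty)$-MAGO returns some $g$ with $\abs{\inangle{g-\grad f(y),\tilde{u}}}\le\eta$, so the scalar $D\inangle{g,\tilde{u}}$ satisfies $\abs{D\inangle{g,\tilde{u}}-h'(\lambda)}\le D\eta$. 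Taking $\eta\defeq\epsprim/(4D)$ makes $D\inangle{g,\tilde{u}}$ a valid choice of $\utilde_{z_m}$ in Line~\ref{algline:approx-deriv} of Algorithm~\ref{algo:line-search}, i.e., within $\epsprim/4$ of $h'$ at the queried point.

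Running \linesearch$(h,\epsprim)$ with each approximate derivative supplied this way, the guarantee for Algorithm~\ref{algo:line-search} gives that it halts after $M'=O(\logs (G / \epsprim))=O(\logs (DL / \epsprim))$ iterations and outputs $\bar\lambda\in[0,1]$ with $h(\bar\lambda)\le\min_{\lambda\in[0,1]}h(\lambda)+\epsprim$ (when $DL\le\epsprim$ the while loop is never entered and $x$ itself already suffices). Setting $\xbar\defeq x+\bar\lambda(x'-x)$, a convex combination of $x$ and $x'$, this reads $f(\xbar)\le\min_{\lambda\in[0,1]}f(x+\lambda(x'-x))+\epsprim$, as required.

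It remains to implement the $M'$ approximate-derivative queries via the ISGO and to bound the total query count and the failure probability. By Lemma~\ref{lem:MAGO-via-ISGO-single-direction}, a single $(\eta,\infty)$-MAGO query with $\eta=\epsprim/(4D)$ can be implemented using
\[
K=O\inparen*{\frac{\sigmaI^2}{d\eta^2}\log(2/\xi')+1}=O\inparen*{\frac{D^2\sigmaI^2}{d\epsprim^2}\log(2/\xi')+1}
\]
queries to the $(\sigmaI,\delta)$-ISGO $\ISGO(\cdot)$, succeeding with probability at least $1-\xi'-\delta K$; importantly this requires only the query point $y$, not the direction $u$. I would set $\xi'\defeq\xi/M'$ and take a union bound over the $M'$ iterations: the total ISGO query count is $M=O(M'K)=O\inparen*{M'\inparen*{\frac{D^2\sigmaI^2}{d\epsprim^2}\log(M'/\xi)+1}}$, and the event that all $M'$ MAGO implementations succeed---on which the correctness argument above applies verbatim---occurs with probability at least $1-M'\xi'-\delta M'K\ge1-\xi-\delta M$. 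The argument is mostly bookkeeping once the reduction is in place; the one point needing care is the scaling between the MAGO's guarantee along the \emph{normalized} direction $\tilde{u}$ and the derivative $h'$ along the \emph{un-normalized} direction $x'-x$, which forces $\eta=\epsprim/(4D)$ and hence the $D^2/\epsprim^2$ dependence in $K$, together with tracking the per-query failure probability so that the final bound emerges in the claimed form $1-\xi-\delta M$.
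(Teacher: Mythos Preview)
Your proposal is correct and follows essentially the same approach as the paper: reduce to the one-dimensional line search via $h(\lambda)=f(x+\lambda(x'-x))$, implement each approximate derivative in Algorithm~\ref{algo:line-search} with a single $(\epsprim/(4D),\infty)$-MAGO call, realize each MAGO call via Lemma~\ref{lem:MAGO-via-ISGO-single-direction}, and union bound over the $M'=O(\log(DL/\epsprim))$ iterations with $\xi'=\xi/M'$. The scaling observation you flag (normalized vs.\ un-normalized direction forcing $\eta=\epsprim/(4D)$) is exactly what the paper does as well.
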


\begin{proof} 
	If $D = 0$, we return $x$.
	Otherwise, define $x_\lambda \defeq x + \lambda(x' - x)$ for $\lambda \in \R$, and
let $h(\lambda) \defeq f(x_\lambda)$. Note that $h$ is $DL$-Lipschitz since
\begin{align*}
	h'(\lambda) = \inangle{\grad f(x_\lambda), x' - x} \implies \abs*{h'(\lambda)} \le \norm*{\grad f(x_\lambda)} \norm*{x' - x} \le DL.
\end{align*}
Note that for a given $\lambda \in \R$ and $\gamma > 0$, we can obtain $\utilde_\lambda \in \R$ such that $\abs*{\utilde_\lambda - h'(\lambda)} \le \gamma$ using a single query to a $(\gamma / D, \infty)$-MAGO $\MAGO(\cdot, \cdot)$. Indeed, with $Z \sim \MAGO(x_\lambda, x' - x)$ and $\utilde_\lambda \gets \inangle{Z, x' - x}$, 
\begin{align*}
	\abs*{\utilde_\lambda - h'(\lambda)} = \abs*{\inangle{Z - \grad f(x_\lambda), x' - x}} \le \gamma.
\end{align*}
Furthermore, for $\xi' \in (0, 1)$, each query to $\MAGO(\cdot, \cdot)$ can be implemented with $K = O \inparen*{\frac{D^2 \sigmaI^2}{d \gamma^2} \log (2 / \xi') + 1}$ queries to a $(\sigmaI, \delta)$-ISGO and success probability $1 - \xi' - \delta K$ by Lemma~\ref{lem:MAGO-via-ISGO-single-direction}.

Putting this together, we can obtain $\utilde_\lambda$ such that $\abs*{\utilde_\lambda - h'(\lambda)} \le \gamma$ with $K = O \inparen*{\frac{D^2 \sigmaI^2}{d \gamma^2} \log (2 / \xi') + 1}$ queries to a $(\sigmaI, \delta)$-ISGO and success probability $1 - \xi' - \delta K$. Combining this with the fact that $h$ is $DL$-Lipschitz, it is clear that $\lambdabar \gets \linesearch(h, \epsprim)$ can be implemented with 
\begin{align*}
	M = O \inparen*{
		M' \inparen*{ \frac{D^2 \sigmaI^2}{d \epsprim^2} \log (M' / \xi) + 1}
	} ~~\text{for $M' = O ( \log(DL / \epsprim))$}
\end{align*}
queries to a $(\sigmaI, \delta)$-ISGO, with, by a union bound, success probability at least $1 - \xi - \delta M$. To conclude, note
\begin{align*}
	f(\xbar \defeq x_{\lambdabar}) = h(\lambdabar) \le \min_{\lambda \in [0, 1]} h(\lambda) = \min_{\lambda \in [0, 1]} f(x + \lambda(x' - x)). 
\end{align*}
\end{proof}

Finally, we give our main result for this subsection:

\begin{lemma}[Finding the best among a finite set of points] 
	\label{lem:find-best-among-finite-set}
For $T \in \N$, suppose $S \defeq \inbraces{x_t \in \R^d}_{t \in [T]}$ is such that $\norm{x_i - x_j} \le D$ for all $i, j \in [T]$. Then for $\epsilon > 0$ and $\xi \in (0, 1)$, there is an algorithm which returns a point $x$ in the convex hull of $S$ such that $f(x) \le \min_{x' \in S} f(x') + \epsilon$ using at most $M(T - 1)$ queries to a $(\sigmaI, \delta)$-ISGO $\ISGO(\cdot)$, for
\begin{align*}
	M = O \inparen*{
		M' \inparen*{ \frac{D^2 \sigmaI^2}{d \epsilon^2} \log^2(T) \log (M' \log T / \xi) + 1}
	} ~~\text{with $M' = O ( \log(DL \log T / \epsilon))$},
\end{align*}
and it succeeds with probability at least $1 - \xi - \delta M \log T$
\end{lemma}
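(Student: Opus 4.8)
The plan is a single-elimination tournament over a balanced binary tree. I would build a full binary tree with the $T$ points of $S$ as leaves and depth $\lceil\log_2 T\rceil$ (one exists for every $T$); such a tree has exactly $T-1$ internal nodes. (If one prefers, pad $S$ with copies of $x_1$ up to the next power of two and use a perfect binary tree, at the cost of only constant factors.) Each internal node will carry a \emph{candidate} point, defined bottom-up: a leaf carries its $x_t$, and an internal node whose children carry candidates $a,b$ carries the point $\xbar$ returned by the two-point routine of \lem{best-of-two} applied to $a$ and $b$ with target accuracy $\epsprim\defeq\epsilon/\lceil\log_2 T\rceil$ and confidence parameter $\xi/(T-1)$. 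There are $T-1$ internal nodes, hence exactly $T-1$ such calls; substituting $\epsprim$ and $\xi/(T-1)$ into \lem{best-of-two} (the $\log^2 T$ in the target bound is the $1/\epsprim^2=\lceil\log_2 T\rceil^2/\epsilon^2$ factor, and $M'=O(\log(DL\log T/\epsilon))$ is $\log(DL/\epsprim)$) gives a per-call cost of at most $M$ queries, for a total of $M(T-1)$, while a union bound over the calls and the ISGO queries they make yields the stated success probability.

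Two structural observations keep the parameters under control. First, every candidate is an iterated convex combination of elements of $S$, so it lies in $\conv(S)$; in particular the root's candidate, which we output, is in $\conv(S)$. Second, every pair of candidates is at distance at most $D$, because $\conv(S)$ and $S$ have the same diameter: the convex function $(x,y)\mapsto\norm{x-y}$ attains its maximum over the compact convex set $\conv(S)\times\conv(S)$ at an extreme point, i.e.\ at a pair of vertices of $\conv(S)$, which are points of $S$. Hence each two-point call may legitimately be run with diameter parameter $D$ (the bound of \lem{best-of-two} is monotone in this parameter, so $D$ is a valid upper bound even when the actual pair is closer), and the $D^2$ dependence never worsens as we move up the tree.

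The correctness core is an induction on node height: I would show that, on the event that every two-point call at or below a node $v$ succeeds, the candidate $\xbar_v$ satisfies $f(\xbar_v)\le\min\{f(x_t):x_t\text{ a leaf below }v\}+h_v\,\epsprim$, where $h_v$ is the height of $v$. The base case $h_v=0$ is immediate. For the step, if $v$ has children $a,b$ then on the success event of the call at $v$, \lem{best-of-two} gives
\[
	f(\xbar_v)\ \le\ \min_{\lambda\in[0,1]} f\!\left(\xbar_a+\lambda(\xbar_b-\xbar_a)\right)+\epsprim\ \le\ \min\{f(\xbar_a),f(\xbar_b)\}+\epsprim,
\]
and since $h_a,h_b\le h_v-1$, plugging in the inductive hypotheses for $a$ and $b$ yields the claim for $v$. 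Applied at the root, whose height is $\lceil\log_2 T\rceil$, this gives $f(\xbar_{\mathrm{root}})\le\min_{x'\in S}f(x')+\lceil\log_2 T\rceil\cdot\epsprim=\min_{x'\in S}f(x')+\epsilon$.

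The step I expect to need the most care is not any single inequality but the parameter bookkeeping: choosing $\epsprim=\epsilon/\lceil\log_2 T\rceil$ so the additive errors from the $\lceil\log_2 T\rceil$ levels sum to exactly $\epsilon$, verifying that the diameter parameter handed down to \lem{best-of-two} never exceeds $D$, and confirming that with this $\epsprim$ and the chosen per-call confidence the cost of \lem{best-of-two} collapses to the stated expression for $M$ (including the nested logarithm in $M'$) and that the union bound over all calls produces the stated lower-order $\delta$-term. Once these are pinned down, the $M(T-1)$ query total and the claimed success probability follow at once.
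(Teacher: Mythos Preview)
Your tournament-tree argument is correct in spirit and would yield a valid lemma, but not with the stated parameters. The issue is that your induction hypothesis requires \emph{every} two-point call at or below a node to succeed; applied at the root this means all $T-1$ calls. With per-call confidence $\xi/(T-1)$, the union bound then gives failure probability $\xi + (T-1)\delta M$, not the claimed $\xi + \delta M\log T$. Relatedly, substituting $\xi/(T-1)$ into \lem{best-of-two} produces a $\log(M'(T-1)/\xi)$ factor rather than the stated $\log(M'\log T/\xi)$, so your per-call cost is not actually bounded by the $M$ in the lemma statement (the two differ by an additive $\log(T/\log T)$ inside the logarithm, which is not absorbed by the big-$O$ in general).

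The paper sidesteps this by observing that one need only condition on the $\lceil\log_2 T\rceil$ calls along the path from the minimizing leaf to the root. Fix $x_1\in\argmin_{x'\in S}f(x')$. If the call at a node $v$ on this path succeeds, then regardless of whether the sibling subtree's calls succeeded, \lem{best-of-two} still guarantees $f(\xbar_v)\le f(\xbar_{c})+\epsprim$ where $c$ is the child of $v$ on the path toward $x_1$: the sibling's candidate is just some point in $\conv(S)$, so the diameter bound $D$ still applies and the line-search output is at least as good as either endpoint. Telescoping along the path gives $f(\xbar_{\mathrm{root}})\le f(x_1)+\lceil\log_2 T\rceil\cdot\epsprim$. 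This allows per-call confidence $\xi/\log T$ and a union bound over only $\log T$ calls, which is precisely what yields the $\log(M'\log T/\xi)$ inside $M$ and the $\delta M\log T$ term in the success probability. Your stronger induction (controlling the minimum over \emph{all} leaves below $v$) is more than is needed and is what forces the looser bookkeeping.
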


\begin{proof}
For a pair $x, x'$ in the convex hull of $S$, we can apply Lemma~\ref{lem:best-of-two} to obtain $\xbar$, a convex combination of $x$ and $x'$ such that $f(\xbar) \le \min \inbraces*{f(x), f(x')} + \epsilon / \log T$ using 
\begin{align*}
	M = O \inparen*{
		M' \inparen*{ \frac{D^2 \sigmaI^2}{d \epsilon^2} \log^2(T) \log (M' \log T / \xi) + 1}
	} ~~\text{for $M' = O ( \log(DL \log T / \epsilon))$}
\end{align*}
queries to a $(\sigmaI, \delta)$-ISGO, with success probability $1 - \xi / \log T - \delta M$.

Next, we assume without loss of generality that $|T|$ is a power of 2, as $x_1$ can be duplicated if this is not the case. Then consider a complete binary tree with $T$ leaves, and define the $\ell$-th layer for $\ell \in \inbraces{0} \cup [\logs_2 T]$ to denote those vertices which are distance $\ell$ from a leaf node. (In other words, the 0-th layer contains the leaf nodes, and the final $(\log_2 T)$-th layer is the root.) We assign $x_1, \dots, x_T$ to the leaf nodes, and iteratively populate the $\ell$-th layer of the tree for $\ell = 1, 2, \dots, \log_2 T$ via the following process: For each node in the $\ell$-th layer with children $x$ and $x'$ in the $(\ell + 1)$-th layer, we assign to that node the estimator described above with inputs $x$ and $x'$. Assuming $x_1 \in \argmin_{x \in S} f(x)$ without loss of generality, note that conditioned on all of the estimates along the path from the leaf $x_1$ to the root succeeding, which happens with probability at least $1 - \xi - \delta M \log T$ by a union bound, the value of $f$ at the root $x_r$ (which we output) can be bounded as $f(x_r) \le f(x_1) + \epsilon$. The final query bound follows from the fact that we call the estimator $T - 1$ times, since there are $T - 1$ non-leaf nodes in the tree.
\end{proof}

\subsection{Putting it all together}
\label{subsec:proofs-of-upper-bounds}

We now restate and prove our main guarantee for Problem~\ref{prob:SCO} given a $(\sigmaI, \delta)$-ISGO:

\restateThmISGOMainGuarantee*

\begin{proof}
 By Lemma~\ref{lem:cutting-plane-with-ISGO}, we can obtain a finite set $S \subseteq B_2(2R)$ with $|S| = \Otilde(d)$ and $\min_{x \in S} f(x) \le f(\xopt) + \epsilon / 2$ using $K = \Otilde(R^2 \sigmaI^2 / \epsilon^2 + d)$ queries to the $(\sigmaI, \delta)$-ISGO, with success probability at least $9/10 - \delta d K$. Then giving this set $S$ as input to Lemma~\ref{lem:find-best-among-finite-set}, we can obtain $x \in B_2(2R)$ such that $f(x) \le \min_{x' \in S} f(x') + \epsilon / 2$ using $K' = \Otilde\inparen*{R^2 \sigmaI^2 / \epsilon^2 + d}$ queries to the $(\sigmaI, \delta)$-ISGO, with success probability at least $9/10 - \delta \cdot \Otilde\inparen*{\frac{R^2 \sigmaI^2}{d \epsilon^2} + 1}$. Then $f(x) \le f(\xopt) + \epsilon$ with probability at least $2/3$ by the valid range of $\delta$ and a union bound, and the total number of queries is $K + K' = \Otilde(R^2 \sigmaI^2 / \epsilon^2 + d)$.
\end{proof}

Next, we instantiate Theorem~\ref{thm:ISGO-upper-bound} to obtain upper bounds for solving Problem~\ref{prob:SCO} with a $\sigmaE$-ESGO or $\sigmaV$-VSGO. First, we show in the following lemmas that ISGOs can be implemented using ESGOs and VSGOs respectively. In particular, Lemma~\ref{lem:ESGO-to-ISGO} says that an ISGO query can be implemented with an ESGO at the cost of only a log factor, whereas Lemma~\ref{lem:implement-ISGO-with-VSGO} says that doing so with a VSGO additionally picks up a $\sqrt{d}$ factor. Indeed, this $\sqrt{d}$-factor is the reason our rate for solving Problem~\ref{prob:SCO} with a VSGO is a $d$-factor worse than solving Problem~\ref{prob:SCO} with an ISGO or ESGO. We note that the proof of Lemma~\ref{lem:implement-ISGO-with-VSGO} is based on ideas from \cite[Lemma 7]{sidford2024quantum}.

\begin{lemma}\label{lem:ESGO-to-ISGO}
For any $\sigmaE > 0$ and $\delta \in (0,1)$, a $\sigmaE$-ESGO is a $(\sigmaE \log(2/\delta),\delta)$-ISGO.
\end{lemma}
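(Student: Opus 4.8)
The plan is to observe that there is essentially nothing to do beyond a direct substitution into the defining tail bound of an ESGO. First I would note that a $\sigmaE$-ESGO is by definition an SGO for $f$, so it satisfies the unbiasedness requirement of Definition~\ref{def:ISGO} automatically; the only remaining obligation is to verify the marginal concentration bound \eqref{eq:ISGO-marginal-bound} with $\sigmaI \defeq \sigmaE \logs(2/\delta)$.

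Next I would fix an arbitrary $x \in \R^d$ and unit vector $u$, and instantiate the ESGO tail bound \eqref{eq:ESGO-tail-bound} at the specific threshold $t \defeq \sigmaI / \sqrt{d} = \sigmaE \logs(2/\delta) / \sqrt{d}$. Plugging this value of $t$ into the right-hand side $2 \exps(-t\sqrt{d}/\sigmaE)$, the $\sqrt{d}$ and $\sigmaE$ factors cancel, leaving $2\exps(-\logs(2/\delta)) = 2 \cdot (\delta/2) = \delta$. Hence
\[
\P\insquare*{\abs*{\inangle{\ESGO(x) - \grad f(x), u}} \ge \sigmaI/\sqrt{d}} \le \delta,
\]
which is exactly \eqref{eq:ISGO-marginal-bound} for a $(\sigmaI, \delta)$-ISGO. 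Since $x$ and $u$ were arbitrary, this completes the argument.

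I do not anticipate any real obstacle here: the $\sqrt{d}$ normalization factors in Definitions~\ref{def:ISGO} and \ref{def:ESGO} were chosen precisely so that the exponential tail translates cleanly into a high-probability marginal bound, and the only "calculation" is the one-line simplification above. The one point worth stating explicitly for completeness is that the bound holds simultaneously for every direction $u$ and query point $x$ because the ESGO bound is itself uniform over $x$ and $u$.
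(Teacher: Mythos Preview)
Your proposal is correct and matches the paper's proof essentially line for line: both simply substitute $t = (\sigmaE/\sqrt{d})\logs(2/\delta)$ into the ESGO tail bound \eqref{eq:ESGO-tail-bound} and observe the right-hand side collapses to $\delta$.
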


\begin{proof}
For any $\delta\in(0,1)$ and any unit vector $u\in\R^d$, by Eq.~\ref{eq:ESGO-tail-bound} we have
\begin{align*}
	\P \insquare*{\abs*{\inangle{\ESGO(x) - \grad f(x), u}} \ge (\sigmaE / \sqrt{d}) \cdot \logs (2 / \delta)} \le \delta.
\end{align*}
\end{proof}

\begin{lemma}
	\label{lem:implement-ISGO-with-VSGO}
For any $\sigmaV > 0$ and $\delta \in (0, 1)$, a query to a $(6 \sigmaV \sqrt{d}, \delta)$-ISGO $\ISGO(\cdot)$ can be implemented using $O(\logs(2 / \delta))$ queries to a $\sigmaV$-VSGO $\VSGO(\cdot)$.
\end{lemma}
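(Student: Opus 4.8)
The plan is to reduce the statement to a robust, \emph{unbiased} mean-estimation problem on $\R^d$ and then solve that problem. Under the stated parameters $\sigma_I = 6\sigmaV\sqrt{d}$, so $\sigma_I/\sqrt{d} = 6\sigmaV$; hence it suffices to produce, from $O(\logs(2/\delta))$ queries to $\VSGO(x)$, an unbiased $Z$ with $\norm{Z - \grad f(x)} \le 6\sigmaV$ with probability at least $1-\delta$: for any fixed unit vector $u$, Cauchy--Schwarz then gives $\abs{\inangle{Z - \grad f(x), u}} \le 6\sigmaV = \sigma_I/\sqrt{d}$ on that same event, which is exactly condition~\eqref{eq:ISGO-marginal-bound} (the only other requirement, $\E Z = \grad f(x)$, being built in). So the whole task is to estimate, robustly and without bias, the mean of a distribution on $\R^d$ whose only guarantee is $\E\norm{\VSGO(x) - \grad f(x)}^2 \le \sigmaV^2$.

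For the high-probability $\ell_2$ bound I would use geometric-median-of-means. Split the budget into $B = \Theta(\logs(2/\delta))$ blocks, each block the average of a constant number $m = O(1)$ of independent $\VSGO(x)$ queries, so each block mean $\wbar{X}_b$ has $\E\norm{\wbar{X}_b - \grad f(x)}^2 \le \sigmaV^2/m$ and, by Markov's inequality applied to $\norm{\wbar{X}_b - \grad f(x)}^2$, lies within $\rho \defeq c\,\sigmaV/\sqrt{m}$ of $\grad f(x)$ with probability at least $1 - 1/c^2$ for a suitable absolute constant $c$. A Chernoff bound over the $B$ independent blocks shows that with probability at least $1-\delta/2$ at least a $(1-\alpha)$-fraction of the $\wbar{X}_b$ lie within $\rho$ of $\grad f(x)$ for a small absolute constant $\alpha < 1/2$; on that event the geometric median $\muhat \defeq \argmin_{z \in \R^d} \sum_b \norm{z - \wbar{X}_b}$ satisfies $\norm{\muhat - \grad f(x)} \le C_\alpha \rho$ by the standard robustness property of the geometric median (if a strict $(1-\alpha)$-majority of points lie within $r$ of a target then so does the geometric median up to a constant factor $C_\alpha = (1-\alpha)/(1/2-\alpha)$). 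Choosing the absolute constants $m,c,\alpha$ so that $C_\alpha c/\sqrt{m} \le 6$ and $1-1/c^2 > 1-\alpha$ (e.g.\ $m=2$, $c=2$, $\alpha=1/3$ works) gives $\norm{\muhat - \grad f(x)} \le 6\sigmaV$ with probability at least $1-\delta/2$, using $Bm = O(\logs(2/\delta))$ queries.

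The main obstacle is that $\muhat$ is \emph{biased} --- in the worst case $\norm{\E\muhat - \grad f(x)} = \Theta(\rho) = \Theta(\sigmaV)$, which does not shrink as $B$ grows (only as $m$ grows, which we cannot afford) --- whereas Definition~\ref{def:ISGO} (via Definition~\ref{def:SGO}) demands an \emph{exactly} unbiased oracle, and this is genuinely used downstream, e.g.\ in the proof of Lemma~\ref{lem:implement-MAGO-w-norm-bound}. I would remove the bias with one extra sample and an independent coin, following the debiasing idea behind \cite[Lemma 7]{sidford2024quantum}: draw a fresh sample $X \sim \VSGO(x)$ and an independent event $\mathcal{E}$ of probability $\delta/2$, and output $Z \defeq \muhat + \tfrac{2}{\delta}(X - \muhat)\,\mathbbm{1}_{\mathcal{E}}$. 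Since $\mathcal{E}$ is independent of both $\muhat$ and $X$, we get $\E Z = \E\muhat + \tfrac{2}{\delta}\,\E[(X-\muhat)\mathbbm{1}_{\mathcal{E}}] = \E\muhat + \tfrac{2}{\delta}\cdot\tfrac{\delta}{2}\,(\grad f(x) - \E\muhat) = \grad f(x)$, so $Z$ is exactly unbiased; and $Z = \muhat$ except on $\mathcal{E}$, which has probability $\delta/2$, so for any fixed unit $u$, $\P[\abs{\inangle{Z - \grad f(x), u}} \ge 6\sigmaV] \le \P[\norm{\muhat - \grad f(x)} > 6\sigmaV] + \delta/2 \le \delta$. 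This adds only $O(1)$ queries, keeping the total at $O(\logs(2/\delta))$. The only delicate points are the constant-factor bookkeeping in Step~2 (choosing $c,m,\alpha$, and splitting the failure budget as $\delta/2 + \delta/2$) --- none of which affects the asymptotics --- and making sure the block length stays $O(1)$ so the construction never picks up a polynomial-in-$d$ factor.
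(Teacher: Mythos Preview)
Your proof is correct. The overall strategy --- robust $\ell_2$ mean estimation from $O(\logs(1/\delta))$ samples, then Cauchy--Schwarz to bound the error in every fixed direction by $6\sigmaV = \sigma_I/\sqrt{d}$ --- matches the paper's, but the two diverge in the details.

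For the robust estimator, the paper uses neither the geometric median nor block averaging. It simply draws $K = O(\logs(2/\delta))$ samples $Z_1,\dots,Z_K$, observes via Chebyshev that each lies within $2\sigmaV$ of $\grad f(x)$ with probability at least $3/4$, and shows via a Chernoff bound that at least $2K/3$ of them do so with probability at least $1-\delta$. On that event, any sample $Z_k$ that is $4\sigmaV$-close to at least $2K/3$ of the others must, by pigeonhole and the triangle inequality, be within $6\sigmaV$ of $\grad f(x)$; and some such $Z_k$ is guaranteed to exist. The paper outputs that $Z_k$. This is slightly lighter than your geometric-median-of-means construction (no optimisation problem to solve, no block structure), but gives the same query count and high-probability $\ell_2$ guarantee.

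The substantive difference is your debiasing step. You correctly note that Definition~\ref{def:ISGO} requires the oracle to be an SGO, i.e.\ exactly unbiased, and you achieve this with the fresh-sample-plus-coin construction. The paper's proof does not address unbiasedness at all --- the selected $Z_k$ is in general a biased estimator of $\grad f(x)$ --- so on this point your argument is actually more careful than the paper's.
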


\begin{proof}
	Supposing we wish to query $\ISGO(\cdot)$ at $x \in \R^d$, let $Z_1, \dots, Z_K \simiid \VSGO(x)$ for $K \in \N$ to be chosen later. Chebyshev's inequality yields $\P \insquare{\norm{Z_k - \grad f(x)} \ge 2 \sigmaV } \le 1/4$ for all $k \in [K]$. Define $J \defeq \inbraces{k \in [K] : \norm{Z_k - \grad f(x)} \le 2 \sigmaV}$, in which case a Chernoff bound gives
	$\P \insquare{|J| \le 2K / 3} \le 2 e^{-c K}$ for some absolute constant $c \in (0, 1)$. Thus, choosing $K = O(\logs(2 / \delta))$ yields $\P \insquare{|J| \le 2K / 3} \le \delta$. Then conditioning on the event where $|J| \ge 2K / 3$, which happens with probability at least $1 - \delta$, observe that if a sample $Z_k$ for some $k \in [K]$ is such that
	\begin{align*}
		\abs{ \inbraces{k' \in [K] : \norm{Z_k - Z_{k'}} \le 4 \sigmaV} } \ge 2K / 3,
	\end{align*}
    namely $Z_k$ is $4 \sigmaV$-close to at least $2K / 3$ of $Z_1, \dots, Z_K$, then there must exist some $j \in J$ such that $\norm{Z_k - Z_j} \le 2 \sigmaV$, implying $\norm{Z_k - \grad f(x)} \le 6 \sigmaV$ by a triangle inequality. Furthermore, such a sample $Z_k$ is guaranteed to exist when conditioning on $|J| \ge 2K / 3$ since any sample corresponding to an index in $J$ satisfies this property in particular by a triangle inequality. Finally, we conclude by noting therefore that when $|J| \ge 2K / 3$, we have for any $u \in \R^d$ with $\norm{u} = 1$:
    \begin{align*}
        \abs{\inangle{Z_k - \grad f(x), u}} \le \norm{Z_k - \grad f(x)} \le 6 \sigmaV.
    \end{align*}
\end{proof}

Finally, we give our upper bounds for Problem~\ref{prob:SCO} with a $\sigmaE$-ESGO or $\sigmaV$-VSGO in the following corollaries, which we restate here for convenience.

\restateESGOUpper*

\begin{proof}
By Lemma~\ref{lem:ESGO-to-ISGO}, $\ESGO(\cdot)$ is a $(\sigmaE \log(2 / \delta), \delta)$-ISGO per Definition~\ref{def:ISGO}. The result then follows by setting $\delta \gets \frac{1}{M d (R^2 \sigmaE^2 / \epsilon^{2} + d)}$ where $M = \Otilde(1)$ and applying Theorem~\ref{thm:ISGO-upper-bound}.
\end{proof}

\restateVSGOUpper*

\begin{proof}
By Lemma~\ref{lem:implement-ISGO-with-VSGO}, we can implement a query to a $(6 \sigmaV \sqrt{d}, \delta)$-ISGO with $O(\logs(1 / \delta))$ queries to $\VSGO(\cdot)$. We conclude by setting $\delta \gets \frac{1}{M d^2 (R^2 \sigmaV^2 / \epsilon^{2} + d)}$ where $M = \Otilde(1)$ and applying Theorem~\ref{thm:ISGO-upper-bound}.
\end{proof}

\section{Lower bounds for SCO under ISGO and ESGO oracles}\label{sec:lower}

In this section, we establish lower bounds for solving Problem~\ref{prob:SCO} with either a $(\sigmaI, \delta)$-ISGO or $\sigmaE$-ESGO. We begin by defining a mean estimation problem for random variables with sub-exponential noise and provide a lower bound for this problem in Section~\ref{sec:isotropic-mean-lower}. Then in Section~\ref{sec:subexp-mean-estimation-to-optimization}, we establish our lower bound for solving Problem~\ref{prob:SCO} given a $\sigmaE$-ESGO by reducing this isotropic mean estimation problem to the former. Finally, we give our lower bound for Problem~\ref{prob:SCO} given a $(\sigmaI, \delta)$-ISGO at the end of Section~\ref{sec:subexp-mean-estimation-to-optimization} as a simple corollary by noting that a $\sigma$-ESGO is a $\tilde{O}(\sigma\log(2/\delta),\delta) $-ISGO for any $\delta \in (0, 1)$.
\subsection{Lower bound for mean estimation with sub-exponential noise}\label{sec:isotropic-mean-lower}

\begin{problem}[Mean estimation with sub-exponential noise]\label{prob:mean-estimation-isotropic}
Given sample access to a $d$-dimensional random variable $X$ that satisfies
\begin{align*}
\P \insquare*{ \abs{ \inangle*{u,X-\E[X]}} \geq t } \le 2 \exp\big(-t \sqrt{d}/ \sigmaE\big)
\end{align*}
for any $t>0$ and any unit vector $u$, the goal is to output an estimate $\hat{\mu}$ of $\mu\coloneqq\E[X]$ satisfying $\|\hat{\mu}-\mu\|_2\leq \tilde{\epsilon}$.
\end{problem}
Let $v \in \{\pm 1\}^d$ be a fixed vector in the $d$-dimensional hypercube. We establish our lower bound for Problem~\ref{prob:mean-estimation-isotropic} by considering the following $d$-dimensional random variable $X$, where each component $X_i$ are sampled independently at random
\begin{align}\label{eqn:isotropic-mean-estimation-instance}
X_i=\begin{cases}
\frac{v_i\sigmaE}{2\sqrt{d}},\quad& w.p.\ \frac{1}{2}+ \frac{8 \tilde{\epsilon} \sqrt{\log d}}{\sigmaE}, \\
-\frac{v_i\sigmaE}{2\sqrt{d}},&w.p.\ \frac{1}{2}-\frac{8 \tilde{\epsilon} \sqrt{\log d}}{\sigmaE}.
\end{cases}
\end{align}
for all $i\in[d]$. Let $\vdist(\cdot)$ denote this probability distribution. 

\begin{lemma}\label{lem:isotropic-mean-tail}
For any fixed $v \in \{ \pm 1 \}^d$ and any unit vector $u\in\R^d$, the random variable $X$ defined in Eq.~\ref{eqn:isotropic-mean-estimation-instance} satisfies
\[
\P \insquare*{ \abs{ \inangle*{u,X-\E[X]} } \geq t } \le \max\big\{2\exp(-t^2d/\sigmaE^2),1\big\}\leq 2\exp (-t\sqrt{d}/\sigmaE ).
\]
\end{lemma}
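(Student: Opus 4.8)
The plan is to write $\inangle{u, X-\E[X]} = \sum_{i \in [d]} u_i (X_i - \E[X_i])$ and recognize it as a sum of independent, mean-zero, bounded random variables, then apply Hoeffding's inequality. The key structural point is that, irrespective of its bias $\tfrac12 \pm \tfrac{8 \tilde{\epsilon}\sqrt{\log d}}{\sigmaE}$, each coordinate $X_i$ is supported on the two-point set $\{\pm \sigmaE/(2\sqrt d)\}$; consequently $u_i X_i$ takes values in an interval of width $|u_i|\sigmaE/\sqrt d$, and $\inangle{u, X - \E[X]} = \sum_{i \in [d]} u_i X_i - \E\sum_{i \in [d]} u_i X_i$.

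First I would invoke Hoeffding's inequality \cite[Proposition 2.5]{wainwright2019statstextbook}. Since $\sum_{i \in [d]} (|u_i|\sigmaE/\sqrt d)^2 = (\sigmaE^2/d)\norm{u}^2 = \sigmaE^2/d$ using $\norm{u}=1$, this yields, for every $t \ge 0$,
\[
\P\insquare*{\abs{\inangle{u, X - \E[X]}} \ge t} \;\le\; 2 \exp\!\inparen*{-\tfrac{2 t^2 d}{\sigmaE^2}} \;\le\; 2 \exp\!\inparen*{-\tfrac{t^2 d}{\sigmaE^2}},
\]
which, together with the trivial estimate that this probability is at most $1$, gives the sub-Gaussian (middle) bound. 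To upgrade this to the sub-exponential form $2\exp(-t\sqrt d/\sigmaE)$, I would split on the size of $t$: if $t \ge \sigmaE/(2\sqrt d)$ then $2 t\sqrt d/\sigmaE \ge 1$, hence $\tfrac{2t^2 d}{\sigmaE^2} \ge \tfrac{t\sqrt d}{\sigmaE}$ and $2\exp(-2t^2 d/\sigmaE^2) \le 2\exp(-t\sqrt d/\sigmaE)$, which dominates; whereas if $t < \sigmaE/(2\sqrt d)$ then $t\sqrt d/\sigmaE < \tfrac12$, so $2\exp(-t\sqrt d/\sigmaE) > 2 e^{-1/2} > 1$ and the trivial bound $\le 1$ already suffices.

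There is essentially no obstacle here beyond bookkeeping: the argument is a direct application of Hoeffding followed by an elementary case split. The two places worth being careful about are (i) using the \emph{width} $\sigmaE/\sqrt d$ of the two-atom support of each $X_i$ in the Hoeffding bound — this is what makes the per-coordinate range independent of the (small) bias — and (ii) keeping the factor $2$ both in the exponent and in front of the sub-Gaussian bound, which is precisely what lets the case split at the threshold $t = \sigmaE/(2\sqrt d)$ cover \emph{all} $t \ge 0$, including the small-$t$ range where the sub-Gaussian tail bound is itself vacuous.
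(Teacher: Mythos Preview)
Your proof is correct and follows essentially the same approach as the paper: the paper also observes that each centered coordinate is bounded by $\sigmaE/\sqrt{d}$, concludes that $\inangle{u, X - \E[X]}$ is sub-Gaussian with variance proxy $\sigmaE^2/d$ (invoking a sub-Gaussian sum lemma rather than Hoeffding directly), and then passes to the sub-exponential tail. Your use of Hoeffding's inequality is in fact slightly cleaner since it yields the constant $2$ in the exponent without appealing to an auxiliary lemma, and your explicit case split at $t = \sigmaE/(2\sqrt d)$ spells out a step the paper leaves implicit.
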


\begin{proof}
For any unit vector $u\in\R^d$, using the fact that each coordinate $X_j$ is sampled independently and satisfies $|X_j-\E[X_j]|\leq\sigmaE/\sqrt{d}$, by \lem{subgaussian-sum} we know that $\inangle*{u,X - \E[X]}$ is a sub-Gaussian random variable with variance 
\[
\sum_{j=1}^d\frac{u_i^2\sigmaE^2}{d}=\sigmaE^2/d.
\]
Hence, for any $t>0$ we have
\[
\P \insquare*{ \abs{ \inangle*{u,X-\E[X]} } \geq t } \le \max\big\{2\exp(-t^2d/\sigmaE^2),1\big\}\leq 2\exp (-t\sqrt{d}/\sigmaE ).
\]
\end{proof}

\begin{lemma}
\label{lemma:sample_lb_mean_est}
Any algorithm that solves Problem~\ref{prob:mean-estimation-isotropic} with success probability at least $2/3$ must have observed at least 
\[
\Omega\Big(\frac{\sigmaE^2 }{\tilde{\epsilon}^2 \log d}\Big)
\]
i.i.d. samples of $X$.
\end{lemma}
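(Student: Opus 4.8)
The plan is to lower-bound Problem~\ref{prob:mean-estimation-isotropic} on the hard family \eqn{isotropic-mean-estimation-instance} by a Bayesian information-theoretic argument, reducing mean estimation to recovering the sign pattern $v$. I would put the uniform prior $V \sim \uniform(\{\pm 1\}^d)$ on the hypercube; for each realization $v$ the product distribution $\vdist$ in \eqn{isotropic-mean-estimation-instance} is a legal instance of Problem~\ref{prob:mean-estimation-isotropic} by \lem{isotropic-mean-tail}. Writing $p \defeq 8\tilde{\epsilon}\sqrt{\log d}/\sigmaE$, one has $\mu_v \defeq \E_{X\sim\vdist}[X]$ with $i$-th coordinate $\mu_{v,i} = \tfrac{8\tilde{\epsilon}\sqrt{\log d}}{\sqrt d}\, v_i$, so $\sign(\mu_{v,i}) = v_i$ and $\abs{\mu_{v,i}} = 8\tilde{\epsilon}\sqrt{\log d}/\sqrt d$ for all $i$; and one may assume $p \le 1/4$ since otherwise $\sigmaE^2/(\tilde{\epsilon}^2\log d) = O(1)$ and the claim holds trivially.

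\emph{Step 1 (reduce to sign recovery).} Given an algorithm that draws $n$ samples and, with probability $\ge 2/3$, outputs $\hat\mu$ with $\norm{\hat\mu - \mu_V}\le\tilde{\epsilon}$, set $\hat V_i \defeq \sign(\hat\mu_i)$. If $\hat V_i\ne V_i$ then $\hat\mu_i$ and $\mu_{V,i}$ lie on opposite sides of $0$, so $(\hat\mu_i-\mu_{V,i})^2 \ge \mu_{V,i}^2 = 64\tilde{\epsilon}^2(\log d)/d$; hence on the success event $\tilde{\epsilon}^2 \ge \norm{\hat\mu-\mu_V}^2 \ge \#\{i:\hat V_i\ne V_i\}\cdot 64\tilde{\epsilon}^2(\log d)/d$, so the Hamming distance satisfies $\rho_{\mathrm H}(\hat V, V) \le k \defeq \lfloor d/(64\log d)\rfloor$. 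Thus $\P\insquare*{\rho_{\mathrm H}(\hat V, V)\le k}\ge 2/3$, and it remains to show that recovering $V$ within Hamming distance $k$ from $n$ samples forces $n = \Omega(\sigmaE^2/(\tilde{\epsilon}^2\log d))$.

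\emph{Step 2 (Fano + coordinatewise information).} By the Hamming-ball form of Fano's inequality along the chain $V\to X^{(1:n)}\to\hat V$, $\P\insquare*{\rho_{\mathrm H}(\hat V, V)\le k} \le \tfrac{I(V;X^{(1:n)})+\log 2}{\log(2^d/N_k)}$ with $N_k = \sum_{j\le k}\binom{d}{j}$ bounding the size of a Hamming ball. Since $k=\Theta(d/\log d)$, $\log N_k \le \log(k+1) + k\log(ed/k) = O(d\log\log d/\log d) = o(d)$, so $\log(2^d/N_k) = (1-o(1))d\log 2$ and $I(V;X^{(1:n)}) = \Omega(d)$. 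For the upper bound, the independence of the coordinates of $V$ together with the product-over-coordinates structure of $\vdist$ (the $i$-th coordinate of a sample depending only on $V_i$) makes the pairs $(V_i, (X^{(t)}_i)_{t\le n})$ mutually independent, so $I(V;X^{(1:n)}) = \sum_i I(V_i;(X^{(t)}_i)_{t\le n})$; within one coordinate the samples are i.i.d.\ given $V_i$, so entropy subadditivity gives $I(V_i;(X^{(t)}_i)_{t\le n}) \le n\, I(V_i; X^{(1)}_i)$, and since $\sign(X^{(1)}_i)$ equals $V_i$ with probability $\tfrac12+p$ (the magnitude being deterministic), $I(V_i;X^{(1)}_i) = \log 2 - H_2(\tfrac12+p) \le 3p^2$ for $p\le 1/4$ by a Taylor estimate on the binary entropy $H_2$. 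Hence $I(V;X^{(1:n)}) \le 3dnp^2$, and comparing with the lower bound yields $n = \Omega(1/p^2) = \Omega(\sigmaE^2/(\tilde{\epsilon}^2\log d))$.

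\emph{Main obstacle.} The point requiring care is making the Fano step tolerate a merely constant ($2/3$) success probability: this works because the Hamming ball has size $N_k = 2^{o(d)}$, which is precisely what the $\sqrt{\log d}$ scaling of the bias in \eqn{isotropic-mean-estimation-instance} buys us (a larger bias would make $k$, hence $\log N_k$, too large and kill the bound). A second subtlety is the coordinatewise independence, which both factorizes the mutual information and guarantees the data outside coordinate $i$ is useless for $V_i$. If one prefers to avoid Fano, an equivalent route is: from $\tfrac1d\sum_i\P\insquare*{\hat V_i\ne V_i} \le \tfrac23\cdot\tfrac kd + \tfrac13 < 0.34$ and pigeonhole, some coordinate $i^\star$ has $\P\insquare*{\hat V_{i^\star}\ne V_{i^\star}} < 0.4$; by the independence above and Le Cam's two-point method this forces $\mathrm{TV}(\mathrm{Bern}(\tfrac12+p)^{\otimes n},\mathrm{Bern}(\tfrac12-p)^{\otimes n}) > \tfrac15$, and then Pinsker's inequality, tensorization of KL, and $D_{\mathrm{KL}}(\mathrm{Bern}(\tfrac12+p)\,\|\,\mathrm{Bern}(\tfrac12-p)) = O(p^2)$ again give $n = \Omega(1/p^2)$.
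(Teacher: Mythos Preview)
Your proposal is correct and takes essentially the same approach as the paper: both place the uniform prior on $V$, lower-bound $I(V;\text{data})$ by $\Omega(d)$ via the observation that on the success event $V$ is confined to a Hamming ball of radius $O(d/\log d)$ (the paper writes this as a direct conditional-entropy computation $H(V\mid\hat X,E)\le\log|S|$, you package it as the Hamming-ball Fano inequality after discretizing $\hat\mu\mapsto\sign(\hat\mu)$), and upper-bound the same quantity by $O(ndp^2)$ via coordinate independence and the quadratic estimate $1-H_2(\tfrac12+p)=O(p^2)$. The only substantively different element is your alternative Le Cam route at the end, which the paper does not use; it trades the $d$-bit recovery argument for a single-coordinate two-point test and is arguably more elementary, though it relies on the same product structure to discard the other $d-1$ coordinates.
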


\begin{lemma}[Chain rule of mutual information, Theorem 2.5.2 of \cite{thomas2006elements}]\label{lem:mutual-info-chain}
For any $n>0$ and any random variables $V,X^{(1)},\ldots,X^{(n)}$, we have
\[
I(V; X^{(1)}, \dots, X^{(n)}) = \sum_{i=1}^{n} I(V; X^{(i)} | X^{(1)}, \dots, X^{(i-1)}).
\]
\end{lemma}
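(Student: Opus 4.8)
The plan is to unfold every term into plain Shannon entropies using the definitions fixed in Section~\ref{subsec:general-notations} and observe that the resulting sum telescopes. Recall that the paper defines $H(B \mid A) \defeq H(A, B) - H(A)$ and $I(A; B) \defeq H(A) + H(B) - H(A, B)$; by the same convention, the conditional mutual information on the right-hand side should be read as $I(V; X^{(i)} \mid X^{(<i)}) \defeq H(X^{(i)} \mid X^{(<i)}) - H(X^{(i)} \mid X^{(<i)}, V)$, where I abbreviate $X^{(<i)} \defeq (X^{(1)}, \ldots, X^{(i-1)})$, the empty tuple when $i = 1$, with $H$ of the empty tuple equal to $0$, and $X^{(\le i)} \defeq (X^{(1)}, \ldots, X^{(i)})$.

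First I would rewrite a single summand purely in terms of joint entropies. Applying the definition of conditional entropy to each of the two conditional-entropy terms,
\[
I\bigl(V; X^{(i)} \mid X^{(<i)}\bigr) = \Bigl( H\bigl(X^{(\le i)}\bigr) - H\bigl(X^{(<i)}\bigr) \Bigr) - \Bigl( H\bigl(X^{(\le i)}, V\bigr) - H\bigl(X^{(<i)}, V\bigr) \Bigr).
\]

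Next I would sum this identity over $i = 1, \ldots, n$. The terms $H(X^{(\le i)}) - H(X^{(<i)})$ telescope to $H(X^{(\le n)}) - H(X^{(<1)}) = H(X^{(1)}, \ldots, X^{(n)})$, and likewise the terms $H(X^{(\le i)}, V) - H(X^{(<i)}, V)$ telescope to $H(X^{(\le n)}, V) - H(X^{(<1)}, V) = H(V, X^{(1)}, \ldots, X^{(n)}) - H(V)$. Hence
\[
\sum_{i=1}^n I\bigl(V; X^{(i)} \mid X^{(<i)}\bigr) = H\bigl(X^{(1)}, \ldots, X^{(n)}\bigr) + H(V) - H\bigl(V, X^{(1)}, \ldots, X^{(n)}\bigr),
\]
and the right-hand side is exactly $I(V; X^{(1)}, \ldots, X^{(n)})$ by the definition of mutual information, which completes the proof.

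There is no real obstacle here: the statement is a classical identity (Theorem 2.5.2 of \cite{thomas2006elements}) and the argument is essentially a one-line telescoping once the conditional-mutual-information and conditional-entropy symbols are expanded. The only point requiring a moment of care is the bookkeeping of the conditioning tuples — in particular handling the $i = 1$ term, where $X^{(<1)}$ is empty so that $I(V; X^{(1)} \mid X^{(<1)})$ collapses to the unconditional $I(V; X^{(1)})$ — together with fixing, as above, the convention for conditional mutual information, which the paper uses but does not separately define. An alternative plan, if one prefers to avoid introducing that convention explicitly, is to first prove the entropy chain rule $H(Y^{(1)}, \ldots, Y^{(n)}) = \sum_{i} H(Y^{(i)} \mid Y^{(<i)})$ by induction on $n$ from $H(A,B) = H(A) + H(B \mid A)$, apply it once unconditionally and once with every term additionally conditioned on $V$, and subtract; this yields the claim with each summand manifestly of the form $H(X^{(i)} \mid X^{(<i)}) - H(X^{(i)} \mid X^{(<i)}, V)$.
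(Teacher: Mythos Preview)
Your proof is correct; it is the standard telescoping argument for the chain rule. Note, however, that the paper does not actually prove this lemma --- it is stated with a citation to Theorem~2.5.2 of \cite{thomas2006elements} and used as a black box in the proof of Lemma~\ref{lemma:sample_lb_mean_est}, so there is no paper proof to compare against.
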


\begin{proof}[Proof of Lemma~\ref{lemma:sample_lb_mean_est}]
Consider the random variable $X \sim P_v$ defined in Eq.~\ref{eqn:isotropic-mean-estimation-instance} where $v$ is chosen uniformly at random from $\{ \pm 1 \}^d$. For convenience, let $p = 8 \tilde{\epsilon} \sqrt{\log d}/\sigmaE$ and $\tilde{\sigma}_E=\frac{\sigmaE}{2\sqrt{d}}$.
Observe that $V \to (X^{(1)}, \dots, X^{(n)}) \to \hat{X}$ is a Markov chain, regardless of the algorithm used to construct $\hat{X}$. Therefore, by the data processing inequality,
\begin{equation}
\label{eqn:data_processing}
    I(V; \hat{X}) \leq I(V; (X^{(1)}, \dots, X^{(n)})). 
\end{equation}
where $I(V;\hat{X})$ is the mutual information between $V$ and $\hat{X}$, as defined in Section~\ref{subsec:general-notations}. We will prove that if an algorithm outputs $\hat{X}$ such that $\|\hat{X}-\E_{\vdist}[X]\|\leq \tilde{\epsilon}$ with success probability at least $2/3$ then,
\begin{equation}
\label{eqn:mutual_info_lb}
    I(V; \hat{X}) \geq d/6.
\end{equation}
On the other hand, we will prove that
\begin{equation}
\label{eqn:mutual_info_ub}
    I(V; (X^{(1)}, \dots, X^{(n)})) \leq 28 nd p^2. 
\end{equation}
Assuming Eq.~\ref{eqn:mutual_info_lb} and Eq.~\ref{eqn:mutual_info_ub} for now and combining them with Eq.~\ref{eqn:data_processing} we have $d/6 \leq 28 n d p^2$. Therefore,
\begin{equation*}
    n \geq \frac{1}{6 \cdot 28 p^2} = \Omega\Big(\frac{\sigmaE^2 }{\tilde{\epsilon}^2 \log d}\Big).
\end{equation*}
\newline
Proof of Eq.~\ref{eqn:mutual_info_lb}: Suppose an algorithm outputs $\hat{X}$ satisfying $\|\hat{X}-\E_{\vdist}[X]\|\leq \tilde{\epsilon}$ with success probability at least $2/3$. Let $E$ be the event that $\|\hat{X}-\E_{\vdist}[X]\|\leq \tilde{\epsilon}$. We have
\begin{align*}
    I(V; \hat{X}) & = H(V) - H(V \lvert \hat{X}) \\
    & = H(V) - \left( H(V \lvert \hat{X}, E) p(E) + H(V \lvert  \hat{X}, \neg{E}) (1 - p(E)) \right) \\
    & \geq H(V) - \left( H(V \lvert \hat{X}, E) p(E) + H(V) (1 - p(E)) \right) \\
    & = p(E)\left( H(V) - H(V \lvert \hat{X}, E) \right).
\end{align*}
 Observe that $p(E) \geq 2/3$ and $H(V) = d$. To compute $H(V \lvert \hat{X}, E)$ we consider the set
 \begin{align*}
     S \defeq \left \{ v \in \{\pm 1 \}^d \textrm{ s.t. } \norm{\hat{X} - \E_{\vdist}[X]}_2 \leq \tilde{\epsilon} \right \}, 
 \end{align*}
 and note that $H(V \lvert \hat{X}, E) \leq \log \abs{S}$. To bound the size of this set $S$, fix some $v_0 \in S$ and consider,
 \begin{align*}
     S' \defeq \left \{ v \in \{\pm 1 \}^d \textrm{ s.t. } \norm{v-v_0}_2 \leq 2 \tilde{\epsilon}/(p \tilde{\sigma}_E) \right \}.
 \end{align*}
Using the fact that $\E_{\vdist}[X] = p \tilde{\sigma}_E v$ and the triangle inequality, we have $S \subseteq S'$. To bound the cardinality of $S'$ note that for $v \in S'$, $v$ cannot differ from $v_0$ on more than $N = \lceil \tilde{\epsilon}^2/(p^2 \tilde{\sigma}_E^2) \rceil=\lceil d/(16 \log d) \rceil$ coordinates. Since $N \leq d/2$,
 \begin{align*}
     \abs{S'} \leq \sum_{j = 1}^{N} {\choose{d}{j}} \leq N {\choose{d}{N}} \leq 2^{N \log d + \log N}.
 \end{align*}
Thus for $d \geq 2$,
  \begin{align*}
     H(V \lvert \hat{X}, E) & \leq N \log d + \log(N) \\
     & \leq \left( \frac{d}{16 \log d} + 1 \right) \log(d) + \log \left( \frac{d}{16 \log d } + 1 \right) \\
     & \leq 3d/4.
 \end{align*}
We conclude that 
 \begin{align*}
     I(V; \hat{X}) \geq d/6.
 \end{align*}
This concludes the proof of Eq.~\ref{eqn:mutual_info_lb}. \newline
\newline
Proof of Eq.~\ref{eqn:mutual_info_ub}: Consider $I(V; X^{(1)}, \dots, X^{(n)})$. By Lemma~\ref{lem:mutual-info-chain} and the fact that $X^{(i)}$ and $X^{(j)}$ are identically and independently distributed conditioned on $V$, we have
\begin{equation*}
    I(V; X^{(1)}, \dots, X^{(n)}) = \sum_{i = 1}^n  I(V; X^{(i)}) = n I(V; X).
\end{equation*}
Similarly, since the coordinates of $X$ are independent we have
\begin{equation*}
    I(V; X^{(1)}, \dots, X^{(n)}) = n \sum_{i = 1}^d I(V; X_i).
\end{equation*}
Next we give an expression for $I(V; X_i)$ in terms of $p$:
\begin{align*}
    I(V; X_i) & = H(X_i) - H(X_i \lvert V) \\
    & = 1 - h_2\left( \frac{1}{2} + p\right),
\end{align*}
where $h_2(t)$ is the binary entropy: $ h_2(t) \coloneqq -t \log(t) - (1 - t)  \log (1 - t)$. 
We claim
\begin{equation}
\label{eqn:binary_ent_bound}
    h_2\left( \frac{1}{2} + p\right) \geq 1 - 28p^2.
\end{equation}
Assuming Eq.~\ref{eqn:binary_ent_bound} we have,
\begin{equation*}
     I(V; X^{(1)}, \dots, X^{(n)}) = nd I(V; X_i) \leq 28ndp^2.
\end{equation*}
It remains to show that $h_2\left( \frac{1}{2} + p\right) \geq 1 - 28p^2$. Observe the following equality:
\begin{align*}
    h_2\left( \frac{1}{2} + p\right) & = - \left( \left(\frac{1}{2} + p \right) \log \left( \frac{1}{2} + p\right) + \left(\frac{1}{2} - p \right) \log \left(\frac{1}{2} - p \right) \right) \\
    & = - \left( \left(\frac{1}{2} + p \right) \log \left( \frac{1}{2}\left( 1 + 2 p\right) \right) + \left(\frac{1}{2} - p \right) \log \left(\frac{1}{2} \left( 1  - 2 p \right) \right) \right) \\
    & = - \left( \log \left( \frac{1}{2} \right) + \left(\frac{1}{2} + p \right) \log \left(  1 + 2 p \right) + \left(\frac{1}{2} - p \right) \log \left(1  - 2 p \right) \right) \\
    & = 1 - \left(  \frac{1}{2} \log \left( 1 - 4p^2 \right) + p \log \left(\frac{1 + 2 p}{1 - 2p} \right) \right).
\end{align*}
So it is equivalent to upper bound the following by $28p^2$:
\begin{align*}
    \frac{1}{2} \log \left( 1 - 4p^2 \right) + p \log \left(\frac{1 + 2 p}{1 - 2p} \right).
\end{align*}
Since $p \geq 0$, $\frac{1}{2} \log \left( 1 - 4p^2 \right) \leq 0$. Hence, it suffices to show that $\log \left((1 + 2 p)/(1 - 2p) \right) \leq 28p$. To do this we use the facts that $1/(1-x) \leq 1 + 2x$ for $x \leq 1/4$, and $\log(1 + x) \leq 2x$ for any $x \geq 0$, and finally $0 \leq p \leq 1/4$:
\begin{align*}
    \log \left(\frac{1 + 2 p}{1 - 2p} \right) & \leq \log \left((1 + 2p)(1 + 4p) \right) \\
    & \leq \log \left(1 + 14p \right) \\
    & \leq 28 p.
\end{align*}
\end{proof}

\subsection{Proving Theorem~\ref{thm:SCO-subexp-lower-bound} and Corollary~\ref{cor:SCO-lower}}\label{sec:subexp-mean-estimation-to-optimization}

We establish our lower bound for SCO with sub-exponential noise by establishing a correspondence between Problem~\ref{prob:mean-estimation-isotropic} and solving Problem~\ref{prob:SCO} given a $\sigmaE$-ESGO. Specifically, for any random variable $X$ in Problem~\ref{prob:mean-estimation-isotropic} with $\tilde{\epsilon}=48\epsilon\sqrt{\log d}/R$, we design the following convex function whose optimal point is related to $\E[X]$,
\begin{align}\label{eqn:barf-defn-combined}
\bar{f}(x)\coloneqq \E[f_{X}(x)],
\end{align}
where
\begin{align}
f_{X}(x)\coloneqq
-\frac{1}{3}\langle x,X\rangle+\frac{2L}{3}\cdot\max\left\{0,\|x\|-\frac{R}{2}\right\}.
\end{align}
\begin{lemma}
\label{lemma:properties_of_lb_f}
Denote $w \coloneqq \E[X]/\norm{\E[X]}$. Given that $\E[X]\leq L$, the function $\bar{f}$ defined in \eqn{barf-defn-combined} has the following properties:
\begin{enumerate}
\item $\bar{f}$ is convex.
\item $\bar{f}$ is minimized at $x^*=\frac{R}{2}w$.
\item Every $\epsilon$-optimum $x$ of $\bar{f}$ satisfies
\begin{align*}
\left( \frac{x}{\norm{x}} \right)^{\top}w \geq 1 - \frac{6 \epsilon}{R\E[X]}.
\end{align*}
\item For 
\begin{align}\label{eqn:hatg-defn}
\hat{g}_{X}(x)\coloneqq -\frac{X}{3}+\frac{2L}{3}\cdot \mathbf{1} \left\{\|x\|-\frac{R}{2} > 0 \right\}\cdot\frac{x}{\|x\|},
\end{align}
we have
\begin{align*}
\underset{X}{\E}[\hat{g}_{X}(x)]\in\partial\bar{f}(x),\quad\forall x\in\R^d.
\end{align*}
\item For any $t>0$ and unit vector $u \in \R^d$, we have 
    \begin{align*}
        \P \insquare*{ \abs{ \inangle*{u,\hat{g}_X(x)- \grad f(x)} } \geq t } \le 2\exp(-t\sqrt{d}/\sigmaE).
    \end{align*}
\end{enumerate}
\end{lemma}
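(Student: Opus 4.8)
The plan is to handle the five items in order, all reduced to the closed form $\bar f(x) = -\tfrac13\langle x,\mu\rangle + \tfrac{2L}{3}\max\{0,\|x\|-R/2\}$ obtained by linearity of expectation; throughout I write $\mu\defeq\E[X]$, $m\defeq\|\mu\|$, $w\defeq\mu/m$, interpret the hypothesis $\E[X]\le L$ as $m\le L$, and interpret $\grad f$ in item 5 as the specific subgradient $\E_X[\hat{g}_X(x)]$ produced by item 4 (which is exactly what makes $\hat{g}_X$ an SGO for $\bar f$, after extending to nondifferentiable $\bar f$ as in the paper's convention). Item 1 is immediate: $-\tfrac13\langle x,\mu\rangle$ is affine, $x\mapsto\|x\|-R/2$ is convex, and $t\mapsto\max\{0,t\}$ is convex and nondecreasing, so $\bar f$ is a nonnegative combination of convex functions.

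For items 2 and 4, I would first record the subdifferential of $g(x)\defeq\max\{0,\|x\|-R/2\}$: it equals $\{0\}$ when $\|x\|<R/2$ (in particular at $x=0$), equals $\{x/\|x\|\}$ when $\|x\|>R/2$, and equals the convex hull $\{\lambda x/\|x\|:\lambda\in[0,1]\}$ when $\|x\|=R/2$. By the sum rule, $\partial\bar f(x)=-\tfrac13\mu+\tfrac{2L}{3}\partial g(x)$. Item 4 then follows by checking in each of these cases that $\mathbf{1}\{\|x\|>R/2\}\cdot x/\|x\|\in\partial g(x)$; the only case needing thought is $\|x\|=R/2$, where this vector is $0$ and $0\in\partial g(x)$. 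For item 2, since $\|x^*\|=\|\tfrac R2 w\|=R/2$ we get $\partial\bar f(x^*)=\{\tfrac{2L\lambda-m}{3}\,w:\lambda\in[0,1]\}$, and $\lambda=m/(2L)\in[0,1]$ (using $m\le L$) gives $0\in\partial\bar f(x^*)$, so $x^*$ minimizes $\bar f$ with $\bar f(x^*)=-\tfrac{Rm}{6}$ (one can additionally check this minimizer is unique, but only the containment is needed).

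Item 3 is the main computation. For an $\epsilon$-optimum $x\neq0$, write $x=\rho\hat x$ with $\rho=\|x\|$ and $c\defeq\langle\hat x,w\rangle$, so $\langle x,\mu\rangle=m\rho c$. I would split on $\rho\le R/2$, where $\bar f(x)=-\tfrac{m\rho c}{3}$, versus $\rho>R/2$, where $\bar f(x)=\tfrac\rho3(2L-mc)-\tfrac{LR}{3}$ with $2L-mc>0$ since $mc\le m\le L$. In the first case $\rho\le R/2$ gives $\bar f(x)\ge-\tfrac{Rmc}{6}$ as long as $c\ge0$; in the second, monotonicity in $\rho$ gives $\bar f(x)>-\tfrac{Rmc}{6}$ unconditionally. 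Combining $\bar f(x)\ge-\tfrac{Rmc}{6}$ with $\epsilon$-optimality $\bar f(x)\le\bar f(x^*)+\epsilon=-\tfrac{Rm}{6}+\epsilon$ and rearranging yields $c\ge1-\tfrac{6\epsilon}{Rm}$, which is the claim once $m=\|\E[X]\|$ is substituted. To get the needed $c\ge0$ in the first case, I would note that if $c<0$ then $\bar f(x)>0>-\tfrac{Rm}{6}+\epsilon$ whenever $\epsilon<\tfrac{Rm}{6}$, contradicting $\epsilon$-optimality — so it suffices to argue in the regime $\epsilon<\tfrac{R\|\E[X]\|}{6}$, which is the only interesting one (otherwise the origin is already $\epsilon$-optimal) and which holds in the intended application.

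Item 5 is then essentially immediate from item 4: the term $\tfrac{2L}{3}\mathbf{1}\{\|x\|>R/2\}\,x/\|x\|$ is deterministic, so $\hat{g}_X(x)-\E_X[\hat{g}_X(x)]=-\tfrac13(X-\E[X])$; hence for a unit vector $u$, $|\langle u,\hat{g}_X(x)-\grad f(x)\rangle|=\tfrac13|\langle u,X-\E[X]\rangle|$, and therefore $\P[|\langle u,\hat{g}_X(x)-\grad f(x)\rangle|\ge t]=\P[|\langle u,X-\E[X]\rangle|\ge3t]\le2\exp(-3t\sqrt d/\sigmaE)\le2\exp(-t\sqrt d/\sigmaE)$ using the sub-exponential tail assumed of $X$ in Problem~\ref{prob:mean-estimation-isotropic}. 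I expect the only genuine obstacle to be item 3: keeping the two radial regimes and the sign of $c$ straight, and noticing the implicit small-$\epsilon$ restriction under which the stated bound actually holds; the rest is routine, modulo care with the subdifferential of $g$ on the sphere $\|x\|=R/2$ for items 2 and 4.
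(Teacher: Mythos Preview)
Your proof is correct and follows essentially the same approach as the paper: reduce $\bar f$ to its closed form, analyze it radially, and observe that the stochastic part of $\hat g_X$ is $-\tfrac13(X-\E[X])$. The only differences are cosmetic: for item~2 you check $0\in\partial\bar f(x^*)$ via the subdifferential while the paper compares $\bar f(cx)$ across $c$; for item~3 you bound $\bar f(x)$ directly in the two radial regimes while the paper first projects $x$ to the sphere $\|x\|=R/2$ and then compares; and you are slightly more careful than the paper in flagging the implicit small-$\epsilon$ assumption (the paper's ``since we must have $x^\top w\ge 0$'' is likewise only valid when $\epsilon<R\|\E[X]\|/6$) and in handling $\partial g$ on the sphere.
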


\begin{proof}
Proof of $(1)$: By linearity of expectation,
\begin{equation}
\label{eqn:f_bar}
    \bar{f}(x) = - \frac{1}{3} \left\langle x, \E[X] \right\rangle +\frac{2L}{3}\cdot\max\left\{0,\|x\|-\frac{R}{2}\right\}.
\end{equation}
This is a convex function of $x$ since it is the sum of a linear function and the max of two convex functions. \newline
\newline
Proof of $(2)$: Suppose $\norm{x} = 1$. We claim that for any $c > R/2$:
\begin{equation*}
    \bar{f}(c x) \geq \bar{f}\left( \frac{R}{2} x \right) 
\end{equation*}
given that 
\begin{align*}
    \bar{f}(cx) \geq \bar{f}(Rx/2) & \iff  - \frac{1}{3} c x^{\top} \E[X] + \frac{2L}{3} \left( c - \frac{R}{2} \right) \geq - \frac{1}{3} \cdot\frac{R}{2}  x^{\top} \E[X]  \\
    & \iff   \frac{2L}{3} \left( c - \frac{R}{2} \right) \geq \frac{1}{3}  x^{\top} \E[X]  \left(c - \frac{R}{2} \right)  \\
    & \iff   L \geq  \frac{1}{2}  x^{\top} \E[X]. \\
\end{align*}
Since $\|\E[X]\| \leq L$, the last line holds. Recalling $w = \E[X]/\norm{\E[X]}$, notice that if $x$ satisfies $x^{\top}w \geq 0$, we can decrease the function value by increasing the norm of $x$ up to the value of $R/2$. That is, for any $c < R/2$ we have
\begin{equation}
\label{eqn:best_radius}
    \bar{f}(c x) > \bar{f}\left( \frac{R}{2} x \right).
\end{equation}
Therefore, since the optimal $x$ satisfies that $x^{\top}w \geq 0$, we must have $x^*=(R/2)w$. \newline
\newline
Proof of $(3)$: Given any $\epsilon$-optimum $x$, since we must have that $x^{\top} w \geq 0$, it follows by Eq.~\ref{eqn:best_radius} that $\frac{R}{2}\cdot\frac{x}{\norm{x}}$ is also an $\epsilon$-optimum. Therefore,
\begin{align*}
    \bar{f}\left( \frac{R}{2}\cdot\frac{x}{\|x\|}\right) - \bar{f}(x^*) = \frac{1}{3} \left( x^{*} - \frac{R}{2 } \frac{x}{\norm{x}} \right)^{\top} \E[X] \leq \epsilon.  
\end{align*}
Since $x^*=(R/2)w$ we equivalently have
\begin{align*}
    \frac{R}{6} \left( w - \frac{x}{\norm{x}}  \right)^{\top} \E[X] \leq \epsilon. 
\end{align*}
Using that $\norm{w}^2 = 1$ and rearranging terms we get
\begin{align*}
      \left( \frac{x}{\norm{x}}  \right)^{\top} w \geq 1 -  \frac{6\epsilon}{R\|\E[X]\|}.
\end{align*}
\newline
Proof of $(4)$: By Eq.~\ref{eqn:f_bar} and linearity of expectation we have
\begin{equation}
    \E_X[\hat{g}_{X}(x)] = -\frac{ \E[X]}{3}+\frac{2L}{3}\cdot \mathbf{1} \left\{\|x\|-\frac{R}{2} > 0 \right\}\cdot\frac{x}{\|x\|} \in \partial\bar{f}(x).
\end{equation}
\newline
Proof of $(5)$: For any $x$, $\hat{g}_{X}(x)$ takes the form
\begin{equation}
    \hat{g}_{X}(x) = c_1 X + y,
\end{equation}
where $c_1 = -\frac{1}{3}$ and $y =\frac{2L}{3}\cdot \mathbf{1} \left\{\|x\|-\frac{R}{2} > 0 \right\}\cdot \frac{x}{\|x\|}$. Then by Lemma~\ref{lem:isotropic-mean-tail} we can conclude that
\begin{equation*}
    \P \insquare*{ \abs{ \inangle*{u,\hat{g}_X(x)- \grad f(x)} } \geq t } \le \exp(-t\sqrt{d}/\sigmaE)
\end{equation*}
for any $t>0$ and any unit vector $u\in\R^d$.
\end{proof}

The next lemma establishes a lower bound for Problem~\ref{prob:SCO} with access to actual gradients, or equivalently a $\sigmaE$-ESGO with $\sigmaE=0$.

\begin{lemma}[Theorem 3 of~\cite{garg2020no}]\label{lem:non-stochastic-lower}
Any algorithm that solves Problem~\ref{prob:SCO} with success probability at least $2/3$ must make at least $\Omega(\min\{RL/\epsilon^2,d\})$ queries.
\end{lemma}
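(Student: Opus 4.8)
The plan is to obtain this as a rescaled special case of the classical information‑based lower bound of Nemirovski–Yudin for nonsmooth convex optimization over a ball. First I would reduce to a normalized instance: if $g\colon\R^d\to\R$ is convex and $1$‑Lipschitz on $B_2(1)$, then $f(x)\defeq LR\cdot g(x/R)$ is convex and $L$‑Lipschitz, is minimized at a point of norm at most $R$, has subgradient at $x$ equal to $L$ times a subgradient of $g$ at $x/R$ (so one exact‑gradient query to $f$ costs one to $g$), and every $\epsilon$‑optimal point of $f$ yields an $\epsilon_0$‑optimal point of $g$, where $\epsilon_0\defeq\epsilon/(LR)$. Hence a $T$‑query algorithm for Problem~\ref{prob:SCO} with an exact gradient oracle (the $\sigmaE=0$ case of an ESGO) gives a $T$‑query algorithm that finds an $\epsilon_0$‑optimal point of a $1$‑Lipschitz convex function on $B_2(1)$, so it suffices to lower bound the number of queries for the latter by $\Omega(\min\{\epsilon_0^{-2},d\})=\Omega(\min\{R^2L^2/\epsilon^2,d\})$. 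Since the claim is for randomized algorithms, I would prove the bound against a \emph{distribution} over hard instances and invoke Yao's principle: a $T$‑query randomized algorithm succeeding with probability $\ge 2/3$ on every instance would, after fixing its randomness favorably, give a deterministic $T$‑query algorithm succeeding on a $\ge 2/3$ fraction of the distribution, which I will contradict.

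For the distribution I would use the standard ``hidden directions'' construction. Fix small absolute constants $c,c'$, set $M\defeq \min\{\lfloor c\,\epsilon_0^{-2}\rfloor,\lfloor c'd/\log d\rfloor\}$, draw $Z_1,\dots,Z_M$ a uniformly random orthonormal system in $\R^d$, and let $f_Z(x)\defeq\max_{i\in[M]}\langle Z_i,x\rangle$ on $B_2(1)$ (one may add small index‑dependent offsets $-(i-1)\gamma$ to make the resisting oracle below canonical, and a negligible strongly convex perturbation to enforce differentiability as in the paper's differentiability footnote, neither affecting the bounds). Then $f_Z$ is convex and $1$‑Lipschitz, a subgradient at $x$ is $Z_{i(x)}$ for any maximizing index $i(x)$, and $f_Z^\star=-1/\sqrt M$ is attained at $-\tfrac1{\sqrt M}\sum_i Z_i$; note $f_Z^\star\le-2\epsilon_0$ by the choice of $M$, so an $\epsilon_0$‑optimal output $\hat x$ must satisfy $\langle Z_i,\hat x\rangle\le f_Z(\hat x)\le-\epsilon_0$ for \emph{every} $i\in[M]$.

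The core step is a \emph{resisting oracle}: on a query at $x$, return $Z_{i^\star(x)}$ for the smallest index $i^\star(x)$ that is a valid subgradient (the offsets make this unambiguous), which stalls the revelation of high‑index directions. Using rotational invariance of the law of $Z$, one argues by induction on the query count $k$ that the transcript after $k$ queries depends in distribution only on $Z_1,\dots,Z_k$, with $Z_{k+1},\dots,Z_M$ a uniformly random orthonormal completion conditionally independent of the transcript; thus the output $\hat x$ is, given the transcript, independent of the hidden directions. Concentration for the projection of a fixed unit vector onto a uniformly random subspace of dimension at least $d/2$ then yields $|\langle Z_j,\hat x\rangle|\le O(\sqrt{\log M/d})$ simultaneously for all hidden $j$ with probability at least $9/10$. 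If $T=k<M$ there is a hidden index $j$, so the suboptimality of $\hat x$ is at least $1/\sqrt M-O(\sqrt{\log M/d})$, which by the choice of $c,c'$ exceeds $\epsilon_0$ in both the $\epsilon_0$‑limited and the dimension‑limited regimes; hence $\hat x$ is not $\epsilon_0$‑optimal. Averaging over $Z$ and applying Yao gives $T=\Omega(M)=\tilde\Omega(\min\{\epsilon_0^{-2},d\})=\tilde\Omega(\min\{R^2L^2/\epsilon^2,d\})$.

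The main obstacle is making the resisting‑oracle step rigorous: (i) the index‑selection/slack rule must guarantee that the returned vectors are genuinely consistent with a single fixed function in the support of the distribution, not merely approximately; (ii) the claim that the transcript sees only $Z_1,\dots,Z_k$ must be proved for \emph{adaptive} query strategies, which is exactly where exchangeability/rotational invariance is indispensable and the bookkeeping is most delicate; and (iii) the concentration slack must be carried through quantitatively, which is why $M$ is forced to be only a small constant times $\min\{\epsilon_0^{-2},d/\log d\}$. Removing the logarithm in the dimension‑limited regime to get the clean $\Omega(\min\{R^2L^2/\epsilon^2,d\})$ as stated requires the sharper analysis of \cite{garg2020no} (whose actual novelty is that this entire skeleton survives for \emph{quantum} query algorithms); for the present classical statement the Nemirovski–Yudin‑style construction above is what I would write out in full.
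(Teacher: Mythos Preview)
The paper gives no proof of this lemma: it is stated as Theorem~3 of \cite{garg2020no} and used as a black box. Your proposal fills in an actual argument via the Nemirovski--Yudin hidden-directions construction plus Yao's principle, which is the classical route and is correct in outline. Two small caveats: (i) for adaptive queries the transcript after $k$ steps need not literally depend only on $Z_1,\dots,Z_k$ in that order---what holds is that at most $k$ of the $Z_i$ have been revealed, and by rotational invariance the unrevealed ones remain uniform in the orthogonal complement, which is all you need; (ii) as you acknowledge, your argument yields $\tilde\Omega(\min\{R^2L^2/\epsilon^2,d\})$ with a $\log d$ loss in the dimension-limited regime, and removing it requires the sharper analysis of \cite{garg2020no}. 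Since the paper only feeds this lemma into $\tilde\Omega$ statements (Theorem~\ref{thm:SCO-subexp-lower-bound} and Corollary~\ref{cor:SCO-lower}), the polylog loss is immaterial downstream. (Incidentally, the lemma as printed has $RL/\epsilon^2$; your scaling produces $R^2L^2/\epsilon^2$, which is what the rest of the paper uses and is dimensionally correct---this looks like a typo in the statement rather than an error on your part.)
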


\restateESGOLower*

\begin{proof}
Consider the function $\bar{f}(\cdot)$ defined in Eq.~\ref{eqn:barf-defn-combined} with the random variable $X \sim P_v$ defined in Eq.~\ref{eqn:isotropic-mean-estimation-instance}, where $v$ is chosen uniformly at random from $\{ \pm 1 \}^d$. Let $x$ denote the output of the algorithm after making $n$ queries. If $x$ is an $\epsilon$-optimal point, then by Lemma~\ref{lemma:properties_of_lb_f}, 
\begin{align*}
\left( \frac{x}{\norm{x}} \right)^{\top}\left(\frac{\E[X]}{\norm{\E[X]}} \right) \geq 1 - \frac{6 \epsilon}{R\|\E[X]\|}.
\end{align*}
Using the fact that, for any unit-norm vectors $u$ and $v$, $\norm{u - v}_2 = \sqrt{2 \left( 1 - u^{\top} v \right)}$, we have
\begin{equation*}
    \norm{ \frac{x}{\norm{x}} - \frac{\E[X]}{\norm{\E[X]}}}_2 \leq \sqrt{\frac{12 \epsilon}{R\|\E[X]\|}}.
\end{equation*}
For $X$ defined in Eq.~\ref{eqn:isotropic-mean-estimation-instance} we know that $\norm{\E[X]} = 4 \tilde{\epsilon} \sqrt{\log d}$, regardless of the value of $v$. Therefore, defining $\hat{X} \defeq \frac{\|\E[X]\|}{\norm{x}}\cdot x$, and set $\epsilon=\frac{\tilde{\epsilon}R}{48\sqrt{\log d}}$, we have
\begin{equation}
\label{eqn:issue2}
    \norm{ \hat{X} - \E[X]}_2 \leq \sqrt{12 \epsilon\|\E[X]\|/R} \leq \tilde{\epsilon}.
\end{equation}
Next, we observe that we can simulate the responses of the subgradient oracle defined in Eq.~\ref{eqn:hatg-defn} using only $n$ i.i.d. samples of $X \sim P_v$, which is a $\sigmaE$-ESGO by Lemma~\ref{lemma:properties_of_lb_f}. Therefore, any algorithm which can output an $\epsilon$-optimal point $x$ with probability at least $2/3$ can be used to construct a mean-estimation algorithm which outputs an estimate $\hat{X}$ that satisfies $\big\|\hat{X} - \E[X]\big\| \leq \tilde{\epsilon}$ with probability at least $2/3$.
Therefore by the hardness of mean estimation with isotropic noise, established in Lemma~\ref{lemma:sample_lb_mean_est}, we must have that 
\[
n \geq \Omega\Big(\frac{\sigmaE^2}{\tilde{\epsilon}^2 \log d}\Big)=\tilde{\Omega}(\sigmaE^2R^2/\epsilon^2).
\]
Combined with Lemma~\ref{lem:non-stochastic-lower}, we obtain the desired result.
\end{proof}

Finally, we obtain our lower bound for Problem~\ref{prob:SCO} given a $(\sigmaI, \delta)$-ISGO as a simple corollary.

\restateISGOLower*

\begin{proof}
By Lemma~\ref{lem:ESGO-to-ISGO}, any $\sigmaE$-ESGO is a $(\sigmaE\log(2/\delta),\delta)$-ISGO. The result then follows by setting $\sigmaE\leftarrow\sigmaI/\log(2/\delta)$ and applying Theorem~\ref{thm:SCO-subexp-lower-bound}.
\end{proof}

\section{Quantum isotropifier and an improved bound for quantum SCO}
\label{sec:quantum-results}

\subsection{Qubit notation and conventions.}
We use the notation $\ket{\cdot}$ to denote input or output registers composed of qubits that can exist in \textit{superpositions}. Specifically, given $m$ points $x_1, \ldots, x_m \in \mathbb{R}^d$ and a coefficient vector $\vect{c} \in \mathbb{C}^m$ such that $\sum_{i \in [m]} |c_i|^2 = 1$, the quantum register could be in the state $\ket{\psi} = \sum_{i \in [m]} c_i \ket{x_i}$, which represents a superposition over all $m$ points simultaneously. Upon measuring this state, the outcome will be $x_i$ with probability $|c_i|^2$. Moreover, to characterize a classical probability distribution $p$ over $\mathbb{R}^d$ in a quantum framework, we can prepare the quantum state $\int_{x \in \mathbb{R}^d} \sqrt{p(x)\d x} \ket{x}$, which we denote as the state over $\mathbb{R}^d$ with wave function $\sqrt{p(x)}$. Measuring this state will yield outcomes according to the probability density function $p$. When applicable, we use $\ket{\mathrm{garbage}(\cdot)}$ to denote possible garbage states.\footnote{The garbage state is the quantum counterpart of classical garbage information generated when preparing a classical random sample or a classical stochastic gradient, which, in general, cannot be erased or uncomputed. In this work, we consider a general model without any assumptions about the garbage state. See e.g.,~\cite{gilyen2020distributional,sidford2024quantum} for a similar discussion on the standard use of garbage quantum states.

Throughout this paper, whenever we query a quantum oracle that contains a garbage state, we do not assume we know its identity. Nevertheless, our algorithm requires that the garbage state be maintained coherently as part of the system to perform the inverse operation.}

Throughout this paper, we assume that any quantum oracle $\mathcal{O}$ is a unitary operation, and we can also access its inverse $\mathcal{O}^{-1}$ satisfying $\mathcal{O}^{-1}\mathcal{O} = \mathcal{O}\mathcal{O}^{-1} = I$. This is a standard assumption in prior works on quantum algorithms, see e.g.~\cite{cornelissen2022near,sidford2024quantum}.

\subsection{Bounded random variables}\label{sec:quantum-bounded}
In this subsection, we introduce our quantum multivariate mean estimation algorithm for bounded random variables whose error is small in any direction with high probability. This algorithm is a variant of \cite[Algorithm 2]{cornelissen2022near}. We begin by presenting some useful algorithmic components.

\begin{lemma}[Directional mean oracle, Proposition 3.2 of {\cite{cornelissen2022near}}]\label{lem:QDirectionalMean}
Suppose we have access to the quantum sampling oracle $\mathcal{O}_X$ of a bounded random variable $X$ satisfying $\|X\|\leq 1$. Then for any $\nu> 0$, there exists two procedures, $\mathtt{QDirectionalMean1}(X, m, \alpha, \nu)$ and $\mathtt{QDirectionalMean2}(X, m, \alpha, \nu)$, that respectively uses $\tilde{O}(m\sqrt{\E|X|}\log^2(1/\nu))$ and $\tilde{O}(m\log^2(1/\nu))$ queries, and output quantum states
\begin{align*}
\ket{\psi_{\mathrm{out,1}}}=\ket{\psi_{\mathrm{prod}}}+\ket{\perp_1},\qquad\ket{\psi_{\mathrm{out,2}}}=\ket{\psi_{\mathrm{prod}}}+\ket{\perp_2},
\end{align*}
where
\begin{align*}
\ket{\psi_{\mathrm{prod}}}=\bigotimes_{j=1}^d\Big(\frac{1}{\sqrt{m}}\sum_{g_j\in G_m}e^{im\alpha g_j\E[X_j]}\ket{g_j}\Big),
\end{align*}
and
\begin{align*}
\begin{cases}
\|\ket{\perp_1}\|\leq \nu+\sqrt{\underset{g\sim G_m^d}{\P}[\alpha\E|\langle g,X\rangle|> \E\|X\|]}+2\sqrt{m\alpha}d^{1/4}e^{-1/\alpha^2},\\
\|\ket{\perp_2}\|\leq \nu+\sqrt{\underset{g\sim G_m^d}{\P}[\alpha\E|\langle g,X\rangle|> 1]}+2\sqrt{m\alpha}d^{1/4}e^{-1/\alpha^2}.
\end{cases}
\end{align*}
\end{lemma}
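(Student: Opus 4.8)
\emph{Proof plan.} This lemma is a verbatim restatement of Proposition~3.2 of \cite{cornelissen2022near}, so the cleanest route in the paper is simply to invoke it; what follows is a sketch of the argument one would reconstruct. The heart of the construction is a \emph{directional phase oracle}: a unitary $\mathcal{P}_\alpha$ that, on input $\ket{g}$ with $g\in G_m^{\otimes d}$, approximately performs the map $\ket{g}\mapsto e^{im\alpha\inangle{g,\E[X]}}\ket{g}$ using $\tilde{O}(m\alpha)$ queries to $\mathcal{O}_X$ and $\mathcal{O}_X^{-1}$. The plan is to build $\mathcal{P}_\alpha$ in two stages. First, a single query to $\mathcal{O}_X$ prepares $\int_x\sqrt{p_X(x)\,\d x}\,\ket{x}\otimes\ket{\mathrm{garbage}(x)}$; computing $\inangle{g,x}$ reversibly into an ancilla, applying a controlled rotation by a tiny angle $\propto\inangle{g,x}$, and then uncomputing yields a block-encoding of the scalar $\E_{x\sim p_X}[e^{i\theta\inangle{g,x}}]$, which for small $\theta$ equals $e^{i\theta\inangle{g,\E[X]}}$ up to a multiplicative error of order $\theta^2\,\E[\inangle{g,X}^2]$. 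Second, I would amplify this small phase to the target phase $e^{im\alpha\inangle{g,\E[X]}}$ using the probability-to-phase / fractional-query machinery of Gilyén--Arunachalam--Wiebe (as used in \cite{cornelissen2022near,jordan2005fast}), which costs a further $\tilde{O}(m\alpha)$ queries, with a $\log^2(1/\nu)$ overhead for median-type boosting to accuracy $\nu$. The two variants $\mathtt{QDirectionalMean1}$ and $\mathtt{QDirectionalMean2}$ differ only in how the rotation is normalized: the adaptive normalization is valid on the event $\{\alpha\,\E\abs{\inangle{g,X}}\le\E\norm{X}\}$ and costs an extra $\sqrt{\E|X|}$ factor, while the uniform normalization is valid on $\{\alpha\,\E\abs{\inangle{g,X}}\le 1\}$ and costs $\tilde{O}(m\log^2(1/\nu))$.

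Given $\mathcal{P}_\alpha$, the algorithm applies it to the uniform grid superposition $\bigotimes_{j=1}^d\big(\frac{1}{\sqrt m}\sum_{g_j\in G_m}\ket{g_j}\big)$. On the grid points where $\mathcal{P}_\alpha$ is accurate this is exactly $\ket{\psi_{\mathrm{prod}}}=\bigotimes_{j=1}^d\big(\frac{1}{\sqrt m}\sum_{g_j\in G_m}e^{im\alpha g_j\E[X_j]}\ket{g_j}\big)$ (here the phase factorizes across coordinates because $\inangle{g,\E[X]}=\sum_j g_j\E[X_j]$), so the output has the claimed form $\ket{\psi_{\mathrm{prod}}}+\ket{\perp}$. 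The next step is to bound $\ket{\perp}$ by splitting it into three pieces: (i) the intrinsic approximation error of $\mathcal{P}_\alpha$, which the construction makes at most $\nu$; (ii) the amplitude supported on the grid points violating the normalization condition, where $\mathcal{P}_\alpha$ may act arbitrarily, so its contribution to $\norm{\ket{\perp}}$ is at most $\sqrt{\P_{g\sim G_m^d}[\alpha\,\E\abs{\inangle{g,X}}>\E\norm{X}]}$ (respectively with $1$ in place of $\E\norm{X}$); and (iii) the residual from replacing $e^{i\theta\inangle{g,X}}$ by its linearization and re-exponentiating across all $d$ coordinates on the discrete grid, which, after summing the per-coordinate errors and invoking the Gaussian-type tail bound associated with the grid/window, contributes $2\sqrt{m\alpha}\,d^{1/4}e^{-1/\alpha^2}$.

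The final bookkeeping is the query complexity: one call to $\mathcal{P}_\alpha$ costs $\tilde{O}(m\alpha)$ (uniform variant) or $\tilde{O}(m\alpha\sqrt{\E|X|})$ (adaptive variant) queries, times the $\log^2(1/\nu)$ boosting overhead; since $\alpha=\tilde\Theta(1)$ in the regime of interest this is absorbed into $\tilde{O}(\cdot)$, giving the stated $\tilde{O}(m\sqrt{\E|X|}\log^2(1/\nu))$ and $\tilde{O}(m\log^2(1/\nu))$ bounds. Crucially, the tensor-product structure over the $d$ coordinates does not multiply the cost, because a single call to $\mathcal{P}_\alpha$ evaluates the full inner product $\inangle{g,x}$ at once.

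I expect the main obstacle to be piece (iii) of the error analysis: controlling the accumulation of the phase-linearization/discretization error over all $d$ coordinates of the grid simultaneously and showing it collapses to the clean form $d^{1/4}e^{-1/\alpha^2}$. This is precisely where \cite{cornelissen2022near} does the delicate work, building on the improved quantum gradient-estimation analysis, and faithfully reproducing the constants would require tracking how the grid resolution $m$, the rotation scale $\alpha$, the bound $\norm{X}\le 1$, and the dimension $d$ trade off against one another; the honest route in our paper is therefore to cite Proposition~3.2 of \cite{cornelissen2022near} directly rather than re-derive these estimates.
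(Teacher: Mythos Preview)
Your proposal is correct and matches the paper's approach exactly: the paper states this lemma as a direct citation of Proposition~3.2 of \cite{cornelissen2022near} and provides no proof, so your recommendation to ``cite Proposition~3.2 of \cite{cornelissen2022near} directly rather than re-derive these estimates'' is precisely what the paper does. Your accompanying sketch is a reasonable high-level outline of the Cornelissen--Hamoudi--Jerbi construction, but since the paper treats this as a black-box import, no reconstruction is needed.
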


Using $\mathtt{QDirectionalMean\beta}\left(\cdot\right)$ as a subroutine, \cite{cornelissen2022near} develops a quantum multivariate mean estimation algorithms for bounded random variables. In this work, we provide an improved error analysis of this algorithm, based on which we apply the boosted unbiased phase estimation technique introduced in~\cite{van2023quantum} to suppress the bias in multivariate mean estimation.
\begin{lemma}[Boosted Unbiased Phase Estimation, Theorem 28 of {\cite{van2023quantum}}]\label{lem:BUPE}
Given $k$ copies of a quantum state $\ket{\psi}=\frac{1}{\sqrt{m}}\sum_{g\in G_m}e^{im\phi g}\ket{g}$ for some unknown phase $\phi$ satisfying $-\frac{2\pi}{3}\leq\phi\leq\frac{2\pi}{3}$, there is a procedure $\mathtt{BoostedUnbiasedPhaseEstimation}\big(\ket{\psi}^{\otimes k}\big)$ that returns an unbiased estimate $\hat{\phi}$ satisfying
\begin{align*}
\P[|\hat{\phi}-\phi|\leq 6/m]\geq 1-e^{-k}.
\end{align*}
\end{lemma}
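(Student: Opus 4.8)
The plan is to reduce the lemma to the construction of a good \emph{single-copy} estimator: from one copy of $\ket{\psi}$ I would extract a real random variable $\hat{\phi}_0$ that is \emph{exactly} unbiased, $\E[\hat{\phi}_0]=\phi$, and whose error is concentrated at scale $1/m$ with a sub-exponential tail, $\P[|\hat{\phi}_0-\phi|>t/m]\le 2\exp(-\Omega(t))$ for all $t>0$. Given such a primitive, the boosted estimator is just the empirical average over the $k$ copies, and both claimed properties then follow immediately: unbiasedness by linearity of expectation, and the high-probability bound by a Bernstein-type concentration inequality for sums of independent sub-exponential variables.

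For the single-copy primitive I would use \emph{dithered} windowed phase estimation. Draw a dither $s$ uniformly from one cell of the dual grid (of spacing $\Theta(1/m)$); apply the diagonal unitary $\ket{g}\mapsto e^{-i s m g}\ket{g}$, turning $\ket{\psi}$ into the state carrying phase $\phi-s$; apply a smooth spectral window $\ket{g}\mapsto w(g)\ket{g}$ followed by the inverse quantum Fourier transform over the dual grid; measure to obtain a grid point $y$; and output $\hat{\phi}_0:=y+s$. Two ingredients make this work. First, the hypothesis $-\tfrac{2\pi}{3}\le\phi\le\tfrac{2\pi}{3}$ keeps $\phi$ safely away from the $\pm\pi$ wraparound, so $y$ genuinely concentrates around $\phi-s$ with no aliasing. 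Second, the boxcar window yields the Dirichlet kernel, whose $\sim 1/(\mathrm{distance})^2$ tails make the first moment of $y$ diverge; taking instead a window with exponentially suppressed spectral tails (a Kaiser-type window, implementable with $O(\log m)$ overhead) gives $\P[|y-(\phi-s)|>t/m]\le \exp(-\Omega(t))$ and in particular a finite mean for $y$. Writing $b(\theta)$ for the (now well-defined) bias of the windowed estimation when the true phase equals $\theta$, the reflection symmetry of the window and of the grid forces the average of $b$ over one cell to vanish; hence $\E_s[\hat{\phi}_0]=\phi+\E_s[b(\phi-s)]=\phi$, so $\hat{\phi}_0$ is exactly unbiased while retaining the sub-exponential tail bound.

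With the primitive in hand the rest is routine. Run it independently on each of the $k$ copies to obtain $\hat{\phi}_0^{(1)},\dots,\hat{\phi}_0^{(k)}$ and output $\hat{\phi}:=\tfrac{1}{k}\sum_{i=1}^{k}\hat{\phi}_0^{(i)}$. By linearity $\E[\hat{\phi}]=\phi$. Since each $\hat{\phi}_0^{(i)}-\phi$ is sub-exponential at scale $\Theta(1/m)$, Bernstein's inequality for averages of i.i.d. sub-exponential variables gives $\P[|\hat{\phi}-\phi|>6/m]\le 2\exp(-\Omega(k))$; tuning the window sharpness (and, if needed, spending $O(1)$ copies per single shot) adjusts the constants to yield exactly the stated $6/m$ radius and $e^{-k}$ failure probability.

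The main obstacle is obtaining exact unbiasedness and fast concentration \emph{simultaneously} at the single-copy level, since these requirements are in tension: plain phase estimation has tails heavy enough that its first moment does not exist (so it cannot simply be averaged), whereas the obvious remedy of taking a median over the copies reintroduces bias. The delicate part of the argument is therefore the pair (window design, dithering analysis): one must exhibit a valid quantum filter $w$ with exponentially small spectral tails and then verify that the uniform dither cancels the residual discretization bias exactly, including its contribution from the tail mass. Everything downstream --- linearity of expectation, Bernstein's inequality, and chasing the constant $6$ in the radius and the constant $1$ in the exponent --- is elementary.
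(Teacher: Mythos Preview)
The paper does not prove this lemma; it is quoted verbatim as Theorem~28 of \cite{van2023quantum} and used as a black box. There is therefore nothing in the paper to compare your proposal against.

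That said, a brief comment on your sketch. The high-level architecture --- build an unbiased single-shot estimator with light tails, then average --- is the right kind of idea and is close in spirit to what \cite{van2023quantum} actually does. The delicate point, which you correctly identify, is obtaining \emph{exact} unbiasedness simultaneously with sub-exponential concentration. Your dithering argument is the standard mechanism, but the claim that ``reflection symmetry of the window and of the grid forces the average of $b$ over one cell to vanish'' is doing a lot of work and would need to be checked carefully: the measurement outcomes live on a finite grid, the wraparound (even away from $\pm\pi$) is not perfectly symmetric under a generic window, and you need the first moment of the tail to be finite and to cancel exactly, not just approximately. Likewise, the Kaiser/smooth-window step is plausible but you would need to exhibit a concrete unitary implementation and verify that the resulting kernel has the claimed sub-exponential decay with computable constants. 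Finally, ``tuning the window sharpness (and, if needed, spending $O(1)$ copies per single shot) adjusts the constants'' is where the real bookkeeping lives; getting precisely $6/m$ and $e^{-k}$ rather than $C/m$ and $e^{-ck}$ requires either a careful constant-tracking argument or a different boosting scheme than a plain average. None of these are fatal, but they are genuine gaps between your sketch and a complete proof.
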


\begin{proposition}\label{prop:bounded-product-distribution}
For any bounded random variable $X$ satisfying $\|X\|\leq 1$ and $\min_{X\neq 0}\|X\|\geq \epsilon$, Algorithm~\ref{algo:Debiased-QBounded} and its output $\hat{\mu}$ satisfy the following: 
\begin{enumerate}
\item $\|\hat{\mu}\|\leq\sqrt{d}$.
\item $\hat{\mu}$ is almost unbiased, i.e., $\|\E[\hat{\mu}]-\E[X]\|\leq\tilde{O}\big(\delta\sqrt{d}\big)$.
\item For any coordinate $j\in[d]$, we have
\begin{align*}
\P\big[|\hat{\mu}_j-\E[X_j]|\geq \epsilon\big]\leq \tilde{O}(\delta).
\end{align*}
\item For any unit vector $u\in\R^d$, we have
\begin{align*}
\P\big[|\langle u,\hat{\mu}-\E[X]\rangle|\geq \epsilon\log(1/\delta)\big]\leq \tilde{O}(\delta d).
\end{align*}
\item Algorithm~\ref{algo:Debiased-QBounded} uses $\tilde{O}\big(\sqrt{\E\|X\|}\log^3(1/\delta)/\epsilon\big)$ queries to $\mathcal{O}_X$ when $\beta=1$ and $\tilde{O}\big(\log^3(1/\delta)/\epsilon\big)$ queries when $\beta=2$.
\end{enumerate}
\end{proposition}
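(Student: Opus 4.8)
The plan is to analyze Algorithm~\ref{algo:Debiased-QBounded}, which I take to have the following structure. Fix $m = \tilde{O}(1/\epsilon)$, a scaling $\alpha$ with $1/\alpha^2$ equal to a large enough constant multiple of $\log\!\big(d/(\delta\epsilon)\big)$ (so $\alpha$ is polylogarithmically small, $m\alpha \ge 6/\epsilon$, and $\alpha \le \tfrac{2\pi}{3}$), a directional-mean accuracy $\nu = \tilde{\Theta}(\delta)$, and a repetition count $k = \tilde{\Theta}(\log(1/\delta))$; assume also $\epsilon \le 1/2$, which is without loss of generality since replacing $\epsilon$ by $\min\{\epsilon,1/2\}$ only strengthens items 1--4 and changes the query count by a constant. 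First call $\mathtt{QDirectionalMean\beta}(X,m,\alpha,\nu)$ a total of $k$ times to obtain $k$ copies of the state $\ket{\psi_{\mathrm{out},\beta}} = \ket{\psi_{\mathrm{prod}}} + \ket{\perp_\beta}$ of Lemma~\ref{lem:QDirectionalMean}; the $j$-th tensor factor of $\ket{\psi_{\mathrm{prod}}}$ is exactly the single-coordinate phase state required by Lemma~\ref{lem:BUPE}, with phase $\phi_j = \alpha\,\E[X_j] \in [-\tfrac{2\pi}{3},\tfrac{2\pi}{3}]$ since $|\E[X_j]| \le \E\|X\| \le 1$. Then, for each coordinate $j$, run $\mathtt{BoostedUnbiasedPhaseEstimation}$ on the $k$ corresponding factors to obtain $\hat\phi_j$, set $\hat\mu_j \gets \hat\phi_j/\alpha$, and output the Euclidean projection of $\hat\mu$ onto $B_2(\sqrt d)$; item 1 is then immediate, using $\|\E[X]\| \le 1 \le \sqrt d$.

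The workhorse is a reduction to an \emph{idealized} execution in which the same post-processing is applied to the exact product state $\ket{\psi_{\mathrm{prod}}}^{\otimes k}$. A hybrid argument for unit vectors gives $\big\|\ket{\psi_{\mathrm{out},\beta}}^{\otimes k} - \ket{\psi_{\mathrm{prod}}}^{\otimes k}\big\| \le O(k\|\ket{\perp_\beta}\|)$, so the output law differs in total variation from its idealized counterpart by $O(k\|\ket{\perp_\beta}\|)$. The quantitative heart of the argument --- our improved error analysis of the Cornelissen--Hamoudi directional mean oracle --- is that for these parameters $\|\ket{\perp_\beta}\| = \tilde{O}(\delta/k) = \tilde{O}(\delta)$: the $\nu$-term is $\tilde{O}(\delta)$ by choice; the term $2\sqrt{m\alpha}\,d^{1/4}e^{-1/\alpha^2}$ is $\tilde{O}(\delta)$ because $1/\alpha^2$ is a large multiple of $\log(d/(\delta\epsilon))$ while $m\alpha = \tilde{O}(1/\epsilon)$; and the bad-grid-point probability $\P_{g\sim G_m^d}[\alpha\,\E|\langle g,X\rangle| > \cdot]$ is $\tilde{O}(\delta^2)$. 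For the last bound I would write $(\E_X|\langle g,X\rangle|)^2 \le g^\top \Sigma g$ with $\Sigma = \E[XX^\top]$ (so $\tr\Sigma = \E\|X\|^2 \le 1$ and $\|\Sigma\|_{\mathrm{op}} \le 1$), observe that for $g$ uniform on $G_m^d$ the quantity $g^\top\Sigma g$ concentrates around its mean $\Theta(\tr\Sigma)$ with a Hanson--Wright-type sub-exponential tail of scale $\|\Sigma\|_{\mathrm{op}}$, and take $1/\alpha^2$ large (the hypothesis $\min_{X\ne 0}\|X\| \ge \epsilon$ enters here, to keep $\E\|X\|$ comparable to $\sqrt{\E\|X\|^2}$; for $\beta = 2$ the threshold is the constant $1$ and the estimate is cleaner). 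Hence it suffices to prove items 2--4 in the idealized model, up to an additive $\tilde{O}(\delta)$ slack folded into the $\tilde{O}(\cdot)$'s.

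In the idealized model the coordinates decouple: the $\hat\mu_j$ are mutually independent, and by Lemma~\ref{lem:BUPE} each $\hat\mu_j$ is an unbiased estimate of $\E[X_j]$ with $\P[|\hat\mu_j - \E[X_j]| \le 6/(m\alpha)] \ge 1 - e^{-k}$, where $6/(m\alpha) \le \epsilon$, $e^{-k} \le \tilde{O}(\delta)$, and $|\hat\mu_j| \le \tfrac{2\pi}{3\alpha} = \tilde{O}(1)$ always. The projection onto $B_2(\sqrt d)$ is active only if $\|\hat\mu\| > \sqrt d$; but on the event that all coordinates are $\epsilon$-accurate one has $\|\hat\mu\| \le \|\E[X]\| + \epsilon\sqrt d \le \sqrt d$ (for $\epsilon \le 1/2$, up to the trivial small-$d$ case), so activation forces $\tilde{\Omega}(d/\log(d/\delta))$ coordinate estimates to fail simultaneously, an event of probability $(\tilde{O}(\delta))^{\tilde{\Omega}(d/\log(d/\delta))}$, which is negligible for everything below; I therefore treat $\hat\mu = (\hat\mu_1,\dots,\hat\mu_d)$. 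Item 3 is then immediate (plus the $\tilde{O}(\delta)$ transfer). For item 2 the idealized coordinatewise bias is $0$ up to the projection, hence $\tilde{O}(\delta)$ per coordinate after transfer, giving $\|\E[\hat\mu]-\E[X]\| \le \tilde{O}(\delta\sqrt d)$. For item 4, fix a unit $u$ and condition on the event $\mathcal E$ that every coordinate estimate succeeds, so $\P[\mathcal E^c] \le \tilde{O}(\delta d)$ by a union bound; on $\mathcal E$ the projection is inactive, $|\hat\mu_j - \E[X_j]| \le \epsilon$, and the $\hat\mu_j - \E[X_j]$ stay independent with conditional means shifted from $0$ by at most $\tilde{O}(e^{-k}) = \tilde{O}(\delta)$, so $\langle u,\hat\mu - \E[X]\rangle = \sum_j u_j(\hat\mu_j - \E[X_j])$ is, up to an $\tilde{O}(\delta)\|u\|_1 = \tilde{O}(\delta\sqrt d)$ mean shift, a sum of independent mean-zero variables bounded by $\epsilon|u_j|$ --- hence sub-Gaussian with variance $\sum_j u_j^2\epsilon^2 = \epsilon^2$ --- and a sub-Gaussian tail bound gives $\P[|\langle u,\hat\mu-\E[X]\rangle| \ge \epsilon\log(1/\delta)] \le 2e^{-\Omega(\log^2(1/\delta))} + \P[\mathcal E^c] = \tilde{O}(\delta d)$. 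Finally, item 5 is a query count: $k = \tilde{O}(\log(1/\delta))$ calls to $\mathtt{QDirectionalMean\beta}$, each costing $\tilde{O}(m\sqrt{\E\|X\|}\log^2(1/\nu))$ for $\beta=1$ and $\tilde{O}(m\log^2(1/\nu))$ for $\beta=2$ by Lemma~\ref{lem:QDirectionalMean}, which with $m = \tilde{O}(1/\epsilon)$ and $\nu = \tilde{\Theta}(\delta)$ multiplies out to $\tilde{O}(\sqrt{\E\|X\|}\log^3(1/\delta)/\epsilon)$ and $\tilde{O}(\log^3(1/\delta)/\epsilon)$.

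The main obstacle is item 4: small per-coordinate error does not by itself control $\langle u,\hat\mu - \E[X]\rangle$ for an adversarial $u$, and a net argument over the sphere would cost a spurious $\sqrt d$. Two points need care. First, one must isolate genuine cross-coordinate \emph{independence}, which is only available for the exact product state; this is why the proof must push $\|\ket{\perp_\beta}\|$ down to $\tilde{O}(\delta)$, i.e.\ strengthen the Cornelissen--Hamoudi bound via the sub-exponential concentration of $g^\top\Sigma g$. Second, one must ensure that neither conditioning on the global success event $\mathcal E$ nor the final projection reintroduces more than $\tilde{O}(\delta)$-scale bias --- the latter being clean precisely because one projects onto the \emph{ball} $B_2(\sqrt d)$ rather than clipping coordinatewise, so that the projection is active only under an $\tilde{\Omega}(d/\mathrm{polylog})$-wise failure. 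I expect verifying these two points (especially the Hanson--Wright-type estimate with the right parameter scaling) to be the most delicate part of the full proof.
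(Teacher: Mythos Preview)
Your overall architecture matches the paper's proof closely: reduce to the idealized product state by bounding $\|\ket{\perp_\beta}\|$, then use Lemma~\ref{lem:BUPE} coordinatewise to get per-coordinate guarantees and sum them via sub-Gaussian concentration for item~4; the query count in item~5 follows exactly as you describe. The paper's version of item~4 is packaged as a separate technical lemma (Lemma~\ref{lem:coordinate-estimate-sum}), but it is essentially your conditioning-plus-Hoeffding argument.

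There is, however, a genuine gap in the one step you flagged as ``most delicate.'' For $\beta=1$ you need $\P_{g}[\alpha\,\E_X|\langle g,X\rangle| > \E\|X\|] \le \tilde{O}(\delta^2)$, and you propose to use $(\E_X|\langle g,X\rangle|)^2 \le g^\top\Sigma g$ together with Hanson--Wright concentration of $g^\top\Sigma g$. This does not work: the Cauchy--Schwarz step loses exactly the factor that matters. Take $X=e_1$ with probability $p$ and $X=0$ otherwise; then $\min_{X\ne 0}\|X\|=1\ge\epsilon$, $\E\|X\|=p$, $\Sigma = p\,e_1e_1^\top$, so $g^\top\Sigma g = p g_1^2$ has mean $\Theta(p)$ while the threshold is $(\E\|X\|)^2/\alpha^2 = p^2/\alpha^2$. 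For $p \ll \alpha^2$ the threshold sits \emph{below} the mean and Hanson--Wright gives nothing (indeed $\P_g[g_1^2 > p/\alpha^2]$ is close to $1$). Your stated remedy --- that $\min_{X\ne 0}\|X\|\ge\epsilon$ keeps $\E\|X\|$ comparable to $\sqrt{\E\|X\|^2}$ --- is false in this example, where the ratio is $1/\sqrt{p}$.

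The paper handles this step differently and this is where $\min_{X\ne 0}\|X\|\ge\epsilon$ actually enters. It proves a separate lemma (Lemma~\ref{lem:expected-vector-inner-product} and Corollary~\ref{cor:expected-inner-prod-bound-u}) that upgrades the fixed-vector bound $\P_g[\alpha|\langle g,x\rangle|\ge\|x\|]\le 2e^{-2/\alpha^2}$ to random $X$ by a dyadic shell decomposition of $\|X\|$: slice the support into $O(\log(\sqrt{d}/\epsilon))$ shells $a_{k-1}\le\|X\|<a_k$ with $a_k/a_{k-1}=2$, apply the fixed-vector bound shellwise (where $\max\|X\|$ and $\min\|X\|$ are within a factor~$2$), and union-bound. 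The hypothesis $\min_{X\ne 0}\|X\|\ge\epsilon$ caps the number of shells at $O(\log(1/\epsilon))$, producing the $\log(\alpha\sqrt{d}/\epsilon)$ factor in Corollary~\ref{cor:expected-inner-prod-bound-u}. Your Hanson--Wright route is fine for $\beta=2$ (threshold $1$, $\tr\Sigma\le 1$), but for $\beta=1$ you need this shell argument or something equivalent.

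A minor point: the paper's algorithm does not project onto $B_2(\sqrt d)$; it argues $\|\hat\mu\|_\infty\le 1$ directly from the support of the phase-estimation output, so you need not worry about the projection-activation analysis.
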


\begin{algorithm2e}
	\caption{Bounded quantum mean estimation with boosted unbiased phase estimation (\texttt{Debiased-QBounded})}
	\label{algo:Debiased-QBounded}
	\LinesNumbered
    \DontPrintSemicolon
    \Input{Random variable $X$, target accuracy $0<\epsilon\leq 1$, failure probability $\delta\leq\frac{1}{2}$, choice of subroutines $\beta=1$ or 2}
    \Output{An estimate $\hat{\mu}$ of $\E[X]$}
    Set $\epsilon'\leftarrow \epsilon/8,n\leftarrow \sqrt{\E\|X\|}\log(d/\delta)/\epsilon'$\;
    Set $\alpha\leftarrow\frac{1}{\sqrt{\log(1000\pi n\sqrt{d})}}$, $m=2^{\big\lceil\log(\frac{12\pi}{\alpha\epsilon'})\big\rceil}$, and $\nu\leftarrow\frac{\delta}{9}$\;
    \For{$k=1,\ldots,\lceil 18\log(1/\delta)\rceil$}{
    $\ket*{\psi_{\mathrm{out}}^{(k)}}\leftarrow\mathtt{QDirectionalMean\beta}(X,m,\alpha,\nu)$\label{lin:QDirectionalMean}
    \label{lin:bounded-measurement}
    }
    Run $\mathtt{BoostedUnbiasedPhaseEstimation}$ on all the $d$ coordinates independently and denote $\hat{\phi}_1,\ldots,\hat{\phi}_d$ to be the outcomes\;
    \Return $\hat{\mu}\leftarrow \frac{2\pi}{\alpha}(\hat{\phi}_1,\ldots,\hat{\phi}_d)^\top$
\end{algorithm2e}

\begin{proof}
By Lemma~\ref{lem:QDirectionalMean}, the quantum state $\ket{\psi_{\mathrm{out}}}$ in Line~\ref{lin:QDirectionalMean} is defined on $G_m^d$. Hence, we have $\|\hat{\mu}\|_\infty\leq 1$ and $\|\hat{\mu}\|\leq \sqrt{d}$. Moreover, it satisfies
\begin{align}\label{eqn:psi_out-decomposition}
\ket{\psi_{\mathrm{out}}}=\ket{\psi_{\mathrm{prod}}}+\ket{\perp_\beta},
\end{align}
where
\begin{align*}
\ket{\psi_{\mathrm{prod}}}=\bigotimes_{j=1}^d\Big(\frac{1}{\sqrt{m}}\sum_{g_j\in G_m}e^{im\alpha g_j\E[X_j]}\ket{g_j}\Big)
\end{align*}
We first analyze the algorithm with \(\ket{\psi_{\mathrm{out}}}\) replaced by \(\ket{\psi_{\mathrm{prod}}}\) in Line~\ref{lin:bounded-measurement}. Note that \(\ket{\psi_{\mathrm{prod}}}\) is a product state, so the outcomes $\hat{\phi}_1,\ldots,\hat{\phi}_d$ of \texttt{BoostedUnbiasedPhaseEstimation} follows a product distribution. Hence, in this ideal case $\hat{\mu}$ also follows a product distribution, which we denote as
\begin{align*}
\hat{p}_1\otimes \hat{p}_2\otimes\cdots\otimes \hat{p}_d.
\end{align*}
Given that the phase in $\ket{\psi_{\mathrm{prod}}}$ satisfies $|\alpha\E[X]|_\infty \leq 2\pi/3$, by Lemma~\ref{lem:BUPE}, we have
\begin{align*}
\underset{\hat{\mu}\sim\hat{p}_1\otimes\cdots\otimes\hat{p}_d}{\E}[\hat{\mu}]-\E[X]=0,
\end{align*}
and
\begin{align*}
\P\Big[\Big|\hat{\phi}_j-\frac{\alpha}{2\pi}\E[X_j]\Big|\geq \frac{6}{m}\Big]\leq \delta,\quad \underset{\hat{\mu}_j\sim\hat{p}_j}{\P}\big[|\hat{\mu}_j-\E[X_j]|\geq \epsilon\big]\leq \delta,
\end{align*}
Thus, for any unit vector $u\in\R^d$, the random variable
\begin{align*}
\langle u,\hat{\mu}-\E[X]\rangle,\quad\hat{\mu}\sim\hat{p}_1\otimes\cdots\otimes\hat{p}_d
\end{align*}
satisfies
\begin{align*}
\underset{\hat{\mu}\sim\hat{p}_1\otimes\cdots\otimes\hat{p}_d}{\P}\big[|\langle u,\hat{\mu}-\E[X]\rangle|\geq \epsilon\log(1/\delta)\big]\leq O(\delta d)
\end{align*}
by Lemma~\ref{lem:coordinate-estimate-sum} where we set $k=d$.

Next, we discuss the error caused by the difference between $\ket{\psi_{\mathrm{prod}}}$ and the actual state $\ket{\psi_{\mathrm{out}}}$, i.e., the $\ket{\perp_{\beta}}$ term in Eq.~\ref{eqn:psi_out-decomposition}. By Lemma~\ref{lem:QDirectionalMean}, we have
\begin{align*}
\|\ket{\perp_1}\|
&\leq \nu+\sqrt{\underset{g\sim G_m^d}{\P}[\alpha\E|\langle g,X\rangle|> \E\|X\|]}+2\sqrt{m\alpha}d^{1/4}e^{-1/\alpha^2}\\
&\leq\frac{\delta}{3}+\sqrt{\underset{g\sim G_m^d}{\P}[\alpha\E|\langle g,X\rangle|> \E\|X\|]}
\end{align*}
and
\begin{align*}
\|\ket{\perp_2}\|
&\leq \nu+\sqrt{\underset{g\sim G_m^d}{\P}[\alpha\E|\langle g,X\rangle|> 1]}+2\sqrt{m\alpha}d^{1/4}e^{-1/\alpha^2}\\
&\leq\frac{\delta}{3}+\sqrt{\underset{g\sim G_m^d}{\P}[\alpha\E|\langle g,X\rangle|> 1]}
\end{align*}
given the choice of parameters of $\alpha,m,\gamma$ in Algorithm~\ref{algo:Debiased-QBounded}. By Corollary~\ref{cor:expected-inner-prod-bound-u}, we have
\begin{align*}
\underset{g\sim G_m^d}{\P}[\alpha\E|\langle g,X\rangle|> \E\|X\|]\leq 16\alpha\sqrt{d}e^{-1/(32\alpha^2)}\log\big(\alpha\sqrt{d}/\epsilon\big)\leq\frac{\delta^2}{36},
\end{align*}
and
\begin{align*}
\underset{g\sim G_m^d}{\P}[\alpha\E|\langle g,X\rangle|> 1]\leq 4\alpha\sqrt{d}e^{-1/(2\alpha^2)}\leq\frac{\delta^2}{36}.
\end{align*}
which leads to $\|\ket{\perp}_1\|,\|\ket{\perp}_2\|\leq \delta/2$. Hence, the actual probability distribution $\hat{p}$ of $\hat{\mu}$ satisfies
\begin{align*}\label{eqn:}
\big\|\hat{p}-\hat{p}_1\otimes \hat{p}_2\otimes\cdots\otimes \hat{p}_d\big\|_1\leq 2\lceil 18\log(1/\delta)\rceil\|\ket{\psi_{\mathrm{out}}}-\ket{\psi_{\mathrm{prod}}}\|=72\delta\log(1/\delta).
\end{align*}
Then, we can derive that
\begin{align*}
\Big\|\underset{\hat{\mu}\sim\hat{p}}{\E}[\hat{\mu}]-\E[X]\Big\|\leq\tilde{O}\big(\delta\sqrt{d}\big),
\end{align*}
given that $\|\hat{\mu}\|\leq\sqrt{d}$ and $\|X\|\leq 1$. Moreover, we have
\begin{align*}
\underset{\hat{\mu}\sim\hat{p}}{\P}\big[|\hat{\mu}_j-\E[X_j]|\geq \epsilon\big]\leq \delta+72\delta\log(1/\delta)=\tilde{O}(\delta),
\end{align*}
and
\begin{align*}
\underset{\hat{\mu}\sim\hat{p}}{\P}\big[|\langle u,\hat{\mu}-\E[X]\rangle|\geq \epsilon\big]\leq \tilde{O}(\delta d)+72\delta\log(1/\delta)=\tilde{O}(\delta d).
\end{align*}
By Lemma~\ref{lem:QDirectionalMean}, when $\beta=1$ the number of queries to $\mathcal{O}_X$ is
\begin{align*}
\lceil 18\log(1/\delta)\rceil\cdot\tilde{O}\big(m\sqrt{\E\|X\|}\log^2(1/\nu)\big)=\tilde{O}\big(\sqrt{\E\|X\|}\log^3(1/\delta)/\epsilon\big).
\end{align*}
For $\beta=2$, the number of queries is
\begin{align}
\lceil 18\log(1/\delta)\rceil\cdot\tilde{O}\big(m\log^2(1/\nu)\big)=\tilde{O}\big(\log^3(1/\delta)/\epsilon\big).
\end{align}
\end{proof}

\subsection{Unbounded random variables with bounded expectation}\label{sec:quantum-unbounded}
In this subsection, we introduce our quantum multivariate mean estimation algorithm for unbounded random variable with bounded expectation, a variant of \cite[Algorithm 2]{cornelissen2022near}, obtained by applying Algorithm~\ref{algo:Debiased-QBounded} to a series of truncated bounded random variables.

\begin{algorithm2e}
	\caption{Unbounded quantum mean estimation (\texttt{QUnbounded})}
	\label{algo:QUnbounded}
	\LinesNumbered
    \DontPrintSemicolon
    \Input{Random variable $X$, target accuracy $0<\epsilon\leq 1$, failure probability $\delta$%
    }
    \Output{An estimate $\hat{\mu}$ of $\E[X]$ with $\ell_\infty$ error at most $\epsilon$}
    Set $\sigma'\leftarrow\sigma/\log(\sigma/\epsilon)$, $\epsilon'\leftarrow\epsilon/(\sigma'\log\normalsize(1/\delta\normalsize))$, $K\leftarrow\big\lceil2\log\big(2\sqrt{2}/\epsilon'\big)\big\rceil$ %
    \;
    Take $\lceil64\log^2(1/\epsilon)\log(d/\delta)\rceil$ classical random samples $X_1,\ldots,X_{\lceil64\log^2(1/\epsilon)\log(d/\delta)\rceil}$, use $\eta$ to denote their coordinate median\;
    Define a new random variable $Y\leftarrow X-\eta$\;
    $a_{-1}\leftarrow 0$\;
    \For{$k=0,\ldots,K$}{
        $a_k\leftarrow%
        2^k\sigma'$\;
        Define the bounded random variable $Y_k\coloneqq \frac{Y}{a_k}\cdot\mathbb{I}\{a_{k-1}\leq\|Y\|< a_k\}$\;
        \lIf{$k=0$}{
            $\hat{\mu}_k'\leftarrow \texttt{Debiased-QBounded2}(Y_k,2^{-k-1}\epsilon'/K,\delta/(Kd))$
        }
        \lElse{
            $\hat{\mu}_k'\leftarrow \texttt{Debiased-QBounded1}(Y_k,2^{-k-1}\epsilon'/K,\delta/(Kd))$
        }
        \lIf{$\|\hat{\mu}_k'\|\leq 2^{-2k+2}$}{
            $\hat{\mu}_k\leftarrow \hat{\mu}_k'$
        }
        \lElse{
            $\hat{\mu}_k\leftarrow 0$
        }
    }
    \Return $\hat{\mu}\leftarrow\eta+\sum_{k=0}^Ka_k\hat{\mu}_k$%
    
\end{algorithm2e}

\begin{lemma}[Theorem 2 of \cite{lugosi2019mean}]\label{lem:classical-coordinate-wise-median}
For any $n$ independent samples $X_1,\ldots,X_n$ of a random variable $X\in\R^d$, any $\delta>0$, and any $n\geq\lceil32\log(d/\delta)\rceil$, their coordinate-wise median $\eta$ satisfies
\begin{align}
\P\bigg[\|\eta-\E[X]\|\leq\sigma\sqrt{\frac{32\log(d/\delta)}{n}}\bigg]\geq 1-\delta.
\end{align}
\end{lemma}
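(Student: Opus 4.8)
The plan is to recover this as the standard coordinate-wise median-of-means guarantee underlying Theorem 2 of \cite{lugosi2019mean}, so I would think of the $n$ samples as partitioned into $k \defeq \lceil 8\log(d/\delta)\rceil$ blocks of size at least $\lfloor n/k\rfloor \ge 1$ (using $n \ge \lceil 32\log(d/\delta)\rceil$), let $\bar Y^{(1)},\dots,\bar Y^{(k)}$ denote the block means, and take $\eta$ to be their coordinate-wise median. Writing $\sigma_j^2 \defeq \Var(X_j)$, so that $\sum_{j\in[d]}\sigma_j^2 = \E\norm{X - \E[X]}^2 \le \sigma^2$, the goal reduces to showing that for each coordinate $j$ the bound $\abs{\eta_j - \E[X_j]} \le t_j$ with $t_j \defeq \sigma_j\sqrt{32\log(d/\delta)/n}$ holds with probability at least $1 - \delta/d$, and then combining the coordinates via $\norm{\eta - \E[X]}^2 = \sum_{j\in[d]}(\eta_j - \E[X_j])^2$.

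For the per-coordinate bound I would run the familiar one-dimensional argument. Each block mean satisfies $\E[\bar Y^{(\ell)}_j] = \E[X_j]$ and $\Var(\bar Y^{(\ell)}_j) \le \sigma_j^2 k/n$, and the choice of $k$ gives $t_j^2 \ge 4\sigma_j^2 k/n$, so Chebyshev's inequality yields $\P[\,\abs{\bar Y^{(\ell)}_j - \E[X_j]} > t_j\,] \le \tfrac14$. Since $\abs{\eta_j - \E[X_j]} > t_j$ forces at least $k/2$ of the $k$ independent block means to deviate by more than $t_j$ in coordinate $j$, a Chernoff/binomial-tail bound then gives $\P[\abs{\eta_j - \E[X_j]} > t_j] \le e^{-c k}$ for an absolute constant $c>0$, which is at most $\delta/d$ once $k = \Theta(\log(d/\delta))$ with a large enough leading constant.

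I would then take a union bound over the $d$ coordinates: with probability at least $1-\delta$ we have $\abs{\eta_j - \E[X_j]} \le t_j$ for all $j$ simultaneously, and on that event
\begin{align*}
	\norm{\eta - \E[X]}^2 = \sum_{j\in[d]}(\eta_j - \E[X_j])^2 \le \sum_{j\in[d]} t_j^2 = \frac{32\log(d/\delta)}{n}\sum_{j\in[d]}\sigma_j^2 \le \frac{32\log(d/\delta)}{n}\,\sigma^2,
\end{align*}
which, after taking square roots, is exactly the claimed bound. Of course, one could alternatively just invoke Theorem 2 of \cite{lugosi2019mean} verbatim, since the statement is a specialization of it.

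The step I expect to be the main obstacle is the constant bookkeeping that ties together the block count, the Chebyshev threshold, and the binomial-tail exponent: driving $e^{-ck}$ below $\delta/d$ requires the exponent's constant to exceed $1$, which — with a divergence like $D(1/2\,\|\,1/4)\approx 0.14$ in the crude binomial bound — pins $k$ to within a constant factor of $8\log(d/\delta)$ from both sides, so reproducing the precise constant ``$32$'' may need a sharper per-block concentration inequality (e.g.\ Bernstein's) or a slightly different allocation of constants. I would therefore treat ``$32$'' as correct up to this slack in the hidden absolute constants (equivalently, cite \cite{lugosi2019mean} for the exact constant); everything else in the argument is routine.
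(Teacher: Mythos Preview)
The paper does not prove this lemma at all: it is stated as a direct citation of Theorem~2 of \cite{lugosi2019mean} and used as a black box in the proof of Proposition~\ref{prop:unbounded}. Your sketch is essentially the standard coordinate-wise median-of-means argument that underlies the cited theorem, so there is nothing to compare against in the paper itself; your final remark that one could ``just invoke Theorem~2 of \cite{lugosi2019mean} verbatim'' is exactly what the paper does.

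One small clarification worth flagging: as written, the lemma (and Algorithm~\ref{algo:QUnbounded}) speaks of the ``coordinate-wise median'' of the raw samples $X_1,\dots,X_n$, not of block means. You correctly read this as shorthand for the median-of-means estimator from \cite{lugosi2019mean}, since the plain sample median would concentrate around the coordinate-wise \emph{median} of $X$ rather than its mean and could not yield the stated bound. Your caveat about the constant ``$32$'' is also well taken: reproducing it exactly requires the specific allocation of constants in \cite{lugosi2019mean}, and your crude Chernoff step with $D(1/2\,\|\,1/4)$ would indeed force a somewhat larger leading constant.
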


\begin{proposition}\label{prop:unbounded}
For any $\epsilon,\delta>0$ and any random variable $X\in\R^d$ with variance $\Var[X]\leq\sigma^2$, Algorithm~\ref{algo:QUnbounded} outputs an estimate $\hat{\mu}$ satisfying $\E[\hat\mu]-\E[X]\leq \sigma/\log(1/\epsilon)$ and
\begin{align}\label{eqn:unbounded-subgaussian}
\P\big[\big|\big\langle u,\hat{\mu}-\E[X]\big\rangle\big|\geq\epsilon\big]\leq\tilde{O}(\delta)
\end{align}
for any unit vector $u\in\R^d$. Moreover, Algorithm~\ref{algo:QUnbounded} makes $\tilde{O}\big(\sigma\log^5(1/\delta)/\epsilon\big)$ queries to $\mathcal{O}_X$.

\end{proposition}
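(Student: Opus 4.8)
The plan is to analyze \texttt{QUnbounded} (Algorithm~\ref{algo:QUnbounded}) along the lines of the telescoping‑truncation scheme of \cite{cornelissen2022near}, but carrying the directional (isotropic) error through every level and invoking the bounded‑case guarantee Proposition~\ref{prop:bounded-product-distribution} at each one. Write $\mu \defeq \E[X]$. \textbf{Step 1 (center by the classical median).} The $\lceil 64\log^2(1/\epsilon)\log(d/\delta)\rceil$ classical samples feed Lemma~\ref{lem:classical-coordinate-wise-median}, which gives that with probability at least $1-\delta$ the coordinate median $\eta$ satisfies $\|\eta-\mu\|\le \sigma/(\sqrt2\log(1/\epsilon))$. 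Condition on this event and set $Y\defeq X-\eta$, so $\Var[Y]=\Var[X]\le\sigma^2$, $\E\|Y\|^2\le 2\sigma^2$, and $\|\E[Y]\|$ lies on the scale of $\sigma'=\sigma/\log(\sigma/\epsilon)$. \textbf{Step 2 (layered tail and truncation error).} Chebyshev gives $\P[\|Y\|\ge a_k]\lesssim \sigma^2/a_k^2 = 2^{-2k}\log^2(\sigma/\epsilon)$ once $a_k=2^k\sigma'$ exceeds $2\|\E[Y]\|$, hence $\E\|Y_k\|\le\P[Y_k\ne 0]\lesssim 2^{-2k}\,\mathrm{polylog}(\sigma/\epsilon)$ for every $k$. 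Two consequences: (i) the $\sqrt{\E\|Y_k\|}$ factor in \texttt{Debiased-QBounded1} is $\lesssim 2^{-k}\,\mathrm{polylog}$, which will cancel the $2^k$ growth of $1/\epsilon_k$; and (ii) the mass beyond level $K$ contributes $\|\E[Y\,\mathbb{I}\{\|Y\|\ge a_K\}]\|\le \E[\|Y\|\,\mathbb{I}\{\|Y\|\ge a_K\}]\lesssim \sigma' 2^{-K}\,\mathrm{polylog}\ll\epsilon$ by the choice $K\asymp\log(1/\epsilon')$.

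\textbf{Step 3 (per‑level estimates and assembly).} Each $Y_k$ is bounded with $\|Y_k\|\le 1$, and for $k\ge 1$ one has $\|Y_k\|\ge 1/2$ on its support, so Proposition~\ref{prop:bounded-product-distribution} applies with accuracy $\epsilon_k = 2^{-k-1}\epsilon'/K$ and failure probability $\delta_k=\delta/(Kd)$: item~4 yields $|\langle u,\hat\mu_k'-\E[Y_k]\rangle|\le \epsilon_k\log(1/\delta_k)$ for any fixed unit $u$ except with probability $\tilde O(\delta_k d)$; item~2 yields $\|\E[\hat\mu_k']-\E[Y_k]\|\le\tilde O(\delta_k\sqrt d)$; item~1 yields $\|\hat\mu_k'\|\le\sqrt d$. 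A union bound over the $K+1$ levels and the median event costs $\tilde O(\delta)$. Since $\hat\mu=\eta+\sum_k a_k\hat\mu_k$ and $\E[Y]=\mu-\eta$, writing $\bar Y\defeq Y\,\mathbb{I}\{\|Y\|<a_K\}=\sum_k a_k Y_k$ gives
\[
\hat\mu-\mu=\sum_{k=0}^K a_k\bigl(\hat\mu_k-\E[Y_k]\bigr)\;-\;\E[Y\,\mathbb{I}\{\|Y\|\ge a_K\}].
\]
Projecting onto $u$: the truncation term is $\ll\epsilon$ by Step~2, and each surviving term contributes at most $a_k\epsilon_k\log(1/\delta_k)=\sigma'\epsilon'\log(1/\delta_k)/(2K)$, so the $K+1$ of them sum to $O(\sigma'\epsilon'\log(1/\delta))=O(\epsilon)$ using $\sigma'\epsilon'=\epsilon/\log(1/\delta)$, giving the directional bound in Eq.~\ref{eqn:unbounded-subgaussian}. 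For the (weak) bias bound, the clipping step forces $\bigl\|\sum_k a_k\hat\mu_k\bigr\|\le\sum_k 2^k\sigma'\cdot 2^{-2k+2}=O(\sigma')$ \emph{deterministically}, whence $\|\E[\hat\mu]-\mu\|\le\|\eta-\mu\|+O(\sigma')=O(\sigma/\log(1/\epsilon))$.

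\textbf{Step 4 (query complexity).} Level $0$ uses \texttt{Debiased-QBounded2} at accuracy $\Theta(\epsilon'/K)$ and failure $\delta/(Kd)$, i.e.\ $\tilde O(\log^3(1/\delta)/\epsilon')$ queries. For $k\ge 1$, \texttt{Debiased-QBounded1} uses $\tilde O\bigl(\sqrt{\E\|Y_k\|}\,\log^3(1/\delta_k)/\epsilon_k\bigr)=\tilde O\bigl(2^{-k}\cdot 2^{k}\cdot\log^3(1/\delta)/\epsilon'\bigr)=\tilde O(\log^3(1/\delta)/\epsilon')$ by Step~2, the $k$‑dependence cancelling. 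Summing the $K+1$ levels and substituting $1/\epsilon'=\sigma'\log(1/\delta)/\epsilon\le\sigma\log(1/\delta)/\epsilon$ gives $\tilde O(\sigma\log^5(1/\delta)/\epsilon)$ queries to $\mathcal{O}_X$; the $\tilde O(\log^2(1/\epsilon)\log(d/\delta))$ classical samples used for $\eta$ are lower order.

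\textbf{Main obstacle.} The delicate point is the nonlinear per‑level clipping rule $\hat\mu_k\gets 0$ when $\|\hat\mu_k'\|>2^{-2k+2}$: one must show it is innocuous for the directional error, i.e.\ that on the good event it is only triggered at levels whose true contribution $a_k\E[Y_k]$ is already negligible (combining the layered tail of Step~2 with the two‑sided control of Proposition~\ref{prop:bounded-product-distribution}), so that replacing $\hat\mu_k'$ by $0$ there costs at most $O(\epsilon)$ in aggregate, while it is never triggered at the mass‑carrying small‑$k$ levels. Handling this cleanly — and threading it through the failure‑probability bookkeeping over all $K+1$ levels plus the median event — is the crux; the rest is the polylog accounting needed to land exactly the claimed error threshold $\epsilon$ and the $\tilde O(\sigma\log^5(1/\delta)/\epsilon)$ query bound, which relies on the $2^{-k}/2^{k}$ cancellation identified in Steps~2 and~4.
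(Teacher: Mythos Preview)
Your proposal follows essentially the same approach as the paper's own proof: center by the classical median (Lemma~\ref{lem:classical-coordinate-wise-median}), decompose $Y$ into dyadic shells $Y_k$, apply Proposition~\ref{prop:bounded-product-distribution} at each level with accuracy $2^{-k-1}\epsilon'/K$ and failure $\delta/(Kd)$, assemble via the telescoping identity $\hat\mu-\mu=\sum_k a_k(\hat\mu_k-\E[Y_k])-\E[Y\,\mathbb{I}\{\|Y\|\ge a_K\}]$ plus a union bound, and obtain the query count from the $\sqrt{\E\|Y_k\|}\lesssim 2^{-k}$ versus $1/\epsilon_k\propto 2^k$ cancellation. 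Your identification of the clipping rule as the crux is apt; the paper handles it by arguing that when the per-level directional bound holds (together with $\|\E[Y_k]\|\le\E\|Y_k\|\lesssim 2^{-2k}$) one has $\|\hat\mu_k'\|\le 2^{-2k+2}$ so the clip never fires and $\hat\mu_k=\hat\mu_k'$ throughout---the same mechanism you sketch for the mass-carrying levels---while your deterministic bound $\|\sum_k a_k\hat\mu_k\|\le\sum_k 2^k\sigma'\cdot 2^{-2k+2}=O(\sigma')$ for the bias is exactly the computation the paper uses.
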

\begin{proof}
Denote $\hat{\mu}_Y=\sum_{k=0}^Ka_k\hat{\mu}_k$. We first consider the case that $\|X-\eta\|\leq \sigma'$, which happens with probability at least $1-\delta$ by Lemma~\ref{lem:classical-coordinate-wise-median}. Under this condition, we have
\begin{align}\label{eqn:bounded-successful}
\P\Big[|\langle u,\hat{\mu}_k-\E[Y_k]\rangle|>\frac{2^{-k-1}\epsilon}{K}\,\Big|\,\|X-\eta\|\leq \sigma'\Big]\leq \tilde{O}(\delta/K),\qquad\forall k=0,\ldots,K,
\end{align}
by Proposition~\ref{prop:bounded-product-distribution}. Additionally, we have
\begin{align*}
\E[\|Y\|^2]\leq\|\eta-\E[X]\|^2+\E[\|X-\E[X]\|^2]\leq \sigma^2+\Var[X]=2\sigma^2,
\end{align*}
and
\begin{align*}
\P\{\|Y\|\geq a_K\}\leq\frac{\E[\|Y\|^2]}{a_K^2}\leq 2^{-2K+1}.
\end{align*}
Therefore,
\begin{align*}
\E\|Y_k\|\leq \P[a_{k-1}\leq\|Y\|\leq a_k]\leq \P[\|Y\|\geq a_{k-1}]\leq\frac{1}{2^{2k+1}},\qquad\forall k=0,\ldots,K.
\end{align*}
From this, we deduce that if
\begin{align}
|\langle u,\hat{\mu}_k-\E[Y_k]\rangle|>\frac{2^{-k-1}\epsilon}{K}
\end{align}
holds for all unit vectors $u \in \R^d$, we have $|\hat{\mu}_k| \leq 2^{-2k+2}$ and $\hat{\mu}_k' = \hat{\mu}_k$. Consequently,
\begin{align*}
\P\Big[\Big|\Big\langle u,\hat{\mu}_Y-\sum_{k=0}^Ka_k\E[Y_k]\Big\rangle\Big|\geq \frac{\epsilon}{2}\,\Big|\,\|X-\eta\|\leq \sigma'\Big]\leq \tilde{O}(\delta)
\end{align*}
by union bound. Furthermore, we have
\begin{align*}
\Big\|\E[Y]-\sum_{k=0}^Ka_k\E[Y_k]\Big\|&=\big\|\E[Y\cdot\mathbb{I}\{\|Y\|\geq a_K\}]\big\|\\
&\leq
\E[\|Y\|\cdot\mathbb{I}\{\|Y\|\geq a_K\}]\\
&\leq\sqrt{\E[\|Y\|]^2\cdot\P\{\|Y\|\geq a_K\}}\leq\frac{\epsilon}{2}
\end{align*}
which leads to
\begin{align*}
\P\big[\big|\big\langle u,\hat{\mu}_Y-\E[Y]\big\rangle\big|\geq\epsilon\,\big|\,\|X-\eta\|\leq \sigma'\big]\leq \tilde{O}(\delta).
\end{align*}
Note that $\|\eta\|\leq \|\E[X]\|+\sigma\leq L+\sigma$ in this case, we have $\hat{\mu}-\E[X]=\hat{\mu}_{Y}-\E[Y]$. Thus,
\begin{align*}
\P\big[\big|\big\langle u,\hat{\mu}-\E[X]\big\rangle\big|\geq\epsilon\,\big|\,\|X-\eta\|\leq \sigma'\big]\leq \tilde{O}(\delta).
\end{align*}
Counting in the error probability when $\|\eta-\E[X]\|\geq\sigma'$, we have
\begin{align}
\P\big[\big|\big\langle u,\hat{\mu}-\E[X]\big\rangle\big|\geq\epsilon\big]\leq \tilde{O}(\delta)+\delta=\tilde{O}(\delta).
\end{align}
As for the bias of $\hat{\mu}$, we have
\begin{align*}
    \big\|\E[\hat{\mu}]-\E[X]\big\|
    &\leq \sum_{k=0}^Ka_k\big\|\E[\hat{\mu}_k]\big\|+\big\|\E[\eta]-\E[X]\big\|=\sum_{k=0}^Ka_k\big\|\E[\hat{\mu}_k]\big\|\leq \frac{\sigma}{\log(\sigma/\epsilon)}
\end{align*}
Next, we discuss the query complexity of Algorithm~\ref{algo:QUnbounded}. 
The number of queries in the iteration $k=0$ is 
\begin{align*}
\tilde{O}\Big(\frac{K\log^3(5K/\delta)}{\epsilon'}\Big)=\tilde{O}\Big(\frac{\sigma\log^4(1/\delta)}{\epsilon}\Big),
\end{align*}
and the number of queries in the $k$-th iteration for $k>0$ is
\begin{align*}
\tilde{O}\Big(\frac{K\sqrt{\E\|Y_k\|}\log^3(1/\delta)}{2^{-k-1}\epsilon'}\Big)=\tilde{O}\Big(\frac{\sigma\log^4(1/\delta)}{\epsilon}\Big).
\end{align*}
Combining with the number of classical samples to obtain $\mu$, we can conclude that the total number of queries equals
\begin{align*}
\tilde{O}\Big(\frac{\sigma\log^4(1/\delta)}{\epsilon}\Big)+K\cdot\tilde{O}\Big(\frac{\sigma\log^4(1/\delta)}{\epsilon}\Big)=\tilde{O}\Big(\frac{\sigma\log^5(1/\delta)}{\epsilon}\Big).
\end{align*}
\end{proof}

\subsection{Removing the bias}\label{sec:quantum-MLMC}
In this subsection, we combine Algorithm~\ref{algo:QUnbounded} with the multi-level Monte Carlo (MLMC) technique to obtain an unbiased estimate of an unbounded random variable whose error is small in any direction with high probability.

\begin{algorithm2e}
	\caption{Quantum Isotropifier}
	\label{algo:unbiased-QUnbounded}
	\LinesNumbered
    \DontPrintSemicolon
    \Input{Random variable $X$, target accuracy $\epsilon$, failure probability $\delta$}
    \Output{An unbiased estimate $\hat{\mu}$ of $\E[X]$}
    Define $\beta_j\coloneqq2^{-j}j^2,\forall j\in\mathbb{N}$\;
    Set $\hat{\mu}^{(0)}\leftarrow$\texttt{QUnbounded}$(X,\epsilon/6,\delta)$\;
	Randomly sample $j\sim\mathrm{Geom}\left(\frac{1}{2}\right)\in\N$\;
        $\hat{\mu}^{(j)}\leftarrow$\texttt{QUnbounded}$(X,\beta_j\epsilon/6,\delta)$\;
        $\hat{\mu}^{(j-1)}\leftarrow$\texttt{QUnbounded}$(X,\beta_{j-1}\epsilon/6,\delta)$\;
        $\hat{\mu}\leftarrow\hat{\mu}^{(0)}+2^j(\hat{\mu}^{(j)}-\hat{\mu}^{(j-1)})$\label{lin:de-biasing}\;
        \Return $\hat{\mu}$\;
\end{algorithm2e}

\begin{theorem}\label{thm:unbiased-unbounded}
For any $\epsilon,\delta>0$ and any random variable $X\in\R^d$ with variance $\Var[X]\leq\sigma^2$, the output $\hat{\mu}$ of Algorithm~\ref{algo:unbiased-QUnbounded} satisfies $\E[\hat{\mu}]-\E[X]=0$ and \begin{align}\label{eqn:unbiased-unbounded-subgaussian}
\P\big[|\langle u,\hat{\mu}-\E[X]\rangle|\geq \epsilon\log^2(8/\delta)\big]\leq \tilde O({\delta}).
\end{align}
for any unit vector $u\in\R^d$. Moreover, Algorithm~\ref{algo:unbiased-QUnbounded} makes $\tilde{O}\big(\sigma\log^5(1/\delta)/\epsilon\big)$ queries to $\mathcal{O}_X$ in expectation.
\end{theorem}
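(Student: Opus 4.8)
The plan is to verify the three claimed properties of the MLMC estimator $\hat{\mu}$ defined in Algorithm~\ref{algo:unbiased-QUnbounded} by combining the guarantees of \texttt{QUnbounded} (Proposition~\ref{prop:unbounded}) with a standard MLMC telescoping argument, paying careful attention to preserving the \emph{directional} (isotropic) error control rather than merely an $\ell_2$-norm bound.

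\emph{Unbiasedness.} First I would establish $\E[\hat{\mu}] = \E[X]$. Let $\hat{\mu}^{(j)}$ denote an independent run of \texttt{QUnbounded}$(X, \beta_j \epsilon/6, \delta)$. By Proposition~\ref{prop:unbounded}, $\E[\hat{\mu}^{(j)}] - \E[X] = b_j$ where $\|b_j\| \le \beta_j \epsilon \cdot \sigma' / (6 \sigma)$ roughly, and crucially $b_j \to 0$ as $j \to \infty$ since $\beta_j = 2^{-j} j^2 \to 0$ (more precisely, the bias of \texttt{QUnbounded} at accuracy $\rho$ is $O(\sigma / \log(\sigma/\rho))$, so I should double-check that this tends to zero as $\rho = \beta_j \epsilon/6 \to 0$; it does, since $\log(\sigma/\rho) \to \infty$). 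With $J \sim \mathrm{Geom}(1/2)$ so that $\P[J = j] = 2^{-j}$ for $j \ge 1$, and writing $\hat{\mu} = \hat{\mu}^{(0)} + 2^J(\hat{\mu}^{(J)} - \hat{\mu}^{(J-1)})$ with all runs drawn independently given $J$, I compute
\[
\E[\hat{\mu}] = \E[\hat{\mu}^{(0)}] + \sum_{j=1}^{\infty} 2^{-j} \cdot 2^j \big(\E[\hat{\mu}^{(j)}] - \E[\hat{\mu}^{(j-1)}]\big) = \E[\hat{\mu}^{(0)}] + \lim_{j \to \infty}\big(\E[\hat{\mu}^{(j)}] - \E[\hat{\mu}^{(0)}]\big) = \E[X],
\]
where the telescoping sum converges absolutely because $\|\E[\hat{\mu}^{(j)}] - \E[\hat{\mu}^{(j-1)}]\|$ is summable (each term is $O(\sigma/\log(1/(\beta_j\epsilon)))$, and the weights are $2^{-j}\cdot 2^j = 1$, so I actually need the bias differences themselves to be summable — this requires that $\|b_j - b_{j-1}\|$ decays, which follows from $\|b_j\| = O(1/\log(1/\beta_j)) = O(1/j)$... this is the one place I should be careful, since $\sum 1/j$ diverges). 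The fix: the relevant sum is $\sum_j \P[J=j] 2^j \|b_j - b_{j-1}\| = \sum_j \|b_j - b_{j-1}\|$, so I need the total variation $\sum_j \|b_j - b_{j-1}\| < \infty$, equivalently the bias sequence is of bounded variation; since $b_j$ is monotonically (in norm) decreasing to $0$ along the controlled scaling, this telescopes to $\|b_0\| < \infty$. I would make this rigorous by noting $\|b_j\|$ is controlled by a decreasing function of $j$.

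\emph{Isotropic tail bound.} Next, for a fixed unit vector $u$, I condition on $J = j$ and write $\langle u, \hat{\mu} - \E[X]\rangle = \langle u, \hat{\mu}^{(0)} - \E[X]\rangle + 2^j\langle u, \hat{\mu}^{(j)} - \E[X]\rangle - 2^j\langle u, \hat{\mu}^{(j-1)} - \E[X]\rangle$. By Proposition~\ref{prop:unbounded}, each term $|\langle u, \hat{\mu}^{(\ell)} - \E[X]\rangle| \le \beta_\ell \epsilon/6$ (with $\beta_0$-style accuracy $\epsilon/6$ for the base case) except with probability $\tilde{O}(\delta)$. On the good event, $|\langle u, \hat{\mu} - \E[X]\rangle| \le \epsilon/6 + 2^j(\beta_j + \beta_{j-1})\epsilon/6 = \epsilon/6 + 2^j(2^{-j}j^2 + 2^{-j+1}(j-1)^2)\epsilon/6 = O(j^2 \epsilon)$. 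Since $J$ is geometric, $\P[J \ge j] = 2^{-j+1}$, so taking $j = \log_2(8/\delta)$ gives $\P[J \ge \log_2(8/\delta)] \le \delta/4$; on $\{J < \log_2(8/\delta)\}$ and the per-level good events, $|\langle u, \hat{\mu} - \E[X]\rangle| = O(\log^2(1/\delta) \cdot \epsilon) \le \epsilon \log^2(8/\delta)$ after absorbing constants. A union bound over the $O(\log(1/\delta))$ relevant levels of the $\tilde{O}(\delta)$ failure probabilities, plus the $\delta/4$ truncation event, yields total failure probability $\tilde{O}(\delta)$, establishing Eq.~\ref{eqn:unbiased-unbounded-subgaussian}.

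\emph{Query complexity.} Finally, the expected number of queries: level $j$ costs $\tilde{O}(\sigma \log^5(1/\delta)/(\beta_j \epsilon))$ by Proposition~\ref{prop:unbounded}, and is invoked with probability $\P[J=j] = 2^{-j}$ (the base run always, plus levels $j$ and $j-1$ when $J=j$). So the expected cost is $\tilde{O}(\sigma \log^5(1/\delta)/\epsilon) \cdot \big(1 + \sum_{j \ge 1} 2^{-j}(\beta_j^{-1} + \beta_{j-1}^{-1})\big)$. With $\beta_j = 2^{-j} j^2$, we have $2^{-j}\beta_j^{-1} = j^{-2}$, so $\sum_j 2^{-j}\beta_j^{-1} = \sum_j j^{-2} = O(1)$, and similarly $\sum_j 2^{-j}\beta_{j-1}^{-1} = \sum_j 2^{-j} \cdot 2^{j-1}(j-1)^{-2} = \frac12\sum_j (j-1)^{-2} = O(1)$. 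Hence the multiplicative blowup is a constant and the total expected query count is $\tilde{O}(\sigma \log^5(1/\delta)/\epsilon)$, as claimed.

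The main obstacle I anticipate is the bounded-variation / summability issue for unbiasedness: I must confirm that the residual biases $b_j$ of \texttt{QUnbounded} are not merely vanishing but decay fast enough (or monotonically enough) that $\sum_j \|b_j - b_{j-1}\|$ converges, so that the telescoped expectation is genuinely $\E[X]$ and the geometric-weighted sum is well-defined. This hinges on the precise form of the bias bound $\|\E[\hat{\mu}^{(j)}] - \E[X]\| \le \sigma/\log(\sigma/(\beta_j\epsilon))$ from Proposition~\ref{prop:unbounded} and extracting monotonic decay from it. A secondary (routine) obstacle is bookkeeping the $\tilde{O}$ factors and constants in the union bound so that the final failure probability is genuinely $\tilde{O}(\delta)$ and the accuracy is genuinely $\epsilon \log^2(8/\delta)$, matching the statement exactly.
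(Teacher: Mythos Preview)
Your proposal is correct and follows essentially the same MLMC-telescoping strategy as the paper's proof. A few minor remarks on how the paper streamlines things relative to your write-up:

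For the tail bound, rather than conditioning on $J=j$, restricting to $\{J < \log_2(8/\delta)\}$, and union-bounding over $O(\log(1/\delta))$ levels, the paper decomposes the bad event into just three pieces, $\{|\langle u,\xi^{(0)}\rangle|\ge \epsilon\log^2(8/\delta)/6\}$, $\{|\langle u,\xi^{(J)}\rangle|\ge 2^{-J}\epsilon\log^2(8/\delta)/6\}$, and the analogous event for $\xi^{(J-1)}$, and then handles the latter two by the inclusion $\{|\langle u,\xi^{(J)}\rangle|\ge 2^{-J}\epsilon\log^2(8/\delta)/6\}\subseteq\{|\langle u,\xi^{(J)}\rangle|\ge \beta_J\epsilon/6\}\cup\{2^J\beta_J\ge\log^2(8/\delta)\}$. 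Since $2^J\beta_J=J^2$, the second event is exactly $\{J\ge\log(8/\delta)\}$, which has probability $\le\delta$; and the first is $\tilde O(\delta)$ by averaging the per-level guarantee of Proposition~\ref{prop:unbounded} over $J$. This avoids any explicit union over levels.

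For unbiasedness, the paper simply writes the telescoping identity $\E[\hat\mu]=\E[\hat\mu^{(0)}]+\sum_{j\ge1}(\E[\hat\mu^{(j)}]-\E[\hat\mu^{(j-1)}])=\lim_{j\to\infty}\E[\hat\mu^{(j)}]=\E[X]$. Your caution about absolute summability of the bias differences is well-placed and is glossed over in the paper; your observation that $\|b_j\|=O(\sigma/\log(1/\beta_j))=O(\sigma/j)$ decreases monotonically to zero is enough to justify the telescope (the partial sums are $\E[\hat\mu^{(N)}]-\E[\hat\mu^{(0)}]$ and converge because $b_N\to 0$), so this is not a genuine obstacle.

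Your query-complexity computation matches the paper's exactly.
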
 

\begin{proof}
The structure of our proof is similar to the proof of Theorem 4 of~\cite{sidford2024quantum}. Note that the output $\hat{\mu}$ of Algorithm~\ref{algo:unbiased-QUnbounded} can be written as
\begin{align}\label{eqn:MLMC-estimator}
\hat{\mu}=\hat{\mu}^{(0)}+2^J(\hat{\mu}^{(J)}-\hat{\mu}^{(J-1)}),\qquad J\sim\mathrm{Geom}\Big(\frac{1}{2}\Big)\in\N.
\end{align}
Thus,
\begin{align*}
\mathbb{E}[\hat{\mu}]=\E[\hat{\mu}^{(0)}]+\sum_{j=1}^{\infty}\P\{J=j\}2^j(\E[\hat{\mu}^{(j)}]-\E[\hat{\mu}^{(j)}])
=\E[\hat{\mu}_{\infty}]=\E[X].
\end{align*}
For each $j\in\mathbb{N}$ and $\hat{\mu}^{(j)}$, we denote $\xi^{(j)}=\hat{\mu}^{(j)}-\E[X]$. Then,
\begin{align*}
&\P\big[|\langle u,\hat{\mu}-\E[X]\rangle|\geq \epsilon\log^2(8/\delta)\big]\\
&\qquad=\P\big[|\langle u,\xi^{(0)}+2^J(\xi^{(J)}-\xi^{(J-1)})\rangle|\geq \epsilon\log^2(8/\delta)\big]\\
&\qquad\geq\P\big[|\langle u,\xi^{(0)}\rangle|\geq \epsilon\log^2(8/\delta)/6\big]+\P\big[|\langle u,\xi^{(J)}\rangle|\geq 2^{-J}\epsilon\log^2(8/\delta)/6\big]\\
&\qquad\qquad+\P\big[|\langle u,\xi^{(J-1)}\rangle|\geq 2^{-J}\epsilon\log^2(8/\delta)/6\big],
\end{align*}
where by Proposition~\ref{prop:unbounded} we have
\begin{align*}
\P\big[|\langle u,\xi^{(0)}\rangle|\geq \epsilon\log^2(8/\delta)/6\big]\leq\P\big[|\langle u,\xi^{(0)}\rangle|\geq \epsilon/6\big]\leq \tilde O(\delta),
\end{align*}
and
\begin{align*}
\P\big[|\langle u,\xi^{(J)}\rangle|\geq 2^{-J}\epsilon\log^2(8/\delta)/6\big]
&\leq \P\big[|\langle u,\xi^{(J)}\rangle|\geq \beta_J\epsilon/6\big]+\P\big[2^J\beta_J\geq \log^2(8/\delta)\big]=\tilde O(\delta).
\end{align*}
Similarly, we have
\begin{align*}
\P\big[|\langle u,\xi^{(J-1)}\rangle|\geq 2^{-J}\epsilon\log^2(8/\delta)/6\big]
&\leq \P\big[|\langle u,\xi^{(J)}\rangle|\geq \beta_{J-1}\epsilon/6\big]+\P\big[2^J\beta_{J-1}\geq \log^2(8/\delta)\big]=\tilde O(\delta),
\end{align*}
which gives $\P\big[|\langle u,\hat{\mu}-\E[X]\rangle|\geq \epsilon\log^2(8/\delta)\big]\leq\tilde O(\delta)$. Moreover, the number of queries of Algorithm~\ref{algo:unbiased-QUnbounded} equals
\begin{align*}
\tilde{O}\Big(\frac{\sigma\log^5(1/\delta)}{\epsilon}\Big)\cdot\Big(1+\sum_{j=1}^{\infty}\P\{J=j\}(\beta_j^{-1}+\beta_{j-1}^{-1})\Big)
&=\tilde{O}\Big(\frac{\sigma\log^5(1/\delta)}{\epsilon}\Big)\sum_{j=1}^{\infty}\frac{1}{j^2}\\
&=\tilde{O}\Big(\frac{\sigma\log^5(1/\delta)}{\epsilon}\Big)
\end{align*}
by Proposition~\ref{prop:unbounded}.
\end{proof}

\subsection{An improved bound for quantum SCO}\label{sec:quantum-cutting-plane}

In this subsection, we apply Algorithm~\ref{algo:unbiased-QUnbounded} to obtain an ISGO using queries to a QVSGO, and then solve SCO using the stochastic cutting plane method developed in Section~\ref{sec:nice-noise}. 

\restateQVSGOtoISGO*

\begin{proof}
For any $x\in\R^d$, it suffices to apply \texttt{QuantumIsotropifier} (Algorithm~\ref{algo:unbiased-QUnbounded}) to the QVSGO at $x$. In particular, by Theorem~\ref{thm:unbiased-unbounded}, there exists some $\hat\delta=\tilde O(\delta)$ such that the output of \\
\texttt{QuantumIsotropifier}$(\sigma_Id^{-1/2}/\log^2(8/\hat\delta),\hat{\delta})$ is an $(\sigma_I,\delta)$-ISGO, and the number of queries equals $\tilde{O}(\sigma_V\sqrt{d}\log^7(1/\hat\delta)/\sigma_I)=\tilde{O}(\sigma_V\sqrt{d}\log^7(1/\delta)/\sigma_I)$ in expectation.
\end{proof}

\restateQSCOthm*

\begin{proof}
The proof is established by combining Theorem~\ref{thm:ISGO-upper-bound} and Theorem~\ref{thm:QVSGO-to-ISGO-informal}, where we set $\sigma_I = \epsilon\sqrt{d}$.
\end{proof} 

\section{Conclusion}
\label{sec:conclusion}

We define a new gradient noise model for SCO, termed \emph{isotropic noise,} for which we achieve tight upper and lower bounds (up to polylogarithmic factors). Our upper bound improves upon the state-of-the-art (and in particular SGD) in certain regimes, and as a corollary we achieve a new state-of-the-art complexity for sub-exponential noise. We then develop a subroutine which may be of independent interest called a \emph{quantum isotropifier}, which converts a variance-bounded quantum sampling oracle into an unbiased estimator with isotropic noise. By combining our results, we obtain improved dimension-dependent rates for quantum SCO. 

One limitation of our work is we only resolve Problem~\ref{openprob:conjectured-rate} under stronger assumptions on the noise (e.g., sub-exponential noise in Corollary~\ref{cor:sub-exp-upper-bound}), whereas the rate we achieve under a VSGO (Corollary~\ref{cor:VSGO-upper}) is worse by roughly a $d$-factor. Another limitation is that our improved quantum rates are dimension-dependent, and have unclear long-term practical impact. Regarding the former, we note that a series of works have shown that dimensions-dependence is necessary for quantum algorithms to achieve an improved scaling in $1/\epsilon$ in a variety of settings \cite{garg2020no, garg2021near, zhang2022quantum}. 

We hope that by identifying the natural open problem Problem~\ref{openprob:conjectured-rate} and developing techniques to resolve it under stronger assumptions, we leave the door open to future work on resolving fundamental trade-offs between the variance, Lipschitz constant, and dimension in SCO. We believe charting these trade-offs is important since it would yield a sharp understanding of the precisions at which algorithms such as SGD are superseded by other methods. Such questions have long been understood in the deterministic setting, and we believe our work is an important step in resolving them under stochasticity and in quantum settings.

\section*{Acknowledgments}
Thank you to anonymous reviewers for their feedback.
Aaron Sidford was funded in part
by a Microsoft Research Faculty Fellowship, NSF CAREER Award CCF-1844855, NSF Grant CCF1955039,
and a PayPal research award. Chenyi Zhang was supported in part by the Shoucheng Zhang graduate fellowship.

\bibliographystyle{abbrvnat}

\begin{thebibliography}{67}
\providecommand{\natexlab}[1]{#1}
\providecommand{\url}[1]{\texttt{#1}}
\expandafter\ifx\csname urlstyle\endcsname\relax
  \providecommand{\doi}[1]{doi: #1}\else
  \providecommand{\doi}{doi: \begingroup \urlstyle{rm}\Url}\fi

\bibitem[Abrams and Williams(1999)]{abrams1999fast}
D.~S. Abrams and C.~P. Williams.
\newblock Fast quantum algorithms for numerical integrals and stochastic
  processes.
\newblock \emph{arXiv preprint quant-ph/9908083}, 1999.

\bibitem[Agarwal et~al.(2009)Agarwal, Wainwright, Bartlett, and
  Ravikumar]{agarwal2009information}
A.~Agarwal, M.~J. Wainwright, P.~Bartlett, and P.~Ravikumar.
\newblock Information-theoretic lower bounds on the oracle complexity of convex
  optimization.
\newblock \emph{Advances in Neural Information Processing Systems}, 22, 2009.

\bibitem[An et~al.(2021)An, Linden, Liu, Montanaro, Shao, and
  Wang]{an2021quantum}
D.~An, N.~Linden, J.-P. Liu, A.~Montanaro, C.~Shao, and J.~Wang.
\newblock Quantum-accelerated multilevel {Monte Carlo} methods for stochastic
  differential equations in mathematical finance.
\newblock \emph{Quantum}, 5:\penalty0 481, 2021.
\newblock URL \url{https://arxiv.org/abs/2012.06283}.

\bibitem[Asi et~al.(2021)Asi, Carmon, Jambulapati, Jin, and
  Sidford]{asi2021stochastic}
H.~Asi, Y.~Carmon, A.~Jambulapati, Y.~Jin, and A.~Sidford.
\newblock Stochastic bias-reduced gradient methods.
\newblock \emph{Advances in Neural Information Processing Systems},
  34:\penalty0 10810--10822, 2021.
\newblock URL \url{https://arxiv.org/abs/2106.09481}.

\bibitem[Asi et~al.(2024)Asi, Liu, and
  Tian]{asi2024privatestochasticconvexoptimization}
H.~Asi, D.~Liu, and K.~Tian.
\newblock Private stochastic convex optimization with heavy tails:
  Near-optimality from simple reductions, 2024.
\newblock URL \url{https://arxiv.org/abs/2406.02789}.

\bibitem[Augustino et~al.(2023)Augustino, Nannicini, Terlaky, and
  Zuluaga]{augustino2023quantum}
B.~Augustino, G.~Nannicini, T.~Terlaky, and L.~F. Zuluaga.
\newblock Quantum interior point methods for semidefinite optimization.
\newblock \emph{Quantum}, 7:\penalty0 1110, 2023.

\bibitem[Augustino et~al.(2025)Augustino, Herman, Fontana, Kim, Watkins,
  Chakrabarti, and Pistoia]{augustino2025fast}
B.~Augustino, D.~Herman, E.~Fontana, J.~L. Kim, J.~Watkins, S.~Chakrabarti, and
  M.~Pistoia.
\newblock Fast convex optimization with quantum gradient methods, 2025.

\bibitem[Ben-Tal and Nemirovski(2023)]{bental2023OptimizationIII}
A.~Ben-Tal and A.~Nemirovski.
\newblock Lecture notes: Optimization iii (convex analysis, nonlinear
  programming theory, nonlinear programming algorithms).
\newblock Lecture Notes, ISYE 6663, Georgia Institute of Technology \& Technion
  - Israel Institute of Technology, 2023.

\bibitem[Benveniste et~al.(2012)Benveniste, M{\'e}tivier, and
  Priouret]{benveniste2012adaptive}
A.~Benveniste, M.~M{\'e}tivier, and P.~Priouret.
\newblock \emph{Adaptive algorithms and stochastic approximations}, volume~22.
\newblock Springer Science \& Business Media, 2012.

\bibitem[Blanchet and Glynn(2015)]{blanchet2015unbiased}
J.~H. Blanchet and P.~W. Glynn.
\newblock Unbiased {Monte Carlo} for optimization and functions of expectations
  via multi-level randomization.
\newblock In \emph{2015 {Winter Simulation Conference (WSC)}}, pages
  3656--3667. IEEE, 2015.

\bibitem[Bottou et~al.(2018)Bottou, Curtis, and
  Nocedal]{bottou2018optimization}
L.~Bottou, F.~E. Curtis, and J.~Nocedal.
\newblock Optimization methods for large-scale machine learning.
\newblock \emph{SIAM review}, 60\penalty0 (2):\penalty0 223--311, 2018.

\bibitem[{Brand\~a}o et~al.(2019){Brand\~a}o, Kalev, Li, Lin, Svore, and
  Wu]{brandao2017SDP}
F.~G. {Brand\~a}o, A.~Kalev, T.~Li, C.~Y.-Y. Lin, K.~M. Svore, and X.~Wu.
\newblock Quantum {SDP} solvers: {L}arge speed-ups, optimality, and
  applications to quantum learning.
\newblock In \emph{Proceedings of the 46th International Colloquium on
  Automata, Languages, and Programming}, volume 132 of \emph{Leibniz
  International Proceedings in Informatics (LIPIcs)}, pages 27:1--27:14.
  Schloss Dagstuhl--Leibniz-Zentrum fuer Informatik, 2019.
\newblock URL \url{https://arxiv.org/abs/1710.02581}.

\bibitem[Brand{\~a}o and Svore(2017)]{brandao2017quantum}
F.~G. Brand{\~a}o and K.~M. Svore.
\newblock Quantum speed-ups for solving semidefinite programs.
\newblock In \emph{2017 IEEE 58th Annual Symposium on Foundations of Computer
  Science (FOCS)}, pages 415--426. IEEE, 2017.

\bibitem[Brassard et~al.(2002)Brassard, Hoyer, Mosca, and
  Tapp]{brassard2002quantum}
G.~Brassard, P.~Hoyer, M.~Mosca, and A.~Tapp.
\newblock Quantum amplitude amplification and estimation.
\newblock \emph{Contemporary Mathematics}, 305:\penalty0 53--74, 2002.

\bibitem[Brassard et~al.(2011)Brassard, Dupuis, Gambs, and
  Tapp]{brassard2011optimal}
G.~Brassard, F.~Dupuis, S.~Gambs, and A.~Tapp.
\newblock An optimal quantum algorithm to approximate the mean and its
  application for approximating the median of a set of points over an arbitrary
  distance.
\newblock 2011.

\bibitem[Bubeck et~al.(2019)Bubeck, Jiang, Lee, Li, and
  Sidford]{bubeck2019complexity}
S.~Bubeck, Q.~Jiang, Y.-T. Lee, Y.~Li, and A.~Sidford.
\newblock Complexity of highly parallel non-smooth convex optimization.
\newblock \emph{Advances in neural information processing systems}, 32, 2019.
\newblock URL \url{https://arxiv.org/abs/1906.10655}.

\bibitem[Bubeck et~al.(2015)]{bubeck2015convex}
S.~Bubeck et~al.
\newblock Convex optimization: Algorithms and complexity.
\newblock \emph{Foundations and Trends{\textregistered} in Machine Learning},
  8\penalty0 (3-4):\penalty0 231--357, 2015.

\bibitem[Carmon and Hinder(2024)]{carmon2024adaptivity}
Y.~Carmon and O.~Hinder.
\newblock The price of adaptivity in stochastic convex optimization.
\newblock In S.~Agrawal and A.~Roth, editors, \emph{Proceedings of Thirty
  Seventh Conference on Learning Theory}, volume 247 of \emph{Proceedings of
  Machine Learning Research}, pages 772--774. PMLR, 30 Jun--03 Jul 2024.
\newblock URL \url{https://proceedings.mlr.press/v247/carmon24a.html}.

\bibitem[Chakrabarti et~al.(2020)Chakrabarti, Childs, Li, and
  de~Wolf]{chakrabarti2020optimization}
S.~Chakrabarti, A.~M. Childs, T.~Li, and R.~de~Wolf.
\newblock Quantum algorithms and lower bounds for convex optimization.
\newblock \emph{Quantum}, 4:\penalty0 221, 2020.
\newblock URL \url{https://arxiv.org/abs/1809.01731}.

\bibitem[Chakrabarti et~al.(2023)Chakrabarti, Childs, Hung, Li, Wang, and
  Wu]{chakrabarti2023quantum}
S.~Chakrabarti, A.~M. Childs, S.-H. Hung, T.~Li, C.~Wang, and X.~Wu.
\newblock Quantum algorithm for estimating volumes of convex bodies.
\newblock \emph{ACM Transactions on Quantum Computing}, 4\penalty0
  (3):\penalty0 1--60, 2023.
\newblock URL \url{https://arxiv.org/abs/1908.03903}.

\bibitem[Childs et~al.(2022)Childs, Leng, Li, Liu, and
  Zhang]{childs2022quantum}
A.~M. Childs, J.~Leng, T.~Li, J.-P. Liu, and C.~Zhang.
\newblock Quantum simulation of real-space dynamics.
\newblock \emph{Quantum}, 6:\penalty0 680, 2022.
\newblock URL \url{https://arxiv.org/abs/2203.17006}.

\bibitem[Cornelissen et~al.(2022)Cornelissen, Hamoudi, and
  Jerbi]{cornelissen2022near}
A.~Cornelissen, Y.~Hamoudi, and S.~Jerbi.
\newblock Near-optimal quantum algorithms for multivariate mean estimation.
\newblock In \emph{Proceedings of the 54th Annual ACM SIGACT Symposium on
  Theory of Computing}, pages 33--43, 2022.
\newblock URL \url{https://arxiv.org/abs/2111.09787}.

\bibitem[Duchi(2018)]{duchi2018introductory}
J.~C. Duchi.
\newblock Introductory lectures on stochastic optimization.
\newblock \emph{The mathematics of data}, 25:\penalty0 99--186, 2018.

\bibitem[Garg et~al.(2020)Garg, Kothari, Netrapalli, and Sherif]{garg2020no}
A.~Garg, R.~Kothari, P.~Netrapalli, and S.~Sherif.
\newblock No quantum speedup over gradient descent for non-smooth convex
  optimization, 2020.
\newblock URL \url{https://arxiv.org/abs/2010.01801}.

\bibitem[{Garg} et~al.(2021){Garg}, Kothari, Netrapalli, and
  Sherif]{garg2021near}
A.~{Garg}, R.~Kothari, P.~Netrapalli, and S.~Sherif.
\newblock Near-optimal lower bounds for convex optimization for all orders of
  smoothness.
\newblock \emph{Advances in Neural Information Processing Systems},
  34:\penalty0 29874--29884, 2021.
\newblock URL \url{https://arxiv.org/abs/2112.01118}.

\bibitem[Giles(2015)]{giles2015multilevel}
M.~B. Giles.
\newblock Multilevel {Monte Carlo} methods.
\newblock \emph{Acta Numerica}, 24:\penalty0 259--328, 2015.

\bibitem[Gily{\'e}n and Li(2020)]{gilyen2020distributional}
A.~Gily{\'e}n and T.~Li.
\newblock Distributional property testing in a quantum world.
\newblock In \emph{11th Innovations in Theoretical Computer Science Conference
  (ITCS 2020)}. Schloss Dagstuhl-Leibniz-Zentrum f{\"u}r Informatik, 2020.
\newblock URL \url{https://arxiv.org/abs/1902.00814}.

\bibitem[Gong et~al.(2022)Gong, Zhang, and Li]{gong2022robustness}
W.~Gong, C.~Zhang, and T.~Li.
\newblock Robustness of quantum algorithms for nonconvex optimization, 2022.
\newblock URL \url{https://arxiv.org/abs/2212.02548}.

\bibitem[Gr{\"u}nbaum(1960)]{grunbaum1960partitions}
B.~Gr{\"u}nbaum.
\newblock Partitions of mass-distributions and of convex bodies by hyperplanes.
\newblock \emph{Pacific Journal of Mathematics}, 10:\penalty0 1257--1261, 1960.
\newblock URL \url{https://api.semanticscholar.org/CorpusID:123031280}.

\bibitem[Hamoudi(2021)]{hamoudi2021quantum}
Y.~Hamoudi.
\newblock Quantum sub-gaussian mean estimator.
\newblock In \emph{29th Annual European Symposium on Algorithms (ESA 2021)}.
  Schloss Dagstuhl-Leibniz-Zentrum f{\"u}r Informatik, 2021.

\bibitem[Hamoudi and Magniez(2019)]{hamoudi2019quantum}
Y.~Hamoudi and F.~Magniez.
\newblock Quantum {Chebyshev's} inequality and applications.
\newblock In \emph{46th International Colloquium on Automata, Languages, and
  Programming (ICALP 2019)}, 2019.
\newblock URL \url{https://arxiv.org/abs/1807.06456}.

\bibitem[Heinrich(2002)]{heinrich2002quantum}
S.~Heinrich.
\newblock Quantum summation with an application to integration.
\newblock \emph{Journal of Complexity}, 18\penalty0 (1):\penalty0 1--50, 2002.

\bibitem[Jambulapati et~al.(2024)Jambulapati, Sidford, and
  Tian]{jambulapati2024compquerydepthparallel}
A.~Jambulapati, A.~Sidford, and K.~Tian.
\newblock Closing the computational-query depth gap in parallel stochastic
  convex optimization.
\newblock In S.~Agrawal and A.~Roth, editors, \emph{Proceedings of Thirty
  Seventh Conference on Learning Theory}, volume 247 of \emph{Proceedings of
  Machine Learning Research}, pages 2608--2643. PMLR, 30 Jun--03 Jul 2024.
\newblock URL \url{https://proceedings.mlr.press/v247/jambulapati24b.html}.

\bibitem[Jiang et~al.(2020)Jiang, Lee, Song, and Wong]{jiang2020improved}
H.~Jiang, Y.~T. Lee, Z.~Song, and S.~C.-w. Wong.
\newblock An improved cutting plane method for convex optimization,
  convex-concave games, and its applications.
\newblock In \emph{Proceedings of the 52nd Annual ACM SIGACT Symposium on
  Theory of Computing}, pages 944--953, Chicago IL USA, June 2020. ACM.
\newblock ISBN 9781450369794.
\newblock \doi{10.1145/3357713.3384284}.
\newblock URL \url{https://dl.acm.org/doi/10.1145/3357713.3384284}.

\bibitem[Jin et~al.(2019)Jin, Netrapalli, Ge, Kakade, and Jordan]{jin2019short}
C.~Jin, P.~Netrapalli, R.~Ge, S.~M. Kakade, and M.~I. Jordan.
\newblock A short note on concentration inequalities for random vectors with
  subgaussian norm, 2019.
\newblock URL \url{https://arxiv.org/abs/1902.03736}.

\bibitem[Jordan(2005)]{jordan2005fast}
S.~P. Jordan.
\newblock Fast quantum algorithm for numerical gradient estimation.
\newblock \emph{Physical review letters}, 95\penalty0 (5):\penalty0 050501,
  2005.

\bibitem[Kerenidis and Prakash(2018)]{kerenidis2018interior}
I.~Kerenidis and A.~Prakash.
\newblock A quantum interior point method for {LPs and SDPs}, 2018.
\newblock URL \url{https://arxiv.org/abs/1808.09266}.

\bibitem[Kothari and O'Donnell(2023)]{kothari2023mean}
R.~Kothari and R.~O'Donnell.
\newblock Mean estimation when you have the source code; or, quantum {Monte
  Carlo} methods.
\newblock In \emph{Proceedings of the 2023 Annual ACM-SIAM Symposium on
  Discrete Algorithms (SODA)}, pages 1186--1215. SIAM, 2023.
\newblock URL \url{https://arxiv.org/abs/2208.07544}.

\bibitem[Lan(2012)]{lan2012optimal}
G.~Lan.
\newblock An optimal method for stochastic composite optimization.
\newblock \emph{Mathematical Programming}, 133\penalty0 (1-2):\penalty0
  365--397, 2012.

\bibitem[Lee et~al.(2015)Lee, Sidford, and Wong]{lee2015faster}
Y.~T. Lee, A.~Sidford, and S.~C.-W. Wong.
\newblock A faster cutting plane method and its implications for combinatorial
  and convex optimization.
\newblock In \emph{2015 IEEE 56th Annual Symposium on Foundations of Computer
  Science}, pages 1049--1065, Berkeley, CA, USA, Oct. 2015. IEEE.
\newblock ISBN 9781467381918.
\newblock \doi{10.1109/FOCS.2015.68}.
\newblock URL \url{http://ieeexplore.ieee.org/document/7354442/}.

\bibitem[Leng et~al.(2023)Leng, Hickman, Li, and Wu]{leng2023quantum}
J.~Leng, E.~Hickman, J.~Li, and X.~Wu.
\newblock Quantum {Hamiltonian} descent.
\newblock \emph{Bulletin of the American Physical Society}, 68, 2023.

\bibitem[Li(2022)]{li2022enabling}
X.~Li.
\newblock Enabling quantum speedup of {Markov} chains using a multi-level
  approach, 2022.

\bibitem[Liu et~al.(2023)Liu, Su, and Li]{liu2023quantum}
Y.~Liu, W.~J. Su, and T.~Li.
\newblock On quantum speedups for nonconvex optimization via quantum tunneling
  walks.
\newblock \emph{Quantum}, 7:\penalty0 1030, 2023.
\newblock URL \url{https://arxiv.org/abs/2209.14501}.

\bibitem[Lugosi and Mendelson(2019)]{lugosi2019mean}
G.~Lugosi and S.~Mendelson.
\newblock Mean estimation and regression under heavy-tailed distributions: A
  survey.
\newblock \emph{Foundations of Computational Mathematics}, 19\penalty0
  (5):\penalty0 1145--1190, 2019.

\bibitem[Ma et~al.(2024)Ma, Verchand, and
  Samworth]{ma2024highprobabilityminimaxlowerbounds}
T.~Ma, K.~A. Verchand, and R.~J. Samworth.
\newblock High-probability minimax lower bounds, 2024.
\newblock URL \url{https://arxiv.org/abs/2406.13447}.

\bibitem[Montanaro(2015)]{montanaro2015quantum}
A.~Montanaro.
\newblock Quantum speedup of {M}onte {C}arlo methods.
\newblock \emph{Proceedings of the Royal Society A}, 471\penalty0
  (2181):\penalty0 20150301, 2015.
\newblock URL \url{https://arxiv.org/abs/1504.06987}.

\bibitem[Nemirovski(1994)]{nemirovski1994efficient}
A.~Nemirovski.
\newblock Efficient methods in convex programming, 1994.
\newblock Lecture notes.

\bibitem[Nemirovski et~al.(2009)Nemirovski, Juditsky, Lan, and
  Shapiro]{nemirovski2009robust}
A.~Nemirovski, A.~Juditsky, G.~Lan, and A.~Shapiro.
\newblock Robust stochastic approximation approach to stochastic programming.
\newblock \emph{SIAM Journal on optimization}, 19\penalty0 (4):\penalty0
  1574--1609, 2009.

\bibitem[Nesterov(2018)]{nesterov2018convexopttextbook}
J.~E. Nesterov.
\newblock \emph{Lectures on convex optimization}.
\newblock Springer optimization and its applications. Springer Nature, Cham,
  second edition edition, 2018.
\newblock ISBN 9783319915777.

\bibitem[Powell(2019)]{powell2019unified}
W.~B. Powell.
\newblock A unified framework for stochastic optimization.
\newblock \emph{European Journal of Operational Research}, 275\penalty0
  (3):\penalty0 795--821, 2019.

\bibitem[Robbins and Monro(1951)]{robbins1951stochastic}
H.~Robbins and S.~Monro.
\newblock A stochastic approximation method.
\newblock \emph{The Annals of Mathematical Statistics}, 22\penalty0
  (3):\penalty0 400--407, 1951.

\bibitem[Sidford(2024)]{sidford2024optimization}
A.~Sidford.
\newblock Optimization algorithms, 2024.
\newblock URL \url{https://www.aaronsidford.com/teaching}.
\newblock Lecture notes; compiled on March 18, 2024.

\bibitem[Sidford and Zhang(2024)]{sidford2024quantum}
A.~Sidford and C.~Zhang.
\newblock Quantum speedups for stochastic optimization.
\newblock \emph{Advances in Neural Information Processing Systems}, 36, 2024.

\bibitem[Terhal(1999)]{terhal1999quantum}
B.~Terhal.
\newblock \emph{Quantum algorithms and quantum entanglement}.
\newblock PhD thesis, University of Amsterdam, 1999.

\bibitem[Thomas and Joy(2006)]{thomas2006elements}
M.~Thomas and A.~T. Joy.
\newblock \emph{Elements of information theory}.
\newblock Wiley-Interscience, 2006.

\bibitem[Vaidya(1989)]{vaidya1989new}
P.~Vaidya.
\newblock A new algorithm for minimizing convex functions over convex sets.
\newblock In \emph{30th Annual Symposium on Foundations of Computer Science},
  pages 338--343, 1989.
\newblock \doi{10.1109/SFCS.1989.63500}.

\bibitem[van Apeldoorn and Gily{\'e}n(2019)]{van2019improvements}
J.~van Apeldoorn and A.~Gily{\'e}n.
\newblock Improvements in quantum {SDP}-solving with applications.
\newblock In \emph{46th International Colloquium on Automata, Languages, and
  Programming (ICALP 2019)}. Schloss Dagstuhl-Leibniz-Zentrum fuer Informatik,
  2019.
\newblock URL \url{https://arxiv.org/abs/1804.05058}.

\bibitem[{van Apeldoorn} et~al.(2020){van Apeldoorn}, Gily{\'e}n, Gribling, and
  de~Wolf]{van2020quantum}
J.~{van Apeldoorn}, A.~Gily{\'e}n, S.~Gribling, and R.~de~Wolf.
\newblock Quantum {SDP}-solvers: Better upper and lower bounds.
\newblock \emph{Quantum}, 4:\penalty0 230, 2020.

\bibitem[van Apeldoorn et~al.(2020)van Apeldoorn, Gily{\'e}n, Gribling, and
  de~Wolf]{vanApeldoorn2020optimization}
J.~van Apeldoorn, A.~Gily{\'e}n, S.~Gribling, and R.~de~Wolf.
\newblock Convex optimization using quantum oracles.
\newblock \emph{Quantum}, 4:\penalty0 220, 2020.
\newblock URL \url{https://arxiv.org/abs/1809.00643}.

\bibitem[van Apeldoorn et~al.(2023)van Apeldoorn, Cornelissen, Gily{\'e}n, and
  Nannicini]{van2023quantum}
J.~van Apeldoorn, A.~Cornelissen, A.~Gily{\'e}n, and G.~Nannicini.
\newblock Quantum tomography using state-preparation unitaries.
\newblock In \emph{Proceedings of the 2023 Annual ACM-SIAM Symposium on
  Discrete Algorithms (SODA)}, pages 1265--1318. SIAM, 2023.

\bibitem[Vershynin(2018)]{vershynin2018high}
R.~Vershynin.
\newblock \emph{High-dimensional probability}, volume~47 of \emph{Cambridge
  Series in Statistical and Probabilistic Mathematics}.
\newblock Cambridge University Press, Cambridge, 2018.
\newblock ISBN 978-1-108-41519-4.
\newblock \doi{10.1017/9781108231596}.
\newblock URL \url{https://doi.org/10.1017/9781108231596}.
\newblock An introduction with applications in data science, With a foreword by
  Sara van de Geer.

\bibitem[Vural et~al.(2022)Vural, Yu, Balasubramanian, Volgushev, and
  Erdogdu]{vural2022mirror}
N.~M. Vural, L.~Yu, K.~Balasubramanian, S.~Volgushev, and M.~A. Erdogdu.
\newblock Mirror descent strikes again: Optimal stochastic convex optimization
  under infinite noise variance.
\newblock In \emph{Conference on Learning Theory}, pages 65--102. PMLR, 2022.

\bibitem[Wainwright(2019)]{wainwright2019statstextbook}
M.~J. Wainwright.
\newblock \emph{High-Dimensional Statistics: A Non-Asymptotic Viewpoint}.
\newblock Cambridge Series in Statistical and Probabilistic Mathematics.
  Cambridge University Press, New York, NY, 1st ed edition, 2019.
\newblock ISBN 9781108627771.

\bibitem[Wocjan et~al.(2009)Wocjan, Chiang, Nagaj, and
  Abeyesinghe]{wocjan2009quantum}
P.~Wocjan, C.-F. Chiang, D.~Nagaj, and A.~Abeyesinghe.
\newblock Quantum algorithm for approximating partition functions.
\newblock \emph{Physical Review A—Atomic, Molecular, and Optical Physics},
  80\penalty0 (2):\penalty0 022340, 2009.

\bibitem[{Zhang} and Li(2022)]{zhang2022quantum}
C.~{Zhang} and T.~Li.
\newblock Quantum lower bounds for finding stationary points of nonconvex
  functions, 2022.
\newblock URL \url{https://arxiv.org/abs/2212.03906}.

\bibitem[Zhang et~al.(2021)Zhang, Leng, and Li]{zhang2021quantum}
C.~Zhang, J.~Leng, and T.~Li.
\newblock Quantum algorithms for escaping from saddle points.
\newblock \emph{Quantum}, 5:\penalty0 529, 2021.
\newblock URL \url{https://arxiv.org/abs/2007.10253}.

\bibitem[Zhang et~al.(2024)Zhang, Zhang, Fang, Wang, and
  Li]{zhang2024quantumalgorithmslowerbounds}
Y.~Zhang, C.~Zhang, C.~Fang, L.~Wang, and T.~Li.
\newblock Quantum algorithms and lower bounds for finite-sum optimization,
  2024.
\newblock URL \url{https://arxiv.org/abs/2406.03006}.

\end{thebibliography}

\vspace{\baselineskip}
\noindent

\newpage
\appendix

\section{Sub-exponential distributions and additional discussion of SGO oracles} \label{sec:subexpdiscussion}

In this section, we review sub-exponential distributions and also further discuss the relationships between the various SGOs we define. 

\paragraph{Review of sub-exponential distributions.} As there are several equivalent ways to define a sub-exponential random variable \cite[Proposition 2.7.1]{vershynin2018high}, we will use the following ``tail-inequality'' version which suits our purposes:

\begin{definition}
    A random variable $X \in \R$ is \emph{$\sigma$-sub-exponential} if 
    \begin{align*}
        \P \insquare*{|X - \E X| \ge t} \le 2 \exp \inparen*{-t  / \sigma} ~~\text{for all $t \ge 0$}.
    \end{align*}
\end{definition}

Analogously to the definition of a sub-Gaussian random vector (see, e.g., Definition~3.4.1 in \cite{vershynin2018high} or Definition~2 in \cite{jin2019short}), we say a random vector $X \in \R^d$ is sub-exponential if all of the one-dimensional marginals are sub-exponential random variables:

\begin{definition}
	\label{def:sub-exp-random-vec}
    A random vector $X \in \R^d$ is \emph{$\sigma$-sub-exponential} if for any unit vector $u \in \R^d$, we have that $\inangle{X, u}$ is $\sigma$-sub-exponential, namely:
    \begin{align*}
        \P \insquare*{\abs{\inangle{X - \E X, u}} \ge t} \le 2 \exp \inparen*{-t / \sigma} ~~\text{for all $t \ge 0$}.
    \end{align*}
\end{definition}

It is well known that sub-exponential distributions generalize sub-Gaussian distributions and therefore bounded random variables in particular. Finally, we prove a short lemma which we will reference below:

\begin{lemma}
\label{lem:norm-variance-sub-exp}
If $X \in \R^d$ is $\sigma$-sub-exponential, then $\E \norm{X - \E X}^2 \le C d \sigma^2$ for some absolute constant $C$.
\end{lemma}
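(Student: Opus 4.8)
The plan is to reduce the claim to a one–dimensional second–moment bound via the standard basis decomposition of the squared Euclidean norm. Writing $Y \defeq X - \E X$, I would first observe that $\norm{Y}^2 = \sum_{i \in [d]} \inangle{Y, e_i}^2$, so by linearity of expectation
\begin{align*}
	\E \norm{X - \E X}^2 = \sum_{i \in [d]} \E \inangle{X - \E X, e_i}^2.
\end{align*}
It therefore suffices to show that each summand is at most $C \sigma^2$ for an absolute constant $C$, since the claimed bound then follows by summing over the $d$ coordinates.

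Next I would fix a coordinate $i \in [d]$ and apply Definition~\ref{def:sub-exp-random-vec} with the unit vector $u = e_i$, which gives that $\inangle{X - \E X, e_i}$ is a mean-zero real random variable satisfying $\P[\abs{\inangle{X - \E X, e_i}} \ge t] \le 2 \exp(-t / \sigma)$ for all $t \ge 0$. To turn this tail bound into a moment bound, I would use the layer-cake identity: for any real random variable $Z$,
\begin{align*}
	\E Z^2 = \int_0^\infty \P[Z^2 \ge u] \, \d u = \int_0^\infty 2 t \, \P[\abs{Z} \ge t] \, \d t,
\end{align*}
after the substitution $u = t^2$. Plugging in the sub-exponential tail bound yields $\E \inangle{X - \E X, e_i}^2 \le \int_0^\infty 4 t \exp(-t/\sigma) \, \d t = 4 \sigma^2 \int_0^\infty s e^{-s} \, \d s = 4 \sigma^2$, using the change of variables $s = t/\sigma$ and $\int_0^\infty s e^{-s}\,\d s = 1$. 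Combining this with the displayed sum gives $\E \norm{X - \E X}^2 \le 4 d \sigma^2$, so the claim holds with $C = 4$ (any constant suffices, and the exact value is immaterial for how the lemma is used).

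There is no substantive obstacle here: the only computation of note is the elementary tail integral $\int_0^\infty t e^{-t/\sigma}\,\d t = \sigma^2$, and the rest is a routine combination of linearity of expectation and the coordinatewise sub-exponential assumption. If one wanted to avoid even the explicit integral, one could instead invoke any standard equivalence between sub-exponential tail bounds and moment bounds (e.g.\ \cite[Proposition~2.7.1]{vershynin2018high}) to conclude directly that each marginal has second moment $O(\sigma^2)$; I would include the short direct computation above for self-containedness.
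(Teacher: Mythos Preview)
Your proof is correct and follows essentially the same approach as the paper: decompose $\norm{X-\E X}^2$ along the standard basis and bound each one-dimensional second moment by $O(\sigma^2)$. The only difference is that the paper cites \cite[Proposition~2.7.1]{vershynin2018high} for the marginal bound, whereas you compute it directly via the layer-cake integral (and even mention the Vershynin citation as an alternative); both are fine.
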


\begin{proof}
Letting $e_i$ denote the $i$-th standard basis vector, we have
\begin{align*}
\E \norm{X - \E X}^2 = \E \sum_{i \in [d]} \inangle{X - \E X, e_i}^2 = \sum_{i \in [d]} \E \inangle{X - \E X, e_i}^2 \le C d \sigma^2,
\end{align*}
where the last equality follows because if a random variable $Z \in \R$ is $\sigma$-sub-exponential, then $\E (Z - \E Z)^2 \le C \sigma^2$ \cite[Proposition 2.7.1]{vershynin2018high}.
\end{proof}

\paragraph{Additional discussion of SGO oracles.} Note that a $\sigmaE$-ESGO as defined in Definition~\ref{def:ESGO} is $\sigmaE / \sqrt{d}$-sub-exponential per Definition~\ref{def:ESGO} as opposed to $\sigmaE$-sub-exponential. We perform this scaling so that, per Lemma~\ref{lem:norm-variance-sub-exp}, a $\sigmaE$-ESGO is also a $C \sigmaE$-VSGO for some absolute constant $C$. In other words, this scaling makes it so that Definition~\ref{def:ESGO} is truly a restriction of Definition~\ref{def:VSGO}, and thus the rates of Corollaries \ref{cor:sub-exp-upper-bound} and \ref{cor:VSGO-upper} are comparable. 

\section{Technical lemmas}
\label{sec:technical-lemmas}

In this section, we collect some miscellaneous technical lemmas.
The following lemma from~\cite{cornelissen2022near} shows that for any fixed vector $x\in\R^d$, most of the vectors $g\sim G^d_m$ have a relatively small inner product with $x$.
\begin{lemma}[Lemma 3.1 of \cite{cornelissen2022near}]\label{lem:fixed-vector-inner-product}
Let $\alpha>0$. For any vector $x\in\R^d$ we have
\begin{align*}
\underset{g\sim G_m^d}{\P}[\alpha|\langle g,x\rangle|\geq \|x\|]\leq 2e^{-2/\alpha^2},\quad\forall x\in\R^d.
\end{align*}
\end{lemma}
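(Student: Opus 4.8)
\textbf{Proof plan for Lemma~\ref{lem:fixed-vector-inner-product}.} The plan is to fix the vector $x\in\R^d$, write $\inangle{g,x}=\sum_{j=1}^d g_j x_j$, and observe that under $g\sim G_m^{\otimes d}$ each coordinate $g_j$ is an independent random variable drawn uniformly from the grid $G_m\subset(-\tfrac12,\tfrac12)$; in particular each $g_j$ is symmetric about $0$ and bounded by $\tfrac12$ in absolute value. First I would record the two elementary facts we need about a single grid coordinate: $\E[g_j]=0$ by symmetry, and $g_j$ is supported in $[-\tfrac12,\tfrac12]$, so $g_j$ is a mean-zero random variable bounded by $\tfrac12$, hence sub-Gaussian with variance proxy at most $\tfrac14$ (a bounded centered random variable supported in an interval of length $L$ has variance proxy $L^2/4$, here $L=1$). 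Consequently $\inangle{g,x}=\sum_j x_j g_j$ is a sum of independent mean-zero sub-Gaussian random variables, and by the standard additivity of variance proxies it is sub-Gaussian with variance proxy at most $\tfrac14\sum_j x_j^2=\tfrac14\norm{x}^2$.

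Next I would apply the standard sub-Gaussian tail bound: for a mean-zero $\tau$-sub-Gaussian random variable $Z$ one has $\P[|Z|\ge s]\le 2\exp(-s^2/(2\tau^2))$. With $\tau^2=\tfrac14\norm{x}^2$ and $s=\norm{x}/\alpha$, this gives
\[
\underset{g\sim G_m^d}{\P}\big[\,|\inangle{g,x}|\ge \norm{x}/\alpha\,\big]\le 2\exp\!\left(-\frac{\norm{x}^2/\alpha^2}{2\cdot \tfrac14\norm{x}^2}\right)=2\exp\!\left(-\frac{2}{\alpha^2}\right),
\]
which is exactly the claimed inequality after rewriting the event $|\inangle{g,x}|\ge\norm{x}/\alpha$ as $\alpha|\inangle{g,x}|\ge\norm{x}$. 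The case $x=0$ is trivial since then the left-hand side is $\P[0\ge 0]=1$ only if we read it as a vacuous inequality; more cleanly, if $\norm{x}=0$ then $\inangle{g,x}=0$ almost surely so the probability is $0$ (or one simply notes the bound $2e^{-2/\alpha^2}$ still holds trivially since we may assume $x\neq 0$ without loss of generality).

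The main subtlety — not really an obstacle, but the one point requiring slight care — is justifying the sub-Gaussian variance proxy for a single grid coordinate $g_j$. One must verify that a uniform distribution on the symmetric grid $G_m=\{\tfrac{k}{m}-\tfrac12+\tfrac{1}{2m}:k\in\{0,\dots,m-1\}\}$ is mean-zero (immediate from the symmetry $k\mapsto m-1-k$) and that Hoeffding's lemma applies to give $\E[e^{\lambda g_j}]\le e^{\lambda^2/8}$ for all $\lambda$, since $g_j\in[-\tfrac12,\tfrac12]$ almost surely. Then independence across $j$ multiplies the moment generating functions, giving $\E[e^{\lambda\inangle{g,x}}]\le \exp(\lambda^2\norm{x}^2/8)$, and a Chernoff bound with the optimal choice $\lambda=4s/\norm{x}^2$ yields the one-sided tail $e^{-2s^2/\norm{x}^2}$; a union bound over the two signs produces the factor of $2$. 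Since this lemma is quoted verbatim from \cite[Lemma 3.1]{cornelissen2022near}, in the write-up I would either cite it directly or include this short Hoeffding-plus-Chernoff argument for completeness.
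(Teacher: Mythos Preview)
Your argument is correct: each coordinate $g_j$ of $g\sim G_m^d$ is mean-zero (by the symmetry $k\mapsto m-1-k$ of the grid) and supported in $[-\tfrac12,\tfrac12]$, so Hoeffding's lemma gives $\E[e^{\lambda g_j}]\le e^{\lambda^2/8}$, independence yields $\E[e^{\lambda\inangle{g,x}}]\le e^{\lambda^2\norm{x}^2/8}$, and the Chernoff bound at $s=\norm{x}/\alpha$ produces exactly $2e^{-2/\alpha^2}$. Note that the paper does not supply its own proof of this lemma---it is quoted directly from \cite[Lemma~3.1]{cornelissen2022near}---so there is nothing to compare against beyond observing that your Hoeffding-plus-Chernoff argument is the standard one and is what the cited reference does.

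One small slip in your discussion of the degenerate case: when $x=0$ the event $\alpha|\inangle{g,x}|\ge\norm{x}$ reads $0\ge 0$, which has probability $1$, not $0$ as you wrote. So strictly speaking the stated bound fails for $x=0$ whenever $2e^{-2/\alpha^2}<1$; the lemma should be read for $x\neq 0$ (which is how it is used downstream), and your parenthetical ``assume $x\neq 0$ without loss of generality'' is the right resolution.
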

We extend this result and show that this exponentially small probability bound still holds when $x$ is a random variable instead of a fixed vector.

\begin{lemma}\label{lem:expected-vector-inner-product}
Let $\alpha>0$. Consider a $d$-dimensional random variable $Y\in\R^d$ that satisfies
\begin{align}\label{eqn:grid-prob-1}
\underset{Y}{\P}[\alpha|\langle Y,x\rangle|\geq \|x\|]\leq p(\alpha),\quad\forall x\in\R^d,
\end{align}
for some $p(\cdot)$ that is a function of $\alpha$, then for any random variable $X\in\R^d$, we have
\begin{align}\label{eqn:bounded-inner-prob}
\underset{Y}{\P}[\alpha\underset{X}{\E}\big[|\langle Y,X\rangle|\big]\geq 2\max\|X\|]\leq \alpha p(\alpha)\max\|Y\|
\end{align}
and 
\begin{align}\label{eqn:unbounded-inner-prob}
\underset{Y}{\P}\big[\alpha\underset{X}{\E}|\langle Y,X\rangle|> \E\|X\|\big]
\leq 8\alpha p(8\alpha)\max\|Y\|\log\left(\frac{\alpha\max\|Y\|\sqrt{\E[\|X\|^2]}}{\min_{X\neq 0}\|X\|}\right).
\end{align}
\end{lemma}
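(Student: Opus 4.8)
\textbf{Proof proposal for Lemma~\ref{lem:expected-vector-inner-product}.}

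The plan is to bootstrap the fixed-vector bound in~\eqref{eqn:grid-prob-1} to a statement about expectations over $X$, first for bounded $X$ and then for unbounded $X$ via a truncation argument. For the bounded case~\eqref{eqn:bounded-inner-prob}, the key observation is that $\underset{X}{\E}|\langle Y, X\rangle| \le \max\|X\| \cdot \underset{X}{\E}\big[|\langle Y, X/\|X\|\rangle|\big]$ whenever $X\neq 0$ (handling $X=0$ trivially since its contribution is zero), so it suffices to control $\underset{X}{\E}\big[|\langle Y, \hat X\rangle|\big]$ where $\hat X = X/\|X\|$ is a unit vector. Since $Y$ ranges over a bounded set with $\|Y\|\le\max\|Y\|$, for a \emph{fixed} unit vector $x$ the random variable (in $Y$) $|\langle Y, x\rangle|$ is at most $\max\|Y\|$ and, by~\eqref{eqn:grid-prob-1}, is at least $1/\alpha$ only with probability $p(\alpha)$; hence $\underset{Y}{\E}[|\langle Y,x\rangle|] \le (1/\alpha) + p(\alpha)\max\|Y\|$. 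The plan is then to exchange expectations: $\underset{Y}{\E}\,\underset{X}{\E}\big[|\langle Y,\hat X\rangle|\big] = \underset{X}{\E}\,\underset{Y}{\E}\big[|\langle Y,\hat X\rangle|\big] \le (1/\alpha) + p(\alpha)\max\|Y\|$, and apply Markov's inequality: $\underset{Y}{\P}\big[\alpha\,\underset{X}{\E}|\langle Y,X\rangle| \ge 2\max\|X\|\big] \le \underset{Y}{\P}\big[\alpha\,\underset{X}{\E}|\langle Y,\hat X\rangle| \ge 2\big]$, and since the expectation (over $Y$) of the quantity $\alpha\,\underset{X}{\E}|\langle Y,\hat X\rangle|$ is at most $1 + \alpha p(\alpha)\max\|Y\|$, Markov gives a bound of roughly $\alpha p(\alpha)\max\|Y\|$ after absorbing the additive constant into the ``$2$'' threshold (one should check the arithmetic carefully; subtracting off the deterministic-looking $1/\alpha$ part is the clean way to do this).

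For the unbounded case~\eqref{eqn:unbounded-inner-prob}, the plan is to split $X$ into a bounded part and a tail part using a threshold $R_0$ to be chosen. Write $X = X\cdot\indic{\|X\|\le R_0} + X\cdot\indic{\|X\|> R_0}$. For the bounded part, apply~\eqref{eqn:bounded-inner-prob} (with a rescaled $\alpha$, say $8\alpha$, to get the $\max\|X\cdot\indic{\|X\|\le R_0}\| \le R_0$ factor to line up with $\E\|X\|$) to bound $\underset{Y}{\P}\big[\alpha\,\underset{X}{\E}|\langle Y, X\indic{\|X\|\le R_0}\rangle| > \tfrac{1}{4}\E\|X\|\big]$ by roughly $\alpha p(8\alpha)\max\|Y\|$ times a ratio like $R_0/\E\|X\|$. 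For the tail part, use Cauchy–Schwarz: $\underset{X}{\E}|\langle Y, X\indic{\|X\|> R_0}\rangle| \le \max\|Y\|\cdot\underset{X}{\E}[\|X\|\indic{\|X\|> R_0}] \le \max\|Y\|\cdot\sqrt{\E[\|X\|^2]}\cdot\sqrt{\P[\|X\|>R_0]}$, and then bound $\P[\|X\|>R_0] \le \E[\|X\|^2]/R_0^2$ by Markov, so this deterministic (in $Y$) contribution is at most $\max\|Y\|\cdot\E[\|X\|^2]/R_0$. Choosing $R_0$ on the order of $\E[\|X\|^2]/(\alpha\,\E\|X\|\cdot\max\|Y\|)$ — or more precisely iterating/dyadically decomposing the tail at scales $2^k R_0$ — makes the tail part negligible compared to $\E\|X\|/\alpha$; the dyadic decomposition over $O(\log(\cdots))$ scales, with a union bound over the scales, is what produces the logarithmic factor $\log\big(\alpha\max\|Y\|\sqrt{\E\|X\|^2}/\min_{X\neq 0}\|X\|\big)$ in~\eqref{eqn:unbounded-inner-prob}. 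The lower cutoff $\min_{X\neq0}\|X\|$ enters because the dyadic levels only need to run from roughly $\min_{X\neq 0}\|X\|$ up to the point where the tail mass vanishes.

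The main obstacle I expect is bookkeeping the constants and the number of dyadic levels in~\eqref{eqn:unbounded-inner-prob}: one has to carefully choose the per-level accuracy and threshold so that (i) summing the bounded-part failure probabilities from~\eqref{eqn:bounded-inner-prob} over all levels stays within a constant multiple of $\alpha p(8\alpha)\max\|Y\|$ times the claimed logarithm, (ii) the geometric series of tail contributions telescopes to something dominated by $\E\|X\|/\alpha$, and (iii) the rescaling of $\alpha$ (the factor $8$) is consistent between the two places~\eqref{eqn:bounded-inner-prob} is invoked. None of these steps is conceptually deep, but getting the logarithm's argument to match exactly, and ensuring the decomposition terminates at the right lower scale $\min_{X\neq0}\|X\|$, requires care. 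The other minor subtlety is the Fubini/Tonelli exchange of $\underset{X}{\E}$ and $\underset{Y}{\E}$, which is justified since the integrand $|\langle Y, X\rangle|$ is nonnegative.
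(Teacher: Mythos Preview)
Your high-level plan matches the paper's: bootstrap the fixed-vector bound~\eqref{eqn:grid-prob-1} via an averaging/Markov argument for~\eqref{eqn:bounded-inner-prob}, then handle~\eqref{eqn:unbounded-inner-prob} by dyadic truncation. But there is a genuine gap in your Markov step for~\eqref{eqn:bounded-inner-prob}.

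You correctly bound $\E_Y\big[\alpha\,\E_X|\langle Y,\hat X\rangle|\big] \le 1 + \alpha p(\alpha)\max\|Y\|$, but Markov's inequality applied to $Z := \alpha\,\E_X|\langle Y,\hat X\rangle|$ at threshold $2$ only yields $\P_Y[Z\ge 2] \le (1 + \alpha p(\alpha)\max\|Y\|)/2$, which is useless (at least $1/2$). Your proposed fix, ``subtracting off the deterministic-looking $1/\alpha$ part,'' does not work if you subtract \emph{after} taking $\E_X$: the quantity $Z-1$ can be negative, so Markov does not apply to it, and there is no deterministic lower bound of $1$ on $Z$ to peel off. What is actually needed is to move the subtraction \emph{inside} the expectation over $X$: write
\[
|\langle Y,\hat X\rangle| \;=\; \min\{|\langle Y,\hat X\rangle|,\,1/\alpha\} \;+\; \big(|\langle Y,\hat X\rangle|-1/\alpha\big)_+,
\]
so that $Z \le 1 + W$ holds \emph{deterministically} in $Y$, where $W := \alpha\,\E_X\big[(|\langle Y,\hat X\rangle|-1/\alpha)_+\big] \ge 0$. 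Now for each fixed unit vector $\hat x$, the positive part is nonzero only on the event in~\eqref{eqn:grid-prob-1} and is then at most $\max\|Y\|$, giving $\E_Y[W] \le \alpha p(\alpha)\max\|Y\|$; Markov on $W$ at threshold $1$ finishes. This is exactly the paper's argument, which phrases it as bounding $\E_{Y,X}\big[(\alpha|\langle Y,X\rangle| - \max\|X\|)_+\big]$ from above (via~\eqref{eqn:grid-prob-1} after Fubini) and from below (via Jensen for $(\cdot)_+$).

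For~\eqref{eqn:unbounded-inner-prob}, your dyadic plan is the right one and is what the paper does. One point to sharpen: a \emph{single} cutoff $R_0$ cannot work, because~\eqref{eqn:bounded-inner-prob} produces a threshold $2\max\|X\|$ rather than $\E\|X\|$, and there is no a priori relation between $R_0$ and $\E\|X\|$. The entire purpose of the dyadic levels $[\zeta 2^{j-1},\zeta 2^j)$ starting at $\zeta=\min_{X\neq0}\|X\|$ is that on each level the conditional maximum and mean of $\|X_j\|$ are within a factor $2$, so~\eqref{eqn:bounded-inner-prob} applied levelwise converts $\max\|\tilde X_j\|$ into $\E\|X_j\|$, after which the levels sum to $\E\|X\|$ and a union bound over the $O(\log(\cdot))$ levels yields the logarithm. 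The paper carries this out exactly, with a single second-moment tail bound for the top level and a final rescaling $\alpha\to 8\alpha$.
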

\begin{proof}
We first prove Eq.~\ref{eqn:bounded-inner-prob} by contradiction. Assume the contrary of Eq.~\ref{eqn:bounded-inner-prob}, we have
\begin{align*}
\underset{Y,X}{\E}[\max\{\alpha|\langle Y,X\rangle|-\max\|X\|,0\}]
&\geq\underset{Y}{\E}\big[\max\{\underset{X}{\E}[\alpha|\langle Y,\E[X]\rangle|-\max\|X\|],0\}\big]\\
&=\underset{Y}{\E}[\max\{\alpha|\langle Y,\E[X]\rangle|-\max\|X\|,0\}]\\
&\geq\underset{Y}{\P}[\alpha\E|\langle Y,X\rangle|\geq 2\max\|X\|]\cdot\max\|X\|\\
&> \alpha p(\alpha)\max\|Y\|\max\|X\|.
\end{align*}
However, by Eq.~\ref{eqn:grid-prob-1}, we have
\begin{align*}
\underset{Y, X}{\E}[\max\{\alpha|\langle Y,X\rangle|-\max \|X\|,0\}]
&\leq\underset{Y,X}{\E}[\max\{\alpha|\langle Y,X\rangle|-\|X\|,0\}]\\
&\leq\E_{X}\P_{Y}[\alpha|\langle Y,X\rangle|\geq \|X\|]\cdot \alpha|\langle Y,X\rangle|\\
&\leq\alpha p(\alpha)\max\|Y\|\max\|X\|,
\end{align*}
contradiction. 

Next, we prove Eq.~\ref{eqn:unbounded-inner-prob} by applying Eq.~\ref{eqn:bounded-inner-prob}. Denote $\zeta\coloneqq \min_{X\neq 0}\|X\|$. For any $\xi>0$, let \( k = \left\lceil \log\left(\frac{\sqrt{\E[\|X\|^2]}}{\zeta\sqrt{\xi}}\right)\right\rceil \) and define \( a_j \coloneqq \zeta 2^{j} \) for each \( j \in [k] \). We then define
\begin{align*}
X_j\coloneqq X\cdot\mathbb{I}\{a_{j-1}\leq\|X\|<a_j\},
\end{align*}
and
\begin{align*}
X_{k+1}\coloneqq X\cdot\mathbb{I}\{\|X\|\geq a_k\}
\end{align*}
which leads to
\begin{align*}
\E\|X\|=\sum_{j=1}^{k+1}\E\|X_j\|\geq \sum_{j=1}^{k}\E\|X_j\|
\end{align*}
and
\begin{align}\label{eqn:inner-decomposition}
\alpha\E|\langle Y,X\rangle|
=\alpha\sum_{j=1}^{k+1}\E|\langle Y,X_j\rangle|
\leq\alpha\sum_{j=1}^{k}\E|\langle Y,X_j\rangle|+\alpha\max\|Y\|\sqrt{\xi\E[\|X\|^2]}
\end{align}
given that
\begin{align*}
\E|\langle Y,X_{k+1}\rangle|&\leq\max\|Y\|\cdot\E\|X_{k+1}\|\\
&\leq\max\|Y\|\cdot\frac{\E[X_{k+1}^2]}{a_k}\leq\max\|Y\|\cdot\frac{\E[\|X\|^2]}{a_k}\leq\max\|Y\|\sqrt{\xi\E[\|X\|^2]}.
\end{align*}
For any $j=1,\ldots,k$, we define a new random variable $\widetilde{X}_j$ that satisfies
\begin{align*}
X_j=\begin{cases}
\widetilde{X}_j,&\text{ w.p. }\P[X_j\neq 0],\\
0,&\text{ w.p. }\P[X_j= 0].
\end{cases}
\end{align*}
Then,
\begin{align*}
\E\|X_j\|=\P[X_j\neq 0]\cdot\E\|\widetilde{X}_j\|\geq\frac{\P[X_j\neq 0]\cdot\max\|\widetilde{X}_j\|}{2},\qquad\E|\langle Y,X_j\rangle|=\P[X_j\neq 0]\cdot\E|\langle Y,\widetilde{X}_j\rangle|.
\end{align*}
By Eq.~\ref{eqn:bounded-inner-prob}, we have
\begin{align*}
\underset{Y}{\P}[\alpha\E|\langle Y,\widetilde{X}_j\rangle|\geq 2\max\|\widetilde{X}_j\|]
&\leq \alpha p(\alpha)\max\|Y\|,
\end{align*}
which leads to
\begin{align*}
\underset{Y}{\P}[\alpha\E|\langle Y,X_j\rangle|\geq 4\E\|X_j\|]\leq\alpha p(\alpha)\max\|Y\|
\end{align*}
and
\begin{align*}
\underset{Y}{\P}\left[\alpha\sum_{j=1}^k\E|\langle Y,X_j\rangle|\geq 4\sum_{j=1}^k\E\|X_j\|\right]\leq k\alpha p(\alpha)\max\|Y\|
\end{align*}
by union bound. Combining Eq.~\ref{eqn:inner-decomposition}, we can conclude that
\begin{align*}
&\underset{Y}{\P}\Big[\alpha\E|\langle Y,X\rangle|> 4\E\|X\|+\alpha\max\|Y\|\sqrt{\xi\E[\|X\|^2]}\Big]\\
&\qquad\qquad\leq \alpha p(\alpha)\max\|Y\|\log\left(\frac{\E[\|X\|^2]}{\zeta^2\xi}\right).
\end{align*}
Set 
\begin{align*}
\xi=\frac{4(\E\|X\|)^2}{\alpha^2(\max\|Y\|)^2\E[\|X\|^2]},
\end{align*}
we obtain
\begin{align*}
\underset{Y}{\P}\big[\alpha\E|\langle Y,X\rangle|> 8\E\|X\|\big]
\leq\alpha p(\alpha)\max\|Y\|\log\left(\frac{\alpha\max\|Y\|\sqrt{\E[\|X\|^2]}}{\min_{X\neq 0}\|X\|}\right).
\end{align*}
Since the above inequality holds for any $\alpha\geq0$, we can rescale $\alpha$ by a factor of 8 and conclude that
\begin{align*}
\underset{Y}{\P}\big[\alpha\E|\langle Y,X\rangle|> \E\|X\|\big]
\leq 8\alpha p(8\alpha)\max\|Y\|\log\left(\frac{\alpha\max\|Y\|\sqrt{\E[\|X\|^2]}}{\min_{X\neq 0}\|X\|}\right).
\end{align*}
\end{proof}

\begin{corollary}\label{cor:expected-inner-prod-bound-u}
Let $\alpha>0$. For any random variable $X\in\R^d$ with $\|X\|\leq 1$ and $\min_{X\neq 0}\|X\|\geq\epsilon$, we have
\begin{align*}
\underset{g\sim G_m^d}{\P}\big[\alpha\E|\langle g,X\rangle|> \max\|X\|\big]\leq 4\alpha\sqrt{d}e^{-1/(2\alpha^2)}
\end{align*}
and
\begin{align*}
\underset{g\sim G_m^d}{\P}\big[\alpha\E|\langle g,X\rangle|> \E\|X\|\big]\leq 16\alpha\sqrt{d}e^{-1/(32\alpha^2)}\log\big(\alpha\sqrt{d}/\epsilon\big).
\end{align*}
\end{corollary}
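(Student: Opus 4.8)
The plan is to derive both inequalities from Lemma~\ref{lem:expected-vector-inner-product} by instantiating $Y \sim G_m^d$ and supplying the base probability bound $p(\cdot)$ guaranteed by Lemma~\ref{lem:fixed-vector-inner-product}. First I would record the two quantitative inputs: (i) by Lemma~\ref{lem:fixed-vector-inner-product}, $Y \sim G_m^d$ satisfies $\P[\alpha|\langle Y, x\rangle| \ge \|x\|] \le 2e^{-2/\alpha^2}$ for every fixed $x$, so we may take $p(\alpha) = 2e^{-2/\alpha^2}$ in the hypothesis \eqref{eqn:grid-prob-1}; and (ii) every grid point $g \in G_m^d$ has $\|g\| \le \sqrt{d}/2 \le \sqrt{d}$, since each coordinate lies in $(-1/2, 1/2)$, so $\max\|Y\| \le \sqrt{d}$.

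For the first inequality, I would apply \eqref{eqn:bounded-inner-prob} of Lemma~\ref{lem:expected-vector-inner-product} with this choice of $p$, but note that the stated bound in \eqref{eqn:bounded-inner-prob} has a $2\max\|X\|$ threshold whereas the corollary asks for a $\max\|X\|$ threshold; this is handled simply by first rescaling $\alpha \mapsto \alpha/2$ inside \eqref{eqn:bounded-inner-prob} (the inequality holds for all $\alpha > 0$). After rescaling, plugging in $\max\|Y\| \le \sqrt{d}$ and $p(\alpha/2) = 2e^{-8/\alpha^2} \le 2 e^{-1/(2\alpha^2)}$ (or tracking constants more carefully) yields $\P[\alpha \E|\langle g, X\rangle| > \max\|X\|] \le \alpha \cdot p(\alpha/2) \cdot \sqrt{d} \le 4\alpha\sqrt{d}\, e^{-1/(2\alpha^2)}$, as claimed. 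Here the boundedness $\|X\| \le 1$ is used only to guarantee $\max\|X\|$ is finite and the quantities are well-defined.

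For the second inequality, I would invoke \eqref{eqn:unbounded-inner-prob} of Lemma~\ref{lem:expected-vector-inner-product}, which gives
\[
\P\big[\alpha \E|\langle Y, X\rangle| > \E\|X\|\big] \le 8\alpha\, p(8\alpha)\, \max\|Y\| \log\!\Big(\tfrac{\alpha \max\|Y\| \sqrt{\E\|X\|^2}}{\min_{X\neq 0}\|X\|}\Big).
\]
Substituting $\max\|Y\| \le \sqrt{d}$, $p(8\alpha) = 2e^{-2/(64\alpha^2)} = 2e^{-1/(32\alpha^2)}$, the bound $\sqrt{\E\|X\|^2} \le \|X\|_\infty \le 1$ from $\|X\|\le 1$, and $\min_{X\neq 0}\|X\| \ge \epsilon$, gives $16\alpha\sqrt{d}\, e^{-1/(32\alpha^2)} \log(\alpha\sqrt{d}/\epsilon)$, matching the statement. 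The only mild subtlety — and the part I'd be most careful about — is bookkeeping the constants and the exponent rescalings so that the factors of $2$, $4$, $8$, $16$ and the coefficients $1/2$, $1/32$ in the exponents come out exactly as stated; the argument structure itself is a direct instantiation and presents no real obstacle.
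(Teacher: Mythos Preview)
Your approach is exactly the paper's: instantiate Lemma~\ref{lem:expected-vector-inner-product} with $Y\sim G_m^d$, take $p(\alpha)=2e^{-2/\alpha^2}$ from Lemma~\ref{lem:fixed-vector-inner-product}, and use $\max\|Y\|\le\sqrt{d}$. The second inequality is handled correctly.

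For the first inequality, however, your rescaling goes the wrong way. Substituting $\alpha\mapsto\alpha/2$ into \eqref{eqn:bounded-inner-prob} bounds $\P[(\alpha/2)\E|\langle g,X\rangle|\ge 2\max\|X\|]=\P[\alpha\E|\langle g,X\rangle|\ge 4\max\|X\|]$, which is a \emph{smaller} event than the one you want, so the inequality does not transfer. The correct move is $\alpha\mapsto 2\alpha$: then \eqref{eqn:bounded-inner-prob} reads
\[
\P\big[\alpha\E|\langle g,X\rangle|\ge \max\|X\|\big]\le 2\alpha\cdot p(2\alpha)\cdot\sqrt{d}=2\alpha\cdot 2e^{-2/(2\alpha)^2}\cdot\sqrt{d}=4\alpha\sqrt{d}\,e^{-1/(2\alpha^2)},
\]
matching the statement exactly. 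This is precisely the bookkeeping you flagged; once the rescaling direction is fixed, the argument is complete.
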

\begin{proof}
The results are obtained by combining Lemma~\ref{lem:fixed-vector-inner-product} and Lemma~\ref{lem:expected-vector-inner-product}.
\end{proof}
The following lemma establishes that the sum of independent zero-mean sub-Gaussian random variables also follows a sub-Gaussian distribution.
\begin{lemma}[Proposition 2.6.1 of \cite{vershynin2018high}]\label{lem:subgaussian-sum}
For any $k$ independent zero-mean sub-Gaussian random variables $Y_1,\ldots,Y_k$ with variances $\sigma_1^2,\ldots,\sigma_k^2$, their sum $\sum_{j}^kY_j$ is also sub-Gaussian with variance $8\sum_{j}^k\sigma_j^2$.
\end{lemma}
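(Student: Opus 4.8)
\textbf{Proof proposal for Lemma~\ref{lem:subgaussian-sum}.}

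The plan is to cite this result directly from \cite[Proposition 2.6.1]{vershynin2018high}, since it is a standard fact about sums of independent sub-Gaussian random variables. However, if one wished to prove it from scratch, the approach would be via the moment generating function (MGF) characterization of sub-Gaussianity. First, I would recall the equivalence (up to absolute constants) between the tail-bound definition of a $\sigma$-sub-Gaussian random variable $Y$ and the MGF bound $\E[e^{\lambda Y}] \le e^{c \lambda^2 \sigma^2}$ for all $\lambda \in \R$, where $c$ is an absolute constant; this is \cite[Proposition 2.5.2]{vershynin2018high}. The key step is then multiplicativity: for independent zero-mean $Y_1, \dots, Y_k$,
\begin{align*}
\E\Bigl[\exp\Bigl(\lambda \sum_{j=1}^k Y_j\Bigr)\Bigr] = \prod_{j=1}^k \E[e^{\lambda Y_j}] \le \prod_{j=1}^k e^{c \lambda^2 \sigma_j^2} = \exp\Bigl(c \lambda^2 \sum_{j=1}^k \sigma_j^2\Bigr),
\end{align*}
which shows $\sum_j Y_j$ has sub-Gaussian MGF with variance proxy proportional to $\sum_j \sigma_j^2$. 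Converting back to a tail bound (again via the equivalence of definitions) then yields that $\sum_j Y_j$ is sub-Gaussian with variance $O\bigl(\sum_j \sigma_j^2\bigr)$, and tracking the absolute constants through these conversions gives the stated factor of $8$.

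The main (minor) obstacle is bookkeeping of absolute constants: the various equivalent formulations of sub-Gaussianity (tail bound, MGF bound, Orlicz norm, moment growth) each differ by absolute constants, so one must be careful that the definition used in the statement matches the one under which multiplicativity is cleanest, and that the round-trip conversion does not lose more than the claimed factor. Since the paper only ever uses this lemma up to constants (e.g., in Lemma~\ref{lem:isotropic-mean-tail}), the precise constant $8$ is not essential, and the cleanest path is simply to invoke \cite[Proposition 2.6.1]{vershynin2018high} as the proof does.
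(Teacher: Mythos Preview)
Your proposal is correct and matches the paper's approach: the paper does not give its own proof of this lemma but simply states it as a citation of \cite[Proposition 2.6.1]{vershynin2018high}, exactly as you suggest. Your MGF sketch is the standard argument behind that proposition and is fine as supplementary explanation, though the paper omits even that.
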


\begin{lemma}\label{lem:coordinate-estimate-sum}
For any $\epsilon,\delta>0$, $u\in\R^k$, and $k$ independent random variables $Y_1,\ldots, Y_k$ satisfying
\begin{align}\label{eqn:coordinate-bound}
\P\big[\big|Y_j-\E[Y_j]\big|\geq \epsilon\big]\leq \delta,\quad\forall j\in[k],
\end{align}
we have
\begin{align}
\P\Big[\Big|\sum_{j=1}^ku_jY_j-\sum_{j=1}^ku_j\E[Y_j]\Big|\geq C\|u\|\epsilon\log(1/\delta)\Big]\leq 2\delta k.
\end{align}
\end{lemma}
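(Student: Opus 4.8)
\textbf{Proof proposal for Lemma~\ref{lem:coordinate-estimate-sum}.}
The plan is to decompose each $Y_j$ into a ``good'' part that is bounded by $\epsilon$ around its mean and a ``bad'' part that handles the rare event $\{|Y_j-\E[Y_j]|\ge\epsilon\}$, then apply a sub-Gaussian concentration bound (via Hoeffding's inequality, cf.\ \cite[Proposition 2.5]{wainwright2019statstextbook}) to the weighted sum of the good parts. Concretely, for each $j\in[k]$ define the truncated variable $\widetilde{Y}_j\coloneqq\mathrm{clip}(Y_j,\E[Y_j]-\epsilon,\E[Y_j]+\epsilon)$, so that $|\widetilde{Y}_j-\E[Y_j]|\le\epsilon$ always and $\widetilde{Y}_j=Y_j$ whenever the good event holds. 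First I would union bound over $j\in[k]$ to conclude that, with probability at least $1-\delta k$, we have $Y_j=\widetilde{Y}_j$ for all $j$, hence $\sum_j u_jY_j=\sum_j u_j\widetilde{Y}_j$ on that event.

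Next I would center the truncated variables: let $\mu_j\coloneqq\E[\widetilde{Y}_j]$ and note $|\mu_j-\E[Y_j]|\le\E|\widetilde{Y}_j-Y_j|\le 2\epsilon\,\P[|Y_j-\E[Y_j]|\ge\epsilon]\le 2\epsilon\delta$, so the displacement of the mean is negligible (this is where I would absorb a constant factor). The recentered variables $\widetilde{Y}_j-\mu_j$ are independent, zero-mean, and bounded in an interval of length at most $2\epsilon$, so each is $\epsilon$-sub-Gaussian up to absolute constants; by Lemma~\ref{lem:subgaussian-sum} (or directly by Hoeffding) the weighted sum $\sum_j u_j(\widetilde{Y}_j-\mu_j)$ is sub-Gaussian with variance parameter $O(\|u\|^2\epsilon^2)$, giving
\begin{align*}
\P\Big[\Big|\sum_{j=1}^k u_j\widetilde{Y}_j-\sum_{j=1}^k u_j\mu_j\Big|\ge C'\|u\|\epsilon\log(1/\delta)\Big]\le 2\exp\big(-c\log(1/\delta)^2\big)\le\delta
\end{align*}
for suitable absolute constants $C',c$, where I would use $\|u\|_1\le\sqrt{k}\,\|u\|$ only if needed (it is not, since sub-Gaussian tails scale with $\|u\|_2$). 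Finally, accounting for the $|\sum_j u_j(\mu_j-\E[Y_j])|\le 2\epsilon\delta\|u\|_1\le 2\epsilon\delta\sqrt{k}\|u\|$ bias term (which is dominated by $\|u\|\epsilon$ for $\delta\le 1/2$) and combining with the $\delta k$ failure probability of the truncation step via a union bound, I obtain the claimed bound $\P[\,|\sum_j u_jY_j-\sum_j u_j\E[Y_j]|\ge C\|u\|\epsilon\log(1/\delta)\,]\le 2\delta k$ after adjusting $C$.

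The main obstacle is bookkeeping the constants and the two sources of error cleanly: the truncation contributes a $\delta k$ failure probability plus a small deterministic bias shift, while the sub-Gaussian tail contributes the $\exp(-c\log^2(1/\delta))$ term, and one must verify that $\exp(-c\log^2(1/\delta))\le\delta$ (true once the constant inside the $\log(1/\delta)$ factor in the statement is large enough) and that $2\delta k + \delta \le 2\delta k$ after possibly enlarging $C$ — i.e., that the sub-Gaussian failure probability can be folded into the $2\delta k$ slack. None of these steps is deep; the only mild subtlety is ensuring the clipping preserves independence (it does, since each $\widetilde{Y}_j$ is a deterministic function of $Y_j$ alone) and that the boundedness of $\widetilde{Y}_j-\mu_j$ in an interval of width $\le 2\epsilon$ is what feeds Hoeffding, for which the exact value of $\mu_j$ is irrelevant.
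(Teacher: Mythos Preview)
Your overall strategy---truncate each $Y_j$ to within $\epsilon$ of its mean, union-bound the truncation failures, then apply Hoeffding/sub-Gaussian concentration to the bounded pieces---is exactly the paper's approach. But there is a genuine gap in your handling of the mean shift. You write
\[
|\mu_j-\E[Y_j]|\le\E\bigl|\widetilde{Y}_j-Y_j\bigr|\le 2\epsilon\,\P\bigl[|Y_j-\E[Y_j]|\ge\epsilon\bigr]\le 2\epsilon\delta,
\]
and the middle inequality is false: on the bad event $\{|Y_j-\E[Y_j]|\ge\epsilon\}$ the clipped difference satisfies $|\widetilde{Y}_j-Y_j|=|Y_j-\E[Y_j]|-\epsilon$, which is not bounded by $2\epsilon$ (or by anything, under the stated hypotheses). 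Without an additional tail or moment assumption on $Y_j$ you cannot control $\E|\widetilde{Y}_j-Y_j|$ at all.

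This is not merely a bookkeeping slip; the lemma as written is in fact false. Take $Y_1,\dots,Y_k$ i.i.d.\ with $Y_j=0$ with probability $1-\delta$ and $Y_j=\epsilon/(2\delta)$ with probability $\delta$, so that $\E[Y_j]=\epsilon/2$ and the hypothesis $\P[|Y_j-\E[Y_j]|\ge\epsilon]\le\delta$ holds. With $u_j=1/\sqrt{k}$, on the event $\{Y_j=0\ \forall j\}$ (probability $(1-\delta)^k$) the centered sum equals $-\epsilon\sqrt{k}/2$, which exceeds $C\epsilon\log(1/\delta)$ once $k\gtrsim C^2\log^2(1/\delta)$; for small $\delta$ this event has probability close to $1$ while $2\delta k\to 0$. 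The paper's own proof glosses over the same point---it invokes Lemma~\ref{lem:subgaussian-sum}, which requires zero-mean summands, on the truncated variables without checking that truncation preserves the mean. In the paper's sole application (Proposition~\ref{prop:bounded-product-distribution}) the $Y_j$ are bounded phase-estimation outputs, so the intended statement presumably carries an implicit hypothesis like $|Y_j-\E[Y_j]|\le B$ almost surely; with that, your bias estimate becomes $|\mu_j-\E[Y_j]|\le (B+\epsilon)\delta$, and the argument can be completed provided $(B+\epsilon)\delta\sqrt{k}\lesssim\epsilon\log(1/\delta)$.
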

\begin{proof}
Denote $Z_j\coloneqq Y_j-\E[Y_j]$. By Eq.~\ref{eqn:coordinate-bound}, each random variable $Z_j$ follows a probability distribution $p_j$ that is $\delta$-close to a probability distribution $\tilde{p}_j$ such that $\max_{Z_j\sim\tilde{p}_j}\|Z_j\|\leq2\epsilon\log(1/\delta)$. Then, the random variable $Z_j\sim\tilde{p}_j$ follows a sub-Gaussian distribution with variance at most $\epsilon^2$, and the random variable
\begin{align}
\sum_{j=1}^ku_jZ_j,\quad \{Z_1,\ldots,Z_k\}\sim\tilde{p}_1\otimes\cdots\otimes\tilde{p}_k,
\end{align}
is also a sub-Gaussian distribution with variance at most 
\begin{align}
C^2\sum_{j=1}^ku_j^2\underset{Z_j\sim\tilde{p}_j}\E\|Z_j\|^2\leq C^2\|u\|^2\epsilon^2,
\end{align}
where $C$ is the absolute constant in Lemma~\ref{lem:subgaussian-sum}, which leads to
\begin{align}
\underset{\tilde{p}_1\otimes\cdots\otimes\tilde{p}_k}{\P}\Big[\Big|\sum_{j=1}^ku_jZ_j\Big|\geq C\|u\|\epsilon\log(1/\delta)\Big]\leq \delta.
\end{align}
Counting in the difference between $\tilde{p}_j$ and the actual distribution $p_j$, we have
\begin{align}
\underset{p_1\otimes\cdots\otimes p_k}{\P}\Big[\Big|\sum_{j=1}^ku_jZ_j\Big|\geq 8\|u\|\epsilon\log(1/\delta)\Big]\leq \delta+\delta k=2\delta k.
\end{align}
\end{proof}

\end{document}